\newcommand{\mytextstyle}{}                   
\DeclarePairedDelimiter\floor{\lfloor}{\rfloor}
\newtheorem{theorem}{Theorem}
\newtheorem{lemma}{Lemma}
\newtheorem{corollary}{Corollary}
\newtheorem{remark}{Remark}
\newcommand{\range}{\mathrm{range}}
\newcommand{\Comp}{\mathrm{Comp}}
\newcommand{\Comm}{\mathrm{Comm}}
\newcommand{\Span}{\mathrm{Span}}
\newcommand{\Sp}{\mathrm{Sp}}
\newcommand{\bg}{\mathrm{D}}
\newcommand{\R}{\mathbb{R}}
\def\<#1,#2>{\langle #1,#2\rangle}
\newcommand{\Dotprod}[1]{\left\langle#1\right\rangle}
\newcommand{\norm}[1]{\|#1\|}
\newcommand{\sqn}[1]{\norm{#1}^2}
\newcommand{\Norm}[1]{\left\|#1\right\|}
\newcommand{\sqN}[1]{\Norm{#1}^2}
\newcommand{\vect}[1]{\begin{bmatrix*}[l]#1\end{bmatrix*}}
\newcommand{\cG}{\mathcal{G}}
\newcommand{\cV}{\mathcal{V}}
\newcommand{\cE}{\mathcal{E}}
\newcommand{\sX}{{\mathsf E}}
\newcommand{\cO}{\mathcal{O}}
\newcommand{\mW}{\mathbf{W}}
\newcommand{\mI}{\mathbf{I}}
\newcommand{\mP}{\mathbf{P}}
\newcommand{\mM}{\mathbf{M}}
\newcommand{\mWp}{\mathbf{W}^{\dagger}}
\newcommand{\sqnw}[1]{\sqn{#1}_{\mWp}}
\newcommand{\eqdef}{\coloneqq}
\DeclareMathOperator*{\argmin}{arg\,min}
\title{\bf Optimal and Practical Algorithms for Smooth and Strongly Convex Decentralized Optimization}
\author{Dmitry Kovalev \qquad Adil Salim \qquad  Peter Richt\'{a}rik   \\ \phantom{xx} \\
	King Abdullah University of Science and Technology\\ Thuwal, Saudi Arabia
}
\begin{document}
	
	\maketitle

	\begin{abstract}
We consider the task of decentralized minimization of the sum of smooth strongly convex functions  stored across the nodes of a network. For this problem, lower bounds on the number of gradient computations and the number of communication rounds required to achieve $\varepsilon$ accuracy have recently been proven. We propose two new algorithms for this decentralized optimization problem and equip them with complexity guarantees. We show that our first method is optimal both in terms of the number of communication rounds and in terms of the number of gradient computations. Unlike existing optimal algorithms, our algorithm does not rely on the expensive evaluation of dual gradients. Our second algorithm is optimal in terms of the number of communication rounds, without a logarithmic factor. Our approach relies on viewing the two proposed algorithms as accelerated variants of the Forward Backward algorithm to solve monotone inclusions associated with the decentralized optimization problem. We also verify the efficacy of our methods against state-of-the-art algorithms through numerical experiments.
	\end{abstract}

	\section{Introduction}

In this paper we are concerned with the design and analysis of new efficient algorithms for solving optimization problems in a {\em decentralized} storage and computation regime. In this regime, a network of agents/devices/workers, such as mobile devices, hospitals, wireless sensors, or smart home appliances, collaborates to solve a single optimization problem whose description is stored across the nodes of the network. Each node can perform computations using its local state and data, and is only allowed to communicate with its neighbors.

Problems of this form have been traditionally studied in the signal processing community~\citep{xu2020distributed}, but are attracting increasing interest from the machine learning and optimization community as well \citep{scaman2017optimal}. Indeed, the training of supervised machine learning models via empirical risk minimization from training data stored across a network is most naturally cast as a  decentralized optimization problem. Finally, while current federated learning  \citep{FEDLEARN, FL2017-AISTATS} systems rely on a star network topology, with a trusted server performing aggregation and coordination placed at the center of the network, advances in decentralized optimization could be useful in new generation federated learning formulations that would rely on fully decentralized computation  \citep{FL_survey_2019}. In summary, decentralized optimization is of direct relevance to machine learning, present and future.

\subsection{Formalism}

Formally, given an undirected connected  network $\cG = (\cV, \cE)$ with nodes/vertices $\cV = \{1,\ldots,n\}$ and edges $\cE \subset \cV \times \cV$, we consider optimization  problems of the form
	\begin{equation}\label{eq:1}
	\min\limits_{x \in  \R^d} \sum_{i \in \cV} f_i(x),
	\end{equation}
	where the data describing functions $f_i:\R^d\to \R$ is stored on node $i$ and not directly available to any other node. Decentralized algorithms for solving this problem need to respect the network structure of the problem, which is to say that computation can only be made on the nodes $i\in \cV$ from data and information available on the nodes, and communication is constrained to only happen along the edges $e\in \cE$. 	

\subsection{Computation and communication}	Several decentralized gradient-type algorithms have been proposed to solve \eqref{eq:1} in the  smooth and strongly convex regime. Two key efficiency measures used to compare such methods are: i) the {\em number of gradient evaluations} (where one gradient evaluation refers to computing $\nabla f_i(x_i)$ for all $i\in \cV$ for some input vectors $x_i$), and ii) the {\em number of communication rounds}, where one  round allows each node to send $\cO(1)$ vectors of size $d$ to their neighbors. If computation is costly, the first comparison metric is more important, and if communication is costly, the second is more important.

Note that problem \eqref{eq:1} poses certain intrinsic difficulties each method designed for it needs to address. Clearly, more information can be communicated in each communication round  if the network $\cG$ is ``more highly'' connected. By $\chi$ we  denote the {\em condition number} associated with (the connectivity of) the graph $\cG$; a formal definition is given later. Likewise, more computation will be needed if the functions $f_i$ are ``more complicated''. We will entirely focus on problems where all functions $f_i$ are  $L$-smooth and $\mu$-strongly convex, which naturally leads to the quantity $\kappa \eqdef L/\mu$ as a condition number associated with computation.

	Much of decentralized optimization research is focused on designing decentralized algorithms with computation and communication guarantees which have as good as possible dependence on  the intrinsic properties of the problem, i.e., on the condition numbers $\kappa$ and $\chi$.

\section{Related Work and Contributions}

In this section we first briefly review some of the key results on decentralized	optimization, and subsequently provide a brief summary of our key contributions.

\subsection{Related work}	
	Existing gradient-type decentralized methods for solving  problem \eqref{eq:1} can be informally classified into three classes: {\em non-accelerated} algorithms, {\em accelerated} algorithm and {\em optimal} algorithms.  

\paragraph{Non-accelerated methods.}  Loosely speaking,  a method is non-accelerated if it has at least a linear dependence on the condition numbers $\kappa$ and $\chi$, i.e., $\cO(\kappa)$ and $\cO(\chi)$. Please refer to \citep[Table~1]{xu2020distributed} for a summary of many such methods, see also~\citep{alghunaim2019decentralized,li2020revisiting}.  \citet{xu2020distributed} provide a tight unified analysis of many of these nonaccelerated algorithms, and relies on similar tools as those used in this paper, such as operator splitting and Chebyshev acceleration. 

\paragraph{Accelerated methods.} Accelerated methods have an improved (sublinear) dependence on the condition numbers, typically $\cO(\sqrt{\kappa})$ and $\cO(\sqrt{\chi})$. Accelerated algorithms include accelerated DNGD of \citet{qu2019accelerated} and accelerated EXTRA of \citet{li2020revisiting}; the latter using the  Catalyst~\citep{lin2017catalyst} framework to accelerate EXTRA~\citep{shi2015extra}. Additional accelerated methods include, the Accelerated Penalty Method of~\citet{li2018sharp,dvinskikh2019decentralized}, SSDA and MSDA of \citet{scaman2017optimal} and Accelerated Dual Ascent of~\citet{uribe2020dual}.
		\paragraph{Optimal algorithms.} \citet{scaman2017optimal} provide {\em lower bounds} for  the gradient computation and communication complexities of finding an $\varepsilon$-accurate solution; see Section~\ref{sec:lower-bound} below. 
		There have been several attempts to match these lower bounds, which include algorithms summarized in Table~\ref{tab:relat-works}.
	 Note, that  gradient computation complexity is left as N/A 	for SSDA and MSDA. This is because they rely on the computation of the gradient of the Fenchel conjugate of $f_i$, called {\em dual gradients} in the sequel, which can be intractable. Indeed, computing a dual gradient can be as hard as minimizing $f_i$.
		Finally, we remark that \citet{scaman2018optimal} provide lower bounds in the   nonsmooth regime as well, and an algorithm matching this lower bound is called MSPD. MSPD is primal dual~\cite{chambolle2011first}, similarly to the algorithms developed in this paper.
		
		\renewcommand{\arraystretch}{2}
		\begin{table}[H]
			\caption{ Comparison of existing state of the art decentralized algorithms with our results in terms of  gradient computation and communication complexity of finding  $x$ such that $\|x-x^*\|^2 \leq \varepsilon$, where $x^*$ is a solution to Problem~\eqref{eq:1}\\}
			\label{tab:relat-works}
			\centering
			\begin{tabular}{|c|c|c|}
				\hline
				\bf Algorithm & \makecell{\bf Gradient computation \\ \bf complexity} & \makecell{\bf Communication\\ \bf complexity}\\
				\hline
				\multicolumn{3}{|c|}{\bf Existing State of the art Decentralized Algorithms}\\
				\hline
				\makecell{Accelerated Dual Ascent\\ \citet{uribe2020dual}}  & $\cO\left(\kappa \sqrt{\chi} \log^2\frac{1}{\varepsilon}\right)$ & $\cO\left(\sqrt{\kappa\chi } \log\frac{1}{\varepsilon}\right)$\\
				\hline
				\makecell{Single/Multi Step Dual Ascent \\ \citet{scaman2017optimal}}&
				N/A&
				$\cO\left(\sqrt{\kappa\chi}\log\frac{1}{\varepsilon}\right)$\\
				\hline
				\makecell{Accelerated Penalty Method\\ \citet{li2018sharp,dvinskikh2019decentralized}}&
				$\cO\left(\sqrt{\kappa} \log\frac{1}{\varepsilon}\right)$&
				$\cO\left(\sqrt{\kappa \chi} \log^2\frac{1}{\varepsilon}\right)$\\
				\hline
				\makecell{Accelerated EXTRA\\\citet{li2020revisiting}}&
				$\cO\left(\sqrt{\kappa\chi}\log( \kappa\chi )\log\frac{1}{\varepsilon}\right)$&
				$\cO\left(\sqrt{\kappa\chi}\log( \kappa\chi )\log\frac{1}{\varepsilon}\right)$\\
				\hline
				\multicolumn{3}{|c|}{\bf Our Results}\\
				\hline 
				\makecell{Algorithm~\ref{alg:ALV}\\this paper, Theorem~\ref{th:ALV}}&
				$\cO\left(\left(\sqrt{\kappa\chi} + \chi\right)\log\frac{1}{\varepsilon}\right)$&
				$\cO\left(\left(\sqrt{\kappa\chi} + \chi\right)\log\frac{1}{\varepsilon}\right)$\\
				\hline
				\makecell{\textbf{Algorithm~\ref{alg:ALV-opt}}\\\textbf{this paper, Corollary~\ref{th:ALV-opt-cor}}}&
				$\cO\left(\sqrt{\kappa}\log\frac{1}{\varepsilon}\right)$&
				$\cO\left(\sqrt{\kappa\chi}\log\frac{1}{\varepsilon}\right)$\\
				\hline
				\makecell{Algorithm~\ref{alg:scary}\\this paper, Appendix}&
				$\cO\left(\sqrt{\kappa\chi}\log\frac{1}{\varepsilon}\right)$&
				$\cO\left(\sqrt{\kappa\chi}\log\frac{1}{\varepsilon}\right)$\\
				\hline
				\hline
				{\bf Lower bounds~\citet{scaman2017optimal}}&
				$\cO\left(\sqrt{\kappa}\log\frac{1}{\varepsilon}\right)$&
				$\cO\left(\sqrt{\kappa\chi}\log\frac{1}{\varepsilon}\right)$\\
				\hline
			\end{tabular}
		\end{table}

	\subsection{Summary of contributions}
	
The starting point of this paper is the realization that, {\em to the best of our knowledge, 	in the class of algorithms not relying on the computation of the dual gradients, there is no algorithm optimal in communication complexity, and as a result, no algorithm optimal in both gradient computation and communication complexity.} To remedy this situation, we do the following:

\begin{itemize}
\item We propose a new accelerated decentralized algorithm not relying on dual gradients: Accelerated Proximal Alternating Predictor-Corrector (APAPC) method (Algorithm~\ref{alg:ALV}).
We show that  in order to obtain $x$ for which $\sqn{x - x^*} \leq \varepsilon$,
where 	$x^*$ is the solution of \eqref{eq:1}, this method only needs  $$\cO((\sqrt{\kappa \chi} + \chi)\log(1/\varepsilon))$$ gradient computations and communication rounds (Theorem~\ref{th:ALV}). When combined with Chebyshev acceleration, similarly to the trick used in~\citep[Section 4.2]{scaman2017optimal}, we show that our method, which we then call
Optimal Proximal Alternating Predictor-Corrector (OPAPC) method
(Algorithm~\ref{alg:ALV-opt}),  leads to an {\em optimal} decentralized method both in terms of gradient computation and communication complexity (Corollary~\ref{th:ALV-opt-cor}). In particular, OPAPC finds an $\varepsilon$-solution in at most
$$\cO\left(\sqrt{\kappa}\log(1/\varepsilon)\right)$$
gradient computations and at most
$$\cO\left(\sqrt{\kappa \chi}\log(1/\varepsilon)\right)$$
communication rounds.
Algorithm~\ref{alg:ALV-opt} reaches  the lower bounds (Theorem~\ref{th:lower}), and hence it is indeed optimal.



\item  We also propose another accelerated algorithm (Algorithm~3) not relying on dual gradients, one that is optimal in communication complexity (this algorithm is presented in the appendix only). Compared to the above development, this algorithm has the added advantage that it requires the computation of a single gradient per communication step. This can have practical benefits  when communication is expensive.      

\end{itemize}


	\section{Background}

	\label{sec:background}

	
	\subsection{Basic formulation of the decentralized problem}
	Problem~\eqref{eq:1} can be reformulated as a lifted (from $\R^d$ to $\R^{dn}$) optimization problem via consensus constraints:
	\begin{equation}
	\label{eq:pb-equiv}
	\min\limits_{\substack{x_1,\ldots,x_n \in  \R^d\\x_1=\ldots=x_n}}  \sum_{i \in \cV} f_i(x_i).
	\end{equation}
	Consider the function $F : (\R^{d})^\cV \to \R$ defined by $F(x_1,\dots,x_n) = \sum_{i \in \cV} f_i(x_i)$, where $x_1,\dots,x_n \in \R^{d}$. Then, $F$ is $\mu$-strongly convex and $L$-smooth since the individual functions $f_i$ are. Consider also any linear operator (equivalently, any matrix) $\mW : (\R^{d})^\cV \to (\R^{d})^\cV$ such that $\mW(x_1,\dots,x_n) = 0$ if and only if $x_1=\ldots=x_n$. Denoting $x = (x_1,\ldots,x_n) \in (\R^{d})^\cV$, Problem~\eqref{eq:pb-equiv} is equivalent to 
	\begin{equation}
	\label{eq:2}
	\min\limits_{\substack{x \in \ker(\mW)}}  F(x).
	\end{equation}
	Many optimization algorithms converge exponentially fast (i.e., linearly) to a minimizer of Problem~\eqref{eq:2}, e.g. the projected gradient algorithm. However, only few of them are decentralized. A decentralized algorithm typically relies on multiplication by $\mW$, in cases where $\mW$ is a \textit{gossip matrix}. Consider a $n \times n$ matrix $\hat{\mW}$ satisfying the following properties: 1) $\hat{\mW}$ is symmetric and positive semi definite, 2) $\hat{\mW}_{i,j} \neq 0$ if and only if $i=j$ or $(i,j) \in \cE$, and 3) $\ker \hat{\mW} = \mathrm{span}(\mathbbm{1})$, where $\mathbbm{1} = (1,\ldots,1)^\top$. 	Such a matrix is called a {\em gossip matrix}. A typical example is the Laplacian of the graph $\cG$.	Denoting $\mI$ the $d \times d$ identity matrix and $\otimes$ the Kronecker product, consider $\mW : (\R^{d})^\cV \to (\R^{d})^\cV$ the $nd \times nd$ matrix defined by
$
	\mW \eqdef \hat{\mW}\otimes \mI.
$
	This matrix can be represented as a block matrix $\mW = (\mW_{i,j})_{(i,j)\in \cV^2}$, where each block $\mW_{i,j} = \hat{\mW}_{i,j} \mI$ is a $d \times d$ matrix proportional to $\mI$. In particular, if $d=1$, then $\mW = \hat{\mW}$. Moreover, $\mW$ satisfy similar properties to $\hat{\mW}$:

	
	\begin{enumerate}
		\item $\mW$ is symmetric and positive semi definite,
		\item $\mW_{i,j} \neq 0$ if and only if $i=j$ or $(i,j) \in \cE$,
	   	\item $\ker \mW$ is the consensus space, $\ker(\mW) = \{(x_1,\dots,x_n) \in (\R^{d})^\cV, x_1=\dots=x_n\}$, 
		\item $\lambda_{\max}(\mW) = \lambda_{\max}(\hat{\mW})$ and  $\lambda_{\min}^{+}(\mW) = \lambda_{\min}^{+}(\hat{\mW})$, where $\lambda_{\max}$ (resp. $\lambda_{\min}^{+}$) denotes the largest (resp. the smallest positive) eigenvalue.
	\end{enumerate}
	
		Throughout the paper, we denote $\mW^{\dagger} : \range(\mW) \to \range(\mW)$ the inverse of the map $\mW : \range(\mW) \to \range(\mW)$. The operator $\mW^{\dagger}$ is positive definite over $\range(\mW)$ and we denote $\sqn{y}_{\mW^{\dagger}} = \Dotprod{\mW^{\dagger}y,y}$ for every $y \in \range(\mW)$.	With a slight abuse of language, we shall say that $\mW$ is a gossip matrix. 
 Note that decentralized communication can be represented as a multiplication of $\mW$ by a vector $x \in (\R^d)^\cV$. Indeed, the $i^{\text{th}}$ component of $\mW x$ is a linear combination of $x_j$, where $j$ is a neighbor of $i$ (we shall write $j \sim i$). In other words, one matrix vector multiplication involving $\mW$ is equivalent to one communication round.

	\textit{In the rest of the paper, our goal is to solve the equivalent problem~\eqref{eq:2} with $\mW$ being a gossip matrix via an optimization algorithm which uses only evaluations of $\nabla F$ and multiplications by $\mW$.}


	\subsection{Lower bounds}
\label{sec:lower-bound}
Linearly converging decentralized algorithms using a gossip matrix $\mW$ often have a linear rate depending on the condition number of the $f_i$, $\kappa \eqdef \frac{L}{\mu}$ and the condition number (or spectral gap) of $\mW$, $\chi(\mW) \eqdef \frac{\lambda_{\max}(\mW)}{\lambda_{\min}^{+}(\mW)}$. Indeed, the spectral gap of the Laplacian matrix is known to be a measure of the connectivity of the graph.

In this paper, we define {\em the class of (first order) decentralized algorithms} as the subset of black box optimization procedure~\cite[Section 3.1]{scaman2017optimal} not using dual gradients, i.e. a decentralized algorithm is not allowed to compute $\nabla f_i^*$ (a formal definition is given in the Supplementary material). Complexity lower bounds for solving Problem~\eqref{eq:1} by a black-box optimization procedure are given by~\cite{scaman2017optimal}. These lower bounds relate the number of gradient computations (resp.\ number of communication rounds) to achieve $\varepsilon$ accuracy to the condition numbers $\kappa$ and $\chi(\mW)$. Since a decentralized algorithm is a black-box optimization procedure, these lower bounds apply to decentralized algorithms. Therefore, we obtain our first result as a direct application of~\cite[Corollary 2]{scaman2017optimal}.

\begin{theorem}[\cite{scaman2017optimal}]
\label{th:lower}
    Let $\chi \geq 1$. There exist a gossip matrix $\mW$ with condition number $\chi$, and a family of smooth strongly convex functions $(f_i)_{i \in \cV}$ with condition number $\kappa > 0$ such that the following holds: for any $\varepsilon >0$, any decentralized algorithm requires at least $\Omega\left(\sqrt{\kappa \chi}\log(1/\varepsilon)\right)$ communication rounds, and at least $\Omega\left(\sqrt{\kappa}\log(1/\varepsilon)\right)$ gradient computations to output $x = (x_1, \ldots, x_n)$ such that $\sqn{x - x^*} < \varepsilon$, where $x^* = \argmin F.$ 
\end{theorem}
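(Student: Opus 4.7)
The plan is to derive the theorem as a direct consequence of the lower bound machinery of Scaman et al.\ (Corollary~2 of their paper), so the task reduces to checking the hypotheses of their framework and then instantiating the hard example appropriately. First I would verify that the class of decentralized algorithms as defined in the preceding subsection is indeed a subclass of the black-box optimization procedures in \cite[Section~3.1]{scaman2017optimal}: at every step, each node $i$ either queries $\nabla f_i$ at a local point in the linear span of previously observed vectors, or performs a communication, modelled as multiplication by the gossip matrix $\mW$ so that the local state of $i$ is updated to a linear combination of the states of its neighbors $j \sim i$. The explicit exclusion of $\nabla f_i^\ast$ in the paper's definition ensures that only primal oracle calls are counted, which is exactly the primal black-box model of Scaman et al. Once that is done, their lower bounds apply verbatim.

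Next, to obtain the concrete $\Omega(\sqrt{\kappa}\log(1/\varepsilon))$ bound on gradient evaluations, I would invoke Nesterov's classical argument: pick a graph on which the hard quadratic of Nemirovski--Yudin type, with condition number $\kappa$, can be placed on (for example) a single node while the remaining $f_i$ are chosen to be trivial quadratics that do not decrease the global condition number. Any first-order method, and a fortiori any decentralized algorithm, must execute $\Omega(\sqrt{\kappa}\log(1/\varepsilon))$ queries to $\nabla f_i$ to shrink the iterate error below $\varepsilon$ in the norm on $(\R^d)^\cV$. This is standard and does not rely on the graph structure.

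For the $\Omega(\sqrt{\kappa \chi}\log(1/\varepsilon))$ communication lower bound I would use Scaman et al.'s graph construction. Fix a linear chain or, more generally, a family of graphs whose Laplacians attain the prescribed condition number $\chi$, and split a hard quadratic of condition number $\kappa$ into two pieces placed on the two endpoints of a diameter. Because a communication round can only transmit information along an edge, after $T$ rounds the state of each node depends only on $f_j$ for $j$ within graph distance $T$. Combining this locality with the standard Krylov subspace argument for the two-endpoint hard instance gives, after a careful tracking of how fast the spectral information of $\mW$ can be exploited, the bound $T \geq \Omega(\sqrt{\kappa \chi}\log(1/\varepsilon))$ for attaining $\sqn{x - x^\ast} < \varepsilon$.

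The main obstacle, and the only non-routine step, is the verification that the algorithmic class considered here fits Scaman et al.'s black-box template without loss of any constants, in particular that allowing simultaneous gradient and communication steps, node-wise memory, and arbitrary linear combinations of previously observed vectors still falls within the procedures for which the lower bound is proved; once that embedding is checked, the rest is a citation of \cite[Corollary~2]{scaman2017optimal} with the parameters $(\kappa, \chi)$ instantiated from the hard graph and the hard quadratic above.
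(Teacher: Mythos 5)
Your proof takes essentially the same route as the paper: the paper's entire argument is that the class of decentralized algorithms is contained in the class of black-box optimization procedures of \cite[Section~3.1]{scaman2017optimal} (since it merely removes the dual-gradient oracle), so the lower bounds of their Corollary~2 transfer verbatim --- exactly the content of your first and last paragraphs. Your middle paragraphs re-sketching the hard quadratic and the chain-graph locality argument are an unnecessary (and only heuristic) re-derivation of the cited result, but they do not affect the correctness of the argument.
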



Although the lower bounds of Theorem~\ref{th:lower} are obvious consequences of~\citep[Corollary~2]{scaman2017optimal}, their tightness is not. Indeed, the lower bounds of Theorem~\ref{th:lower} are tight on the class of black-box optimization procedures since they are achieved by MSDA~\cite{scaman2017optimal}. However, MSDA uses dual gradients and whether these lower bounds are tight on the class of decentralized algorithms is not known. In this paper, we propose decentralized algorithms achieving these lower bounds, showing in particular that they are tight.


\subsection{Operator splitting}
\label{sec:op}
Recall that in this paper, any optimization algorithm solving Problem~\eqref{eq:2} by using  evaluations of $\nabla F$ and multiplications by the gossip matrix $\mW$ only is a decentralized algorithm. Such algorithms can be obtained in several ways, e.g., by applying operator splitting methods to primal dual reformulations of Problem~\eqref{eq:2}, see~\cite{con19}. This is the approach we chose in this work.

	We now provide some minimal background knowledge on the Forward Backward algorithm involving monotone operators. We restrict ourselves to single valued, continuous monotone operators. For the general case of set valued monotone operators, the reader is referred to~\cite{bau-com-livre11}. 
	
	Let $\sX$ be an Euclidean space and denote $\Dotprod{\cdot,\cdot}_{\sX}, \|\cdot\|_{\sX}$ its inner product and the associated norm. Given $\nu \in \R$, a map $A : \sX \to \sX$ is {\em $\nu$-monotone} if for every $x,y \in \sX$, $$\Dotprod{A(x)-A(y),x-y}_{\sX} \geq \nu \|x-y\|_{\sX}^2.$$ If $\nu < 0$, $A$ is  {\em weakly monotone}, if $\nu > 0$, $A$ is {\em strongly monotone} and if $\nu = 0$ then $A$ is {\em monotone}. In this paper, a monotone operator is defined as a monotone continuous map. For every monotone operator and every $\gamma >0$, the map $I + \gamma A : \sX \to \sX$ is one-to-one and its inverse $J_{\gamma A} = (I + \gamma A)^{-1} : \sX \to \sX$, called {\em resolvent}, is well defined. Let $F$ be a {\em smooth} convex function, i.e., $F$ is differentiable and its gradient is Lipschitz continuous. Then $\nabla F$ is a monotone operator, and the resolvent $J_{\gamma \nabla F}$ is the proximity operator of $\gamma F$. However, there exist monotone operators which are not gradients of convex functions. For instance, a skew symmetric operator $S$ on $\sX$ defines the linear map $x \mapsto S x$ which is not a gradient. This map is a monotone operator since $\Dotprod{Sx,x}_{\sX} = 0$. The {\em set of zeros} of $A$, defined as $Z(A) \eqdef \{x \in \sX, A(x) = 0\}$, is often of interest in optimization. For instance, $Z(\nabla F) = \argmin F$.

	\paragraph{Forward Backward.} In order to find an element in $Z(A+B)$, where $B$ is another monotone operator, the {\em Forward Backward algorithm} iterates
	\begin{equation}
		\label{eq:FB}
		x^{k+1} = J_{B}(x^k - A(x^k)).
	\end{equation}      
	Note that if $A = \nabla F$ and $B = \nabla G$, where $G$ is another differentiable convex function, the Forward Backward algorithm boils down to the proximal gradient algorithm. In this particular case, Nesterov acceleration can be applied to~\eqref{eq:FB} and leads to faster convergence rates compared to the proximal gradient algorithm~\citep{nesterov1983method,beck2009fast}. 
	
	\paragraph{Generalized Forward Backward.} 
	For every positive definite operator $\mP$ on $\sX$, the algorithm 
	\begin{equation}
		\label{eq:FBP}
		x^{k+1} = J_{\mP^{-1} B}(x^k - \mP^{-1} A(x^k)),
	\end{equation}      
	called the {\em Generalized Forward Backward} method, can be seen as an instance of~\eqref{eq:FB} because $Z(\mP^{-1}A + \mP^{-1}B) = Z(A+B)$ and $\mP^{-1}A$, $\mP^{-1}B$ are monotone operators under the inner product induced by $\mP$ on $\sX$. For example, the gradient of $F$ under this inner product is $\mP^{-1} \nabla F$. A primal dual optimization algorithm is an algorithm solving a primal dual formulation of a minimization problem, see below. Many primal dual algorithms can be seen as instances of~\eqref{eq:FBP}, with general monotone operators $A,B$, for a well chosen parameter $\mP$, see~\citep{con19}.

\section{New Decentralized Algorithms}
\label{sec:algo}
\subsection{An accelerated primal dual algorithm}	\label{sec:ALV}

	Before presenting our  algorithm, we introduce an accelerated decentralized algorithm which we then use to motivate the development of our method.

	In this section, $\sX$ is the Euclidean space $\sX = (\R^{d})^{\cV} \times  \range(\mW)$ endowed with the norm $\sqn{(x,y)}_{\sX} \eqdef \sqn{x} + \sqn{y}_{\mW^{\dagger}}$.
	
	Using the first order optimality conditions, a point $x^*$ is a solution to Problem~\eqref{eq:2} if and only if $\nabla F(x^*) \in \range(\mW)$ and $x^* \in \ker(\mW)$. Solving Problem~\eqref{eq:2} is thus equivalent to finding $(x^*, y^*) \in \sX$ such that
	\begin{equation} 	\label{eq:primaldualoptimal}
	\begin{array}{r@{}l}
		0 &{}= \nabla F(x^*) + y^*,\\
		0 &{}= \mW x^*.
	\end{array}
	\end{equation}
	Indeed, the first line of \eqref{eq:primaldualoptimal} is equivalent to $\nabla F(x^*)  = - y^* \in \range \mW$, because $(x^*, y^*) \in \sX = (\R^{d})^{\cV} \times  \range(\mW)$.
		The second line of \eqref{eq:primaldualoptimal} is just a definition of $x^* \in \ker \mW$.
	Consider the maps $M,A,B : \sX \to \sX$ defined by
	\begin{equation}
	M(x,y) \eqdef \begin{bmatrix} \nabla F(x) + y \\ -\mW x  \end{bmatrix}, \quad A(x,y) \eqdef \begin{bmatrix*}[c]
		\nabla F(x)\\
		0
	\end{bmatrix*}, \quad
	B(x,y) \eqdef \begin{bmatrix*}[c]
	y\\
	-\mW x
	\end{bmatrix*}.
	\end{equation} 
	Then $M,A$ and $B$ are monotone operators. Indeed, $A$ is the gradient of the convex function $(x,y) \mapsto F(x)$, $B$ satisfies $$\Dotprod{B(x,y),(x,y)}_{\sX} = \Dotprod{x - \mW^{\dagger}\mW x, y} = 0$$ for every $(x,y) \in \sX$ (since $y \in \range(\mW)$), and $M = A+B$. Moreover, $M(x^*,y^*) = 0$, i.e., $(x^*,y^*)$ is a zero of $M$. 
	
	One idea to solve~\eqref{eq:primaldualoptimal} is therefore to apply Algorithm~\eqref{eq:FB} to the sum $A+B$. However, computing the resolvent $J_{B}$ in a decentralized way across the network $\cG$ is notably challenging. 	Another idea is to apply~\eqref{eq:FBP} using the symmetric positive definite operator $P : \sX \to \sX$ defined by
	\begin{equation}
		\label{eq:Ppapc}
		\mP = \begin{bmatrix*}[c]
		\frac{1}{\eta}\mI & 0\\
		0 & \frac{1}{\theta}\mI - \eta \mW
		\end{bmatrix*}.
	\end{equation}
	Indeed, for every $(x,y) \in \sX$, $(x',y') = J_{P^{-1}B}(x,y)$ implies $x' = x - \eta y'$ and $\frac{1}{\theta}(y' - y) - \eta \mW (y' - y) = \mW x' = \mW (x - \eta y')$. Therefore, $y' = y + \theta \mW(x - \eta y)$, and the computation of $J_{P^{-1}B}$ only requires one multiplication by $\mW$, i.e., one local communication round. The resulting algorithm is 
	\begin{equation} \label{eq:papc}
			\begin{array}{r@{}l}
		y^{k+1} &{} = y^k + \theta \mW(x^k - \eta \nabla F(x^k) - \eta y^k), \\
		x^{k+1} &{} = x^k - \eta \nabla F(x^k) - \eta y^{k+1}.
			\end{array}
	\end{equation}

\begin{remark} The Proximal Alternating Predictor–Corrector (PAPC) algorithm, a.k.a.\ Loris–Verhoven~\citep{loris2011generalization,drori2015simple,chen2013primal,con19} is a primal dual algorithm that can tackle Problem~\eqref{eq:2}. Up to a change of variable, Algorithm~\eqref{eq:papc} can be shown to be equivalent to PAPC applied to \eqref{eq:2}. Moreover, it was already noticed that the PAPC can be represented as a Forward Backward algorithm~\eqref{eq:FBP}~\citep{con19}.
	\end{remark}

Invoking a complexity result on the PAPC from~\cite{salim2020dualize}, the complexity of Algorithm~\eqref{eq:papc} is $\cO\left((\kappa+\chi(\mW))\log(1/\varepsilon)\right),$	 both in communication and gradient computations. This complexity is equivalent to that of the best performing non accelerated algorithm proposed recently, such as Exact diffusion, NIDS and EXTRA (see~\cite{li2020revisiting,xu2020distributed}). In spite of this, we are able to accelerate the convergence of Algorithm~\eqref{eq:papc}.
	
	In particular, we propose a new algorithm that can be seen as an accelerated version of Algorithm~\eqref{eq:papc}. The proposed algorithm (APAPC) is defined in Algorithm~\ref{alg:ALV}) , and its complexity is given in Theorem~\ref{th:ALV}. We prove that the complexity of APAPC is $\cO((\sqrt{\kappa \chi(\mW)} + \chi(\mW))\log(1/\varepsilon)),$ both in communication rounds and gradient computations. The proposed algorithm is accelerated because its dependence on the condition number $\kappa$ is $\cO(\sqrt{\kappa})$ instead of $\cO(\kappa)$.

	\begin{algorithm}[H]
		\caption{Accelerated PAPC (APAPC)}
		\label{alg:ALV}
		\begin{algorithmic}[1]
			\State {\bf Parameters:}  $x^0 \in \R^{nd},  y^0 \in  \range \mW$, $\eta,\theta ,\alpha>0, \tau\in (0,1)$
			\State Set $x_f^0 = x^0$
			\For{$k=0,1,2,\ldots$}{}
			\State $x_g^k = \tau x^k + (1-\tau)x_f^k$\label{alg:ALV:line:x:1}
			\State $x^{k+1/2} = (1+\eta\alpha)^{-1}(x^k - \eta (\nabla F(x_g^k) - \alpha  x_g^k + y^k))$\label{alg:ALV:line:x:2}
			\State $y^{k+1} = y^k  + \theta  \mW x^{k+1/2}$\label{alg:ALV:line:y}
			\State $x^{k+1} = (1+\eta\alpha)^{-1}(x^k - \eta (\nabla F(x_g^k) - \alpha  x_g^k + y^{k+1}))$\label{alg:ALV:line:x:3}
			\State $x_f^{k+1} = x_g^k + \tfrac{2\tau}{2-\tau}(x^{k+1} - x^k)$\label{alg:ALV:line:x:4}			
			\EndFor
		\end{algorithmic}
	\end{algorithm}

\newpage

	\begin{theorem}[Accelerated PAPC]
		\label{th:ALV}
		Set the parameters $\eta, \theta, \alpha, \tau$ to
		$
		\mytextstyle	\eta = \frac{1} {4\tau L}$, $\theta = \frac{1}{\eta\lambda_{\max}(\mW)}$, $\alpha = \mu$, and $\tau = \min \left\{1,\frac{1}{2}\sqrt{\frac{\chi(\mW)}{\kappa} }\right\}.
		$
		Then,
		\begin{equation}
		\mytextstyle	
		\frac{1}{\eta}\sqN{x^k - x^*} +
			\frac{2(1-\tau)}{\tau}\bg_F(x_f^k, x^*)
			\leq
			\left(1 + \frac{1}{4}\min\left\{\frac{1}{\sqrt{\kappa\chi(\mW)}},\frac{1}{\chi(\mW)}\right\}\right)^{-k}
			C,
		\end{equation}
		where $\bg_F$ is the Bregman divergence of $F$ and
		$C \eqdef \frac{1}{\eta}\sqN{x^0 - x^*} + \frac{1}{\theta}\sqn{y^0 - y^*}_{\mW^\dagger} +
		\frac{2(1-\tau)}{\tau}\bg_F(x_f^0, x^*).
	$
	Moreover, for every $\varepsilon >0$, APAPC finds $x^k$ for which $\sqn{x^k - x^*} \leq \varepsilon$ in at most $\cO\left(\left(\sqrt{\kappa \chi(\mW)} + \chi(\mW)\right)\log(1/\varepsilon)\right)$ computations (resp.\ communication rounds). 
	\end{theorem}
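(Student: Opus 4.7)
The plan is to establish a one-step contraction on a Lyapunov function of the form
\[
\Phi^k \;\eqdef\; \tfrac{1}{\eta}\sqN{x^k - x^*} + \tfrac{1}{\theta}\sqn{y^k - y^*}_{\mWp} + \tfrac{2(1-\tau)}{\tau}\bg_F(x_f^k, x^*),
\]
and show that $\Phi^{k+1} \leq (1+\rho)^{-1}\Phi^k$ with $\rho = \tfrac14 \min\{1/\sqrt{\kappa\chi(\mW)},\, 1/\chi(\mW)\}$; iterating this and dropping the (nonnegative) dual term on the left recovers the stated bound, and translating the contraction into an iteration count yields the $\cO((\sqrt{\kappa\chi(\mW)}+\chi(\mW))\log(1/\varepsilon))$ complexity.

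The first block of work is the primal analysis. Using line~\ref{alg:ALV:line:x:3}, I would write the first-order optimality of the ``corrector'' step as $\tfrac{1}{\eta}(x^k - x^{k+1}) = \nabla F(x_g^k) + \alpha(x^{k+1} - x_g^k) + y^{k+1}$, combine with the optimality conditions $\nabla F(x^*) + y^* = 0$ and $\mW x^* = 0$, and take an inner product with $x^{k+1}-x^*$ to produce a three-point identity bounding $\tfrac{1}{\eta}\sqN{x^{k+1}-x^*}$ in terms of $\tfrac{1}{\eta}\sqN{x^k-x^*}$, a term $\langle \nabla F(x_g^k) - \nabla F(x^*),\,x^{k+1}-x^*\rangle$, and a dual coupling term $\langle y^{k+1}-y^*,\, x^{k+1}-x^*\rangle$. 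The gradient inner product is then split in the standard Nesterov way across $x^{k+1}-x_g^k$ (handled by $L$-smoothness to produce a negative $-\tfrac{L}{2}\sqN{x_f^{k+1}-x_g^k}$-type term and a $\bg_F(x_f^{k+1},x^*)$ term, using the extrapolation in line~\ref{alg:ALV:line:x:4}) and $x_g^k - x^*$ (handled by $\mu$-strong convexity together with the convex combination in line~\ref{alg:ALV:line:x:1} to relate $x_g^k - x^*$ to $x^k - x^*$ and $x_f^k - x^*$, producing the $\tfrac{2(1-\tau)}{\tau}\bg_F(x_f^k,x^*)$ term in $\Phi^k$). The $\alpha = \mu$ choice is what allows the strong convexity term to cancel cleanly with the $\alpha$-term coming from the Moreau-type step $(1+\eta\alpha)^{-1}$ on line~\ref{alg:ALV:line:x:2}.

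Next is the dual analysis. From line~\ref{alg:ALV:line:y}, $y^{k+1}-y^* = y^k - y^* + \theta \mW x^{k+1/2}$, so expanding $\tfrac{1}{\theta}\sqn{y^{k+1}-y^*}_{\mWp}$ produces $\tfrac{1}{\theta}\sqn{y^k-y^*}_{\mWp}$, plus $2\langle y^{k+1}-y^*,\,x^{k+1/2}-x^*\rangle$, minus a $\theta$-term. The predictor step on line~\ref{alg:ALV:line:x:2} and the corrector on line~\ref{alg:ALV:line:x:3} are designed precisely so that $x^{k+1} - x^{k+1/2} = \eta(y^k - y^{k+1})$, which means the coupling term $\langle y^{k+1}-y^*, x^{k+1}-x^*\rangle$ from the primal step combines with $\langle y^{k+1}-y^*, x^{k+1/2}-x^*\rangle$ from the dual step to give a telescoping cancellation, leaving a residual controlled by $\theta\lambda_{\max}(\mW) = 1/\eta$, which is exactly what the choice $\theta = 1/(\eta\lambda_{\max}(\mW))$ is engineered for.

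The last step is to show that the remaining negative terms dominate $\rho\Phi^{k+1}$. Extracting $\rho\cdot\tfrac{1}{\eta}\sqN{x^{k+1}-x^*}$ is paid by the strong-convexity term $\mu\sqN{x_g^k-x^*}$, which requires $\rho \lesssim \eta\mu = 1/(4\tau\kappa)$; extracting $\rho\cdot\tfrac{1}{\theta}\sqn{y^{k+1}-y^*}_{\mWp}$ is paid by the $-\theta\sqN{\mW x^{k+1/2}}$ term along with $\lambda_{\min}^+(\mW)$, giving $\rho \lesssim \tau/\chi(\mW)$; and $\rho\cdot\tfrac{2(1-\tau)}{\tau}\bg_F(x_f^{k+1},x^*)$ is paid by the estimating-sequence recursion and only needs $\rho\lesssim \tau$. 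Balancing gives $\tau = \min\{1,\tfrac12\sqrt{\chi(\mW)/\kappa}\}$ and $\rho = \Theta(\min\{1/\sqrt{\kappa\chi(\mW)},\,1/\chi(\mW)\})$, matching the theorem. The main obstacle I expect is the bookkeeping around the predictor/corrector cancellation combined with the Nesterov extrapolation: getting the signs and the $\tfrac{2\tau}{2-\tau}$ coefficient on line~\ref{alg:ALV:line:x:4} to produce exactly the identity that lets the $F$-terms telescope into a Bregman-difference recursion, rather than something weaker, is the delicate part.
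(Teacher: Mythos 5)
Your proposal follows essentially the same route as the paper's proof: a one-step contraction of the Lyapunov function $\tfrac{1}{\eta}\sqN{x^k-x^*}+\tfrac{1}{\theta}\sqn{y^k-y^*}_{\mWp}+\tfrac{2(1-\tau)}{\tau}\bg_F(x_f^k,x^*)$ (the paper uses the equivalent $\mP$-weighted norm whose dual block carries a $-(1+\eta\alpha)^{-1}\eta\mI$ correction), with the same predictor--corrector cancellation of the coupling terms, the same Nesterov-style splitting of the gradient inner product, and the same balancing of $\rho$ against $\eta\mu$, $\tau/\chi(\mW)$ and $\tau$. The one slip is that lines~\ref{alg:ALV:line:x:2} and~\ref{alg:ALV:line:x:3} give $x^{k+1}-x^{k+1/2}=(1+\eta\alpha)^{-1}\eta(y^k-y^{k+1})$ rather than $\eta(y^k-y^{k+1})$; the paper absorbs exactly this factor into the dual block of its $\mP$-norm, which is precisely the bookkeeping you flag as the delicate part.
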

	The proposed algorithm~\ref{alg:ALV} provably accelerates Algorithm~\eqref{eq:papc}.
	The proof intuitively relies on viewing Algorithm~\ref{alg:ALV} as an accelerated version of~\eqref{eq:FBP}, although Nesterov's acceleration does not apply to general monotone operators \textit{a priori}.

\subsection{A decentralized algorithm optimal both in communication and computation complexity}
As mentioned before, while APAPC is accelerated, it is not optimal. We now derive a  variant which is optimal both in gradient computations and communication rounds. Following \citet[Section 4.2]{scaman2017optimal}, our main tool to derive the new decentralized optimal algorithm is the Chebyshev acceleration \citep{scaman2017optimal,arioli2014chebyshev}.

In particular, there exists a polynomial $P$ such that
\begin{itemize}
\item [(i)]  $P(\mW)$ is a Gossip matrix, 
\item [(ii)]  multiplication by $P(\mW)$ requires $\floor*{\sqrt{\chi(\mW)}}$ multiplications by $\mW$ (i.e., communication rounds) and is described by the subroutine \textsc{AcceleratedGossip} proposed in~\cite[Algorithm 2]{scaman2017optimal} and recalled in Algorithm~\ref{alg:ALV-opt} for the ease of reading,
\item [(iii)] $\chi(P(\mW)) \leq 4$.
\end{itemize}

		Therefore, one can replace $\mW$ by $P(\mW)$ in Problem~\eqref{eq:2} to obtain an equivalent problem. Applying
		APAPC to the equivalent problem leads to a linearly converging decentralized algorithm. This new algorithm, called Optimal PAPC (OPAPC), is formalized as Algorithm~\ref{alg:ALV-opt}. 
	\begin{algorithm}[H]
		\caption{Optimal PAPC (OPAPC)}
		\label{alg:ALV-opt}
		\begin{algorithmic}[1]
			\State {\bf Parameters:}  $x^0 \in \R^{nd},  y^0 \in  \range P(\mW)$, $T \in \mathbb N^{*}$, $c_1,c_2,c_3,\eta,\theta ,\alpha>0, \tau \in (0,1)$
			\State Set $x_f^0 = x^0$
			\For{$k=0,1,2,\ldots$}{}
			\State $x_g^k = \tau x^k + (1-\tau)x_f^k$\label{alg:ALV-opt:line:x:1}
			\State $x^{k+1/2} = (1+\eta\alpha)^{-1}(x^k - \eta (\nabla F(x_g^k) - \alpha  x_g^k + y^k))$\label{alg:ALV-opt:line:x:2}
			\State $y^{k+1} = y^k  + \theta  \textsc{AcceleratedGossip}(\mW,x^{k+1/2},T)$\label{alg:ALV-opt:line:y}
			\State $x^{k+1} = (1+\eta\alpha)^{-1}(x^k - \eta (\nabla F(x_g^k) - \alpha  x_g^k + y^{k+1}))$\label{alg:ALV-opt:line:x:3}
			\State $x_f^{k+1} = x_g^k + \tfrac{2\tau}{2-\tau}(x^{k+1} - x^k)$\label{alg:ALV-opt:line:x:4}
			\EndFor
		\Procedure{AcceleratedGossip}{$\mW,x,T$}
		\State Set $a_0 = 1$, $a_1 = c_2$, $x_0 = x$, $x_1 = c_2 (I-c_3 \mW)x$
		\For{$i=1,\ldots,T-1$}{}
			\State $a_{i+1} = 2c_2 a_i - a_{i-1}$
			\State $x_{i+1} = 2c_2 (I - c_3 \mW)x_i - x_{i-1}$
		\EndFor

		\Return $x - \frac{x_T}{a_T}$
		\EndProcedure
		\end{algorithmic}
	\end{algorithm}

Using the properties of $P(\mW)$ mentioned above, we obtain the following corollary of Theorem~\ref{th:ALV}.	
	\begin{corollary}[Optimal PAPC]
		\label{th:ALV-opt-cor}
		Set the parameters $T,c_1,c_2,c_3,\eta, \theta, \alpha, \tau$ to
\[ T = \floor*{\sqrt{\chi(\mW)}},\quad
			c_1 = \frac{\sqrt{\chi(\mW)} - 1}{\sqrt{\chi(\mW)} + 1},\quad 
			c_2 = \frac{\chi(\mW)+1}{\chi(\mW)-1},\quad c_3 = \frac{2\chi(\mW)}{(1+\chi(\mW))\lambda_{\max}(\mW)}, \]
\[\eta = \frac{1} {4\tau L},
			\quad \theta = \frac{1+c_1^{2T}}{\eta (1+c_1^T)^2},\quad \alpha = \mu,\quad
		 \tau = \min \left\{1, \frac{1+c_1^T}{2\sqrt{\kappa} (1-c_1^T)}\right\}.
\]
		Then, there exists $C \geq 0$ such that
		\begin{equation}
			\mytextstyle \frac{1}{\eta}\sqN{x^k - x^*} +
			\frac{2(1-\tau)}{\tau}\bg_F(x_f^k, x^*)
			\leq
			\left(1 + \frac{1}{16}\min\left\{\frac{2}{\sqrt{\kappa}},1\right\}\right)^{-k}
			C.
		\end{equation}
		Moreover, for every $\varepsilon >0$, OPAPC finds $x^k$ for which $\sqn{x^k - x^*} \leq \varepsilon$ in at most
	 $\cO\left(\sqrt{\kappa}\log(1/\varepsilon)\right)$  gradient computations and at most 
		$\cO\left(\sqrt{\kappa \chi(\mW)}\log(1/\varepsilon)\right)$ communication rounds.
	\end{corollary}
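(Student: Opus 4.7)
The plan is to reduce Corollary~\ref{th:ALV-opt-cor} to Theorem~\ref{th:ALV} by viewing OPAPC as APAPC applied to the equivalent problem in which the gossip matrix $\mW$ is replaced by the Chebyshev-accelerated gossip matrix $P(\mW)$. Properties (i)--(iii) listed before the algorithm guarantee that $P(\mW)$ is itself a gossip matrix with $\ker P(\mW) = \ker \mW$, hence $\range P(\mW) = \range \mW$, so Problem~\eqref{eq:2} and its associated primal--dual inclusion~\eqref{eq:primaldualoptimal} are unchanged by the substitution and have the same set of solutions.

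First I would verify that Algorithm~\ref{alg:ALV-opt} is literally Algorithm~\ref{alg:ALV} applied to this reformulated problem. The only syntactic difference lies in the dual update, where \textsc{AcceleratedGossip}$(\mW, x^{k+1/2}, T)$ replaces $\mW x^{k+1/2}$; by construction of the Chebyshev iteration, this subroutine outputs exactly $P(\mW)\,x^{k+1/2}$. Next I would match parameters: for the reformulated problem, Theorem~\ref{th:ALV} prescribes $\theta = 1/(\eta\lambda_{\max}(P(\mW)))$ and $\tau = \min\{1,\tfrac{1}{2}\sqrt{\chi(P(\mW))/\kappa}\}$, and a direct computation using the explicit Chebyshev coefficients $c_1,c_2,c_3$ yields
\[
\lambda_{\max}(P(\mW)) = \frac{(1+c_1^T)^2}{1+c_1^{2T}}, \qquad \chi(P(\mW)) = \frac{(1+c_1^T)^2}{(1-c_1^T)^2},
\]
which match the expressions for $\theta$ and $\tau$ stated in the corollary; the choices $\eta = 1/(4\tau L)$ and $\alpha = \mu$ carry over unchanged.

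Applying Theorem~\ref{th:ALV} to the reformulated problem then yields the linear rate with $\chi(\mW)$ replaced by $\chi(P(\mW))$. Plugging in the bound $\chi(P(\mW)) \leq 4$ from property (iii), the two terms inside the minimum become $1/\sqrt{4\kappa} = 1/(2\sqrt{\kappa})$ and $1/4$, and the prefactor $\tfrac14$ combines with these to give exactly $\tfrac{1}{16}\min\{2/\sqrt{\kappa},1\}$, reproducing the rate in the corollary. Inverting this rate shows that $\cO(\sqrt{\kappa}\log(1/\varepsilon))$ outer iterations suffice to drive $\sqn{x^k-x^*}$ below $\varepsilon$. Each outer iteration performs exactly one evaluation of $\nabla F$, yielding the advertised gradient complexity, and one call to \textsc{AcceleratedGossip} costing $T = \lfloor\sqrt{\chi(\mW)}\rfloor$ multiplications by $\mW$, yielding communication complexity $\cO(\sqrt{\kappa\,\chi(\mW)}\log(1/\varepsilon))$.

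The main technical obstacle lies in the Chebyshev construction: verifying that the degree-$T$ polynomial generated by the \textsc{AcceleratedGossip} recursion makes $P(\mW)$ symmetric positive semidefinite with $\ker P(\mW) = \ker \mW$ and admits the uniform spectral bound $\chi(P(\mW)) \leq 4$ when $T = \lfloor\sqrt{\chi(\mW)}\rfloor$, together with the identification of $\lambda_{\max}(P(\mW))$ and $\chi(P(\mW))$ used above. These are standard consequences of Chebyshev acceleration~\citep{scaman2017optimal,arioli2014chebyshev}, but they are precisely what converts the additive $\cO(\chi)$ term of Theorem~\ref{th:ALV} into the optimal $\sqrt{\kappa\chi}$ communication complexity; once these spectral facts are in hand, the rest of the argument is a mechanical substitution into Theorem~\ref{th:ALV}.
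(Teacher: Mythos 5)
Your proposal follows the same route as the paper: identify OPAPC with APAPC applied to the problem in which $\mW$ is replaced by the Chebyshev polynomial $P(\mW)$ (computed by \textsc{AcceleratedGossip}), invoke Theorem~\ref{th:ALV} for that matrix, use $\chi(P(\mW))\le 4$ to obtain the rate $\tfrac{1}{16}\min\{2/\sqrt{\kappa},1\}$, and multiply the iteration count by $T=\floor*{\sqrt{\chi(\mW)}}$ to get the communication complexity. The one imprecision is that you assert the exact identities $\lambda_{\max}(P(\mW))=\frac{(1+c_1^T)^2}{1+c_1^{2T}}$ and $\chi(P(\mW))=\frac{(1+c_1^T)^2}{(1-c_1^T)^2}$: the Chebyshev minimax property only yields these as an \emph{upper} bound on $\lambda_{\max}(P(\mW))$ and (via the companion quantity) a \emph{lower} bound on $\lambda_{\min}^+(P(\mW))$, since the extremes of $|1-P(t)|$ over the spectral interval need not be attained at actual eigenvalues of $\mW$. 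Consequently the parameters $\theta$ and $\tau$ in the corollary are Theorem~\ref{th:ALV}'s prescription with $\lambda_{\max}$ and $\lambda_{\min}^+$ replaced by these bounds rather than by the true extreme eigenvalues, and to make the substitution legitimate you need the (easy) additional observation, which the paper records in a footnote, that Theorem~\ref{th:ALV} and its proof remain valid verbatim when $\lambda_{\max}(\mW)$ is replaced by any upper bound $\lambda_1$ and $\lambda_{\min}^+(\mW)$ by any positive lower bound $\lambda_2$. With that one-line remark added, your argument is complete and coincides with the paper's.
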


	The Algorithm~\ref{alg:ALV-opt} achieves both the lower bounds of Theorem~\ref{th:lower}. In particular, the lower bounds of Theorem~\ref{th:lower} are tight.

\section{Numerical Experiments}
\label{sec:exp}

In this section, we perform experiments with logistic regression for binary classification with $\ell^2$ regularizer, where our loss function has the form
$$\mytextstyle f_i(x) = \frac{1}{m}\sum \limits_{j=1}^m \log(1 + \exp(- b_{ij} a_{ij}^\top x)) + \frac{r}{2}\sqn{x},$$
where $a_{ij}\in \R^d$, $b_{ij}\in \{-1,+1\}$ are data points, $r$ is the regularization parameter, $m$ is the number of data points stored on each node. 

In our experiments we used $10,000$ data samples randomly distributed to the nodes of network of size $n=100$, $m=100$ samples per each node. We used 2 networks: $10\times10$ grid and Erd\"{o}s-R\'{e}nyi random graph of average degree 6. Same setup was tested by \citet{scaman2017optimal}.

\subsection{Experiments with LIBSVM data}
We use three LIBSVM\footnote{The LIBSVM dataset collection is available at \texttt{\url{https://www.csie.ntu.edu.tw/~cjlin/libsvmtools/datasets/}}} datasets: a6a, w6a, ijcnn1. The regularization parameter was chosen so that $\kappa \approx 10^3$. 
Additional experiments with synthetic data are given in the Supplementary material.

\begin{figure}[ht]
	\begin{subfigure}{.5\textwidth}
		\includegraphics[width=.49\textwidth]{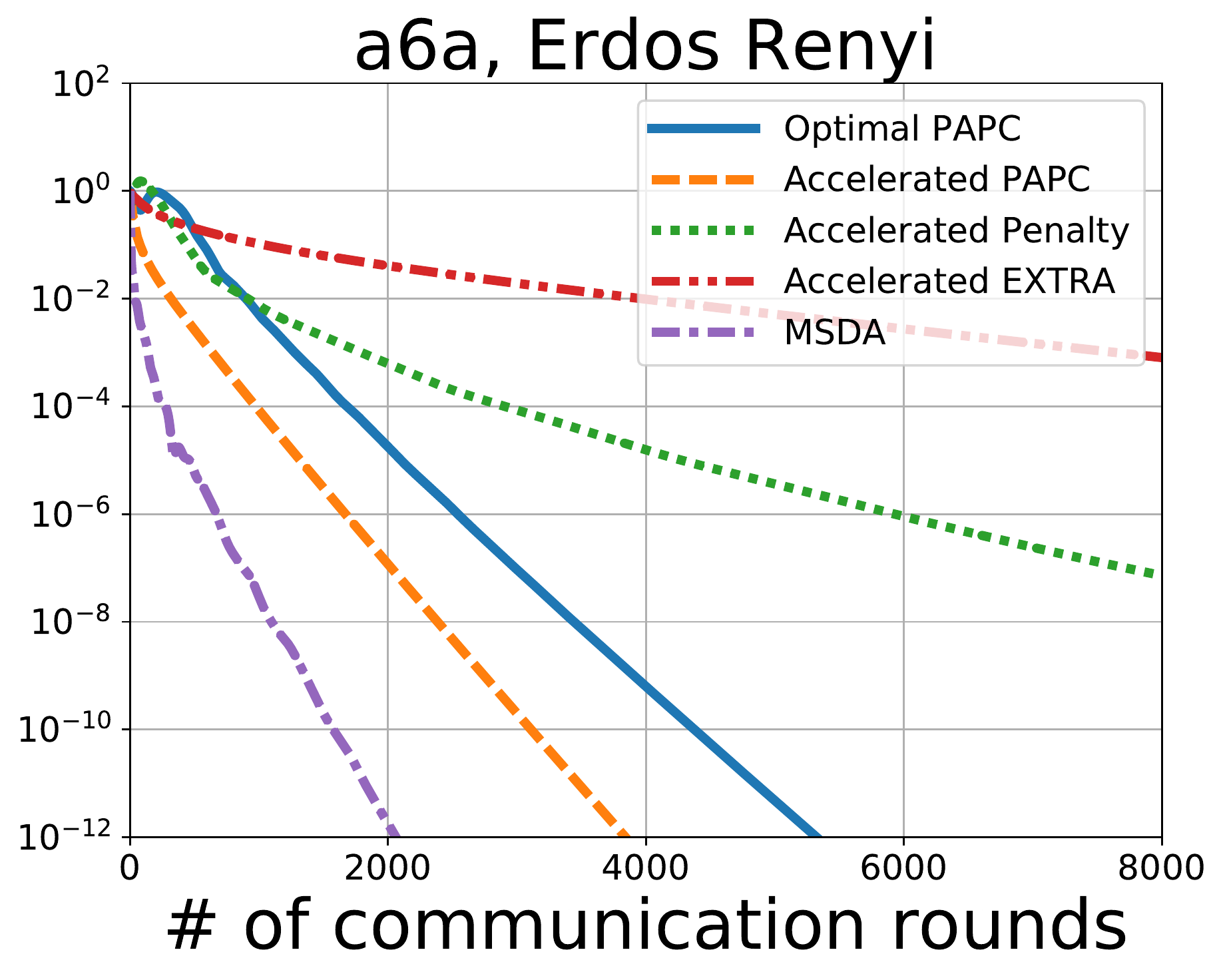}
		\includegraphics[width=.49\textwidth]{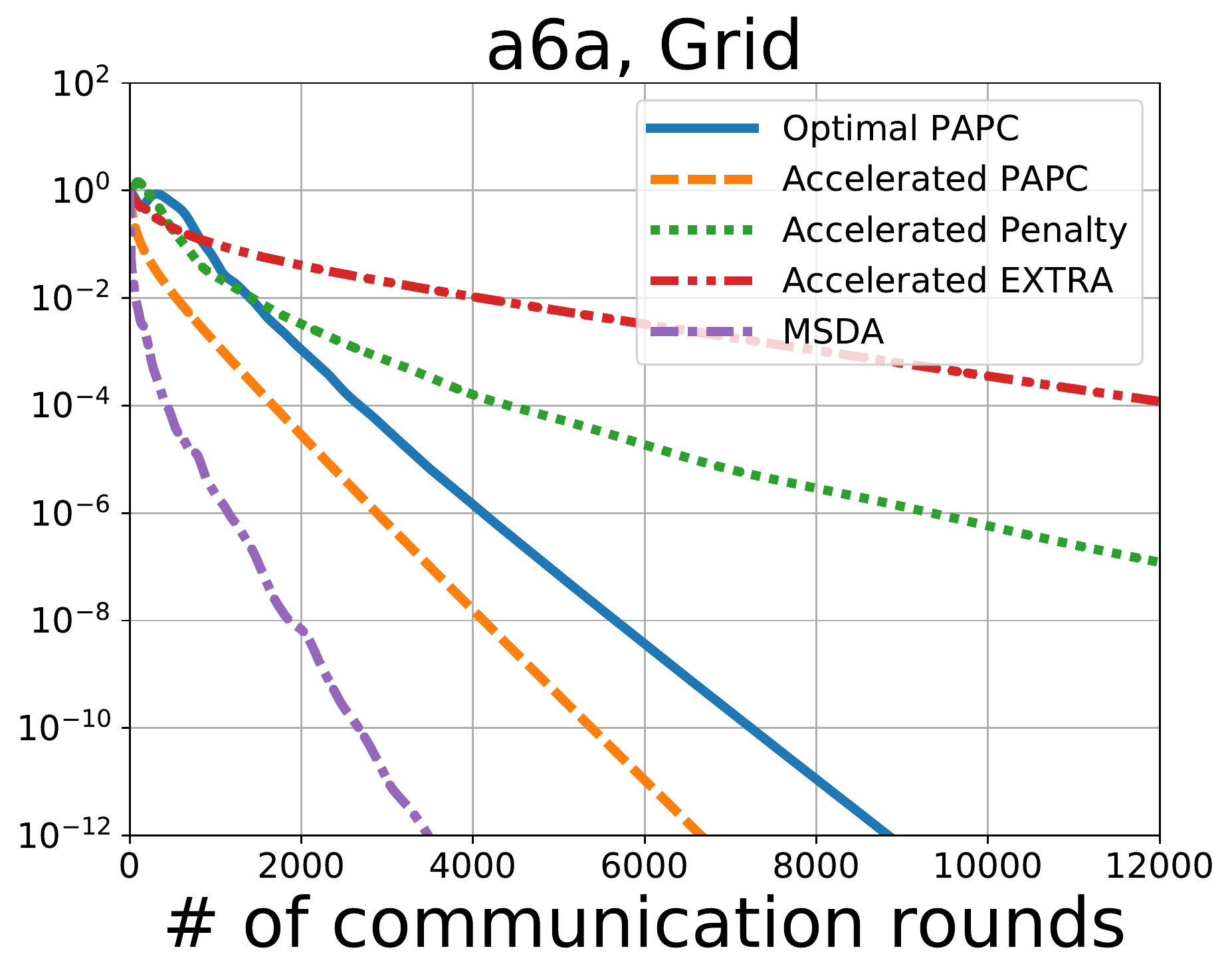}
		\includegraphics[width=.49\textwidth]{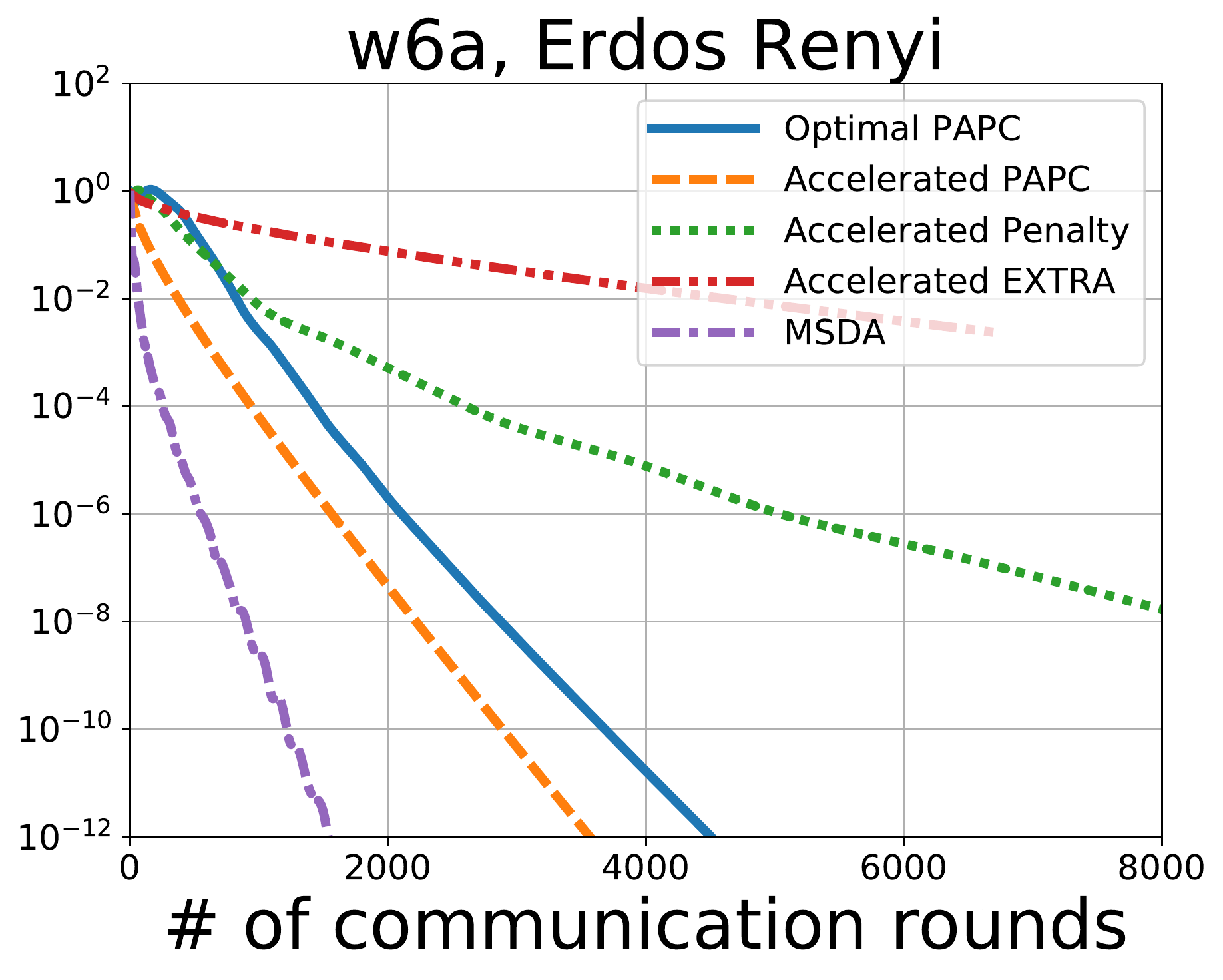}
		\includegraphics[width=.49\textwidth]{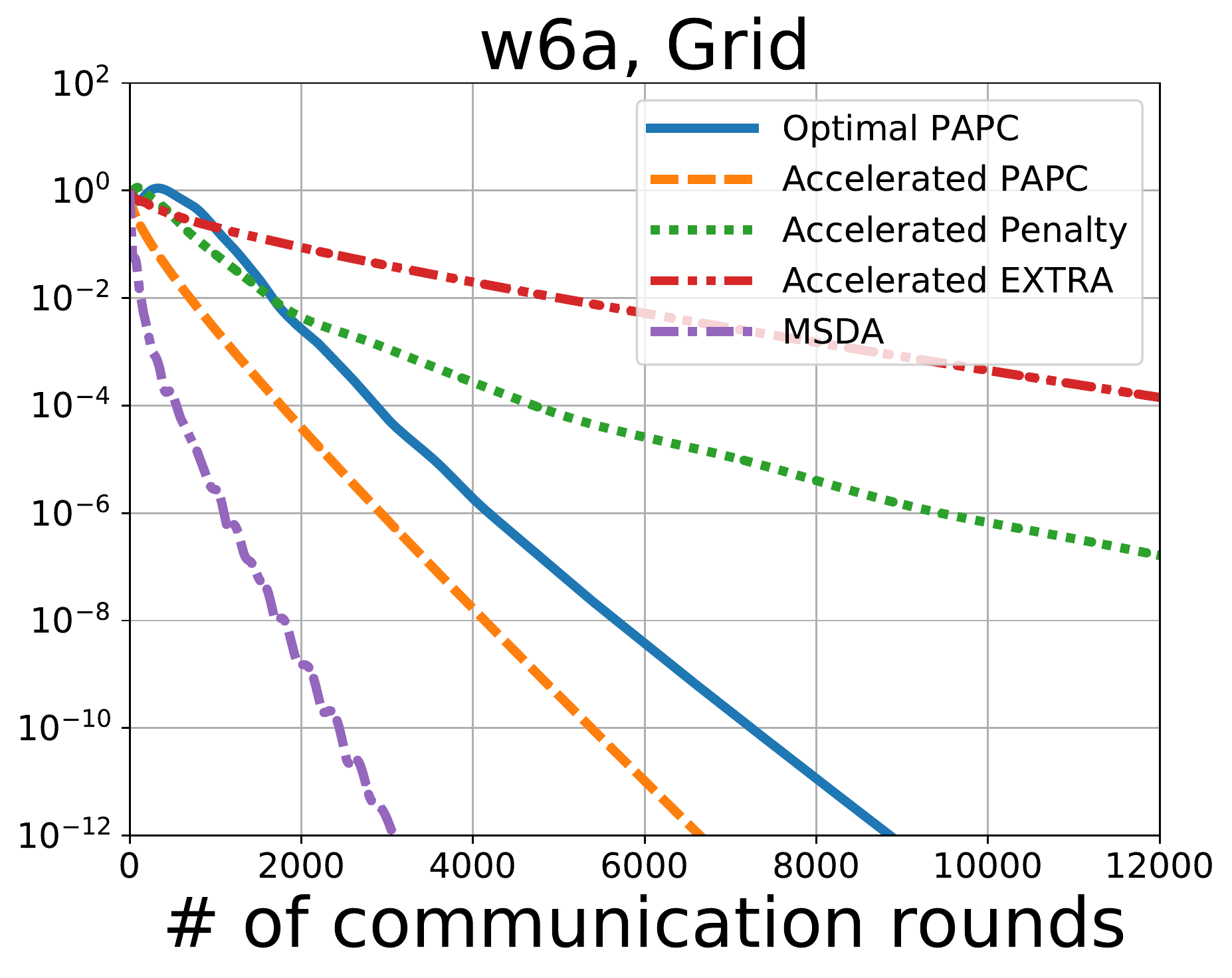}
		\includegraphics[width=.49\textwidth]{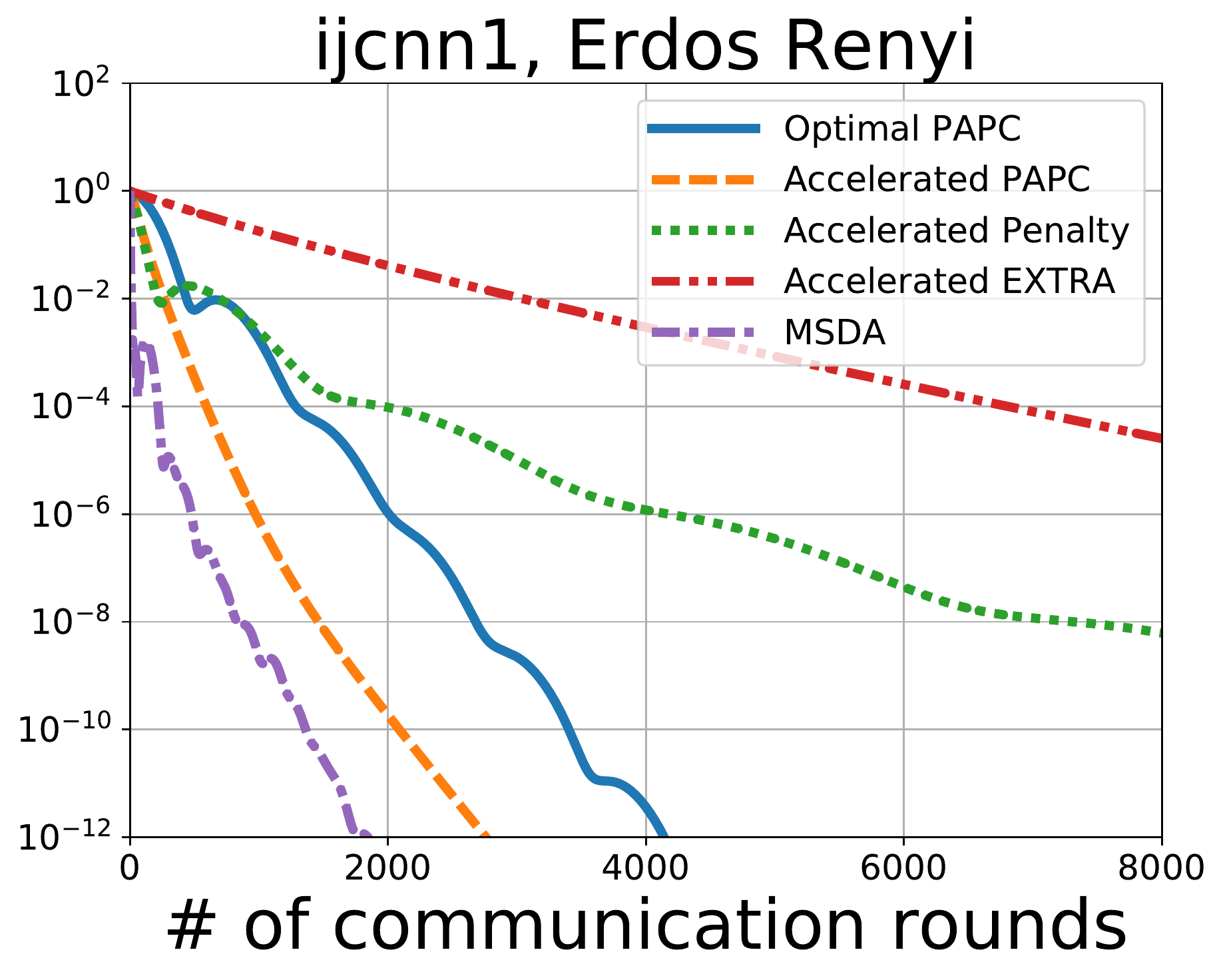}
		\includegraphics[width=.49\textwidth]{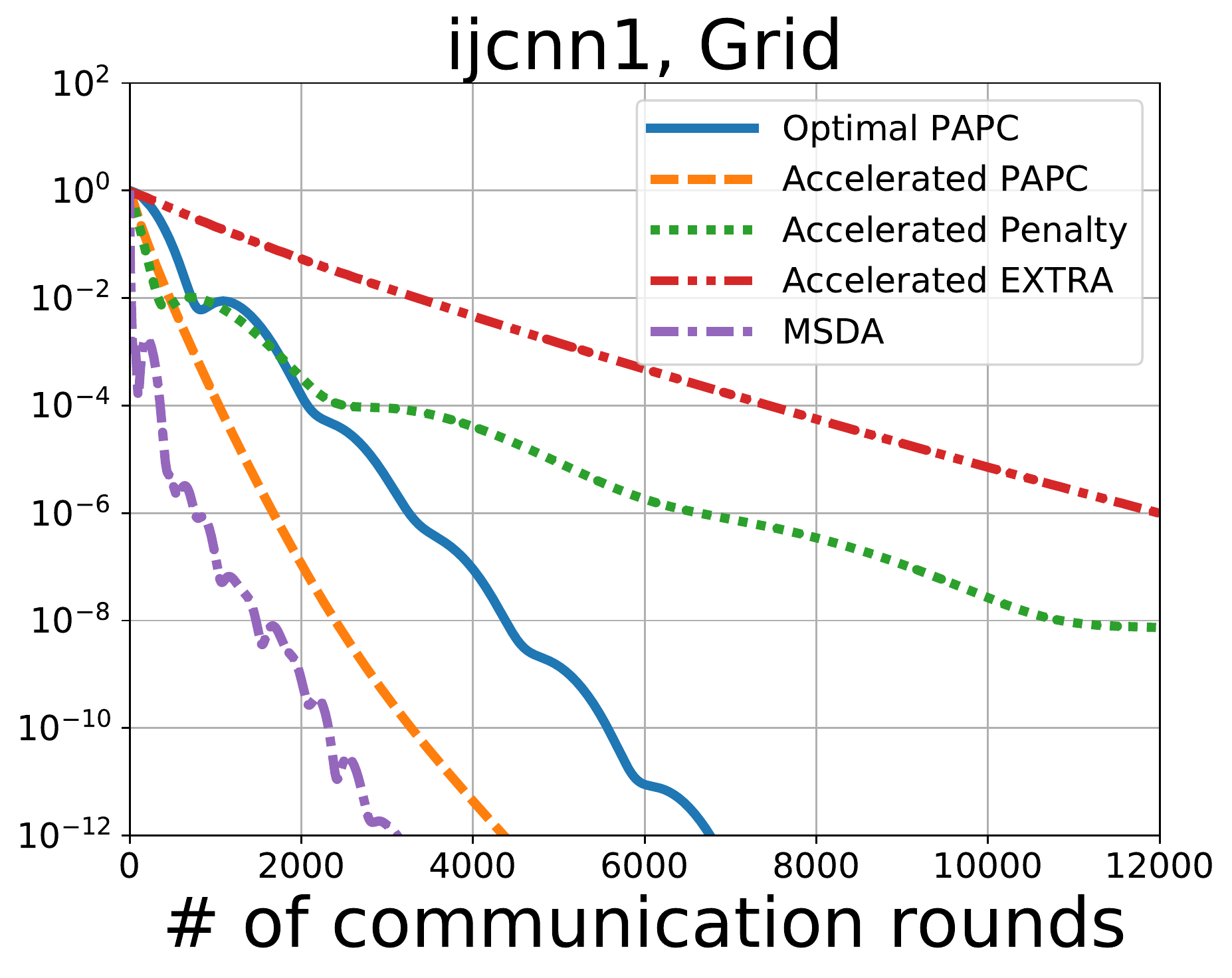}
		\caption{Communication complexity.}
		\label{fig:comm}
	\end{subfigure}
	\begin{subfigure}{.5\textwidth}
		\includegraphics[width=.49\textwidth]{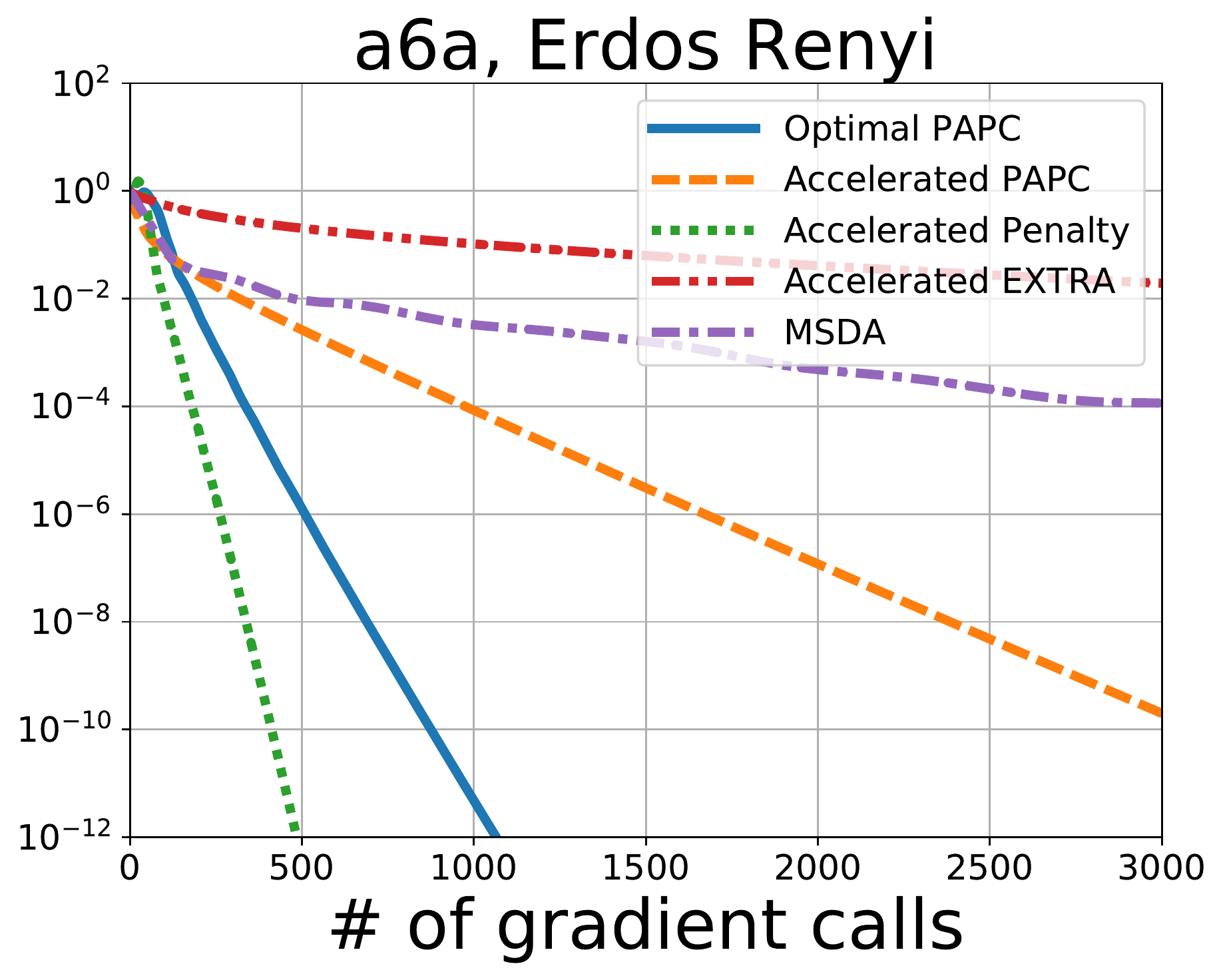}
		\includegraphics[width=.49\textwidth]{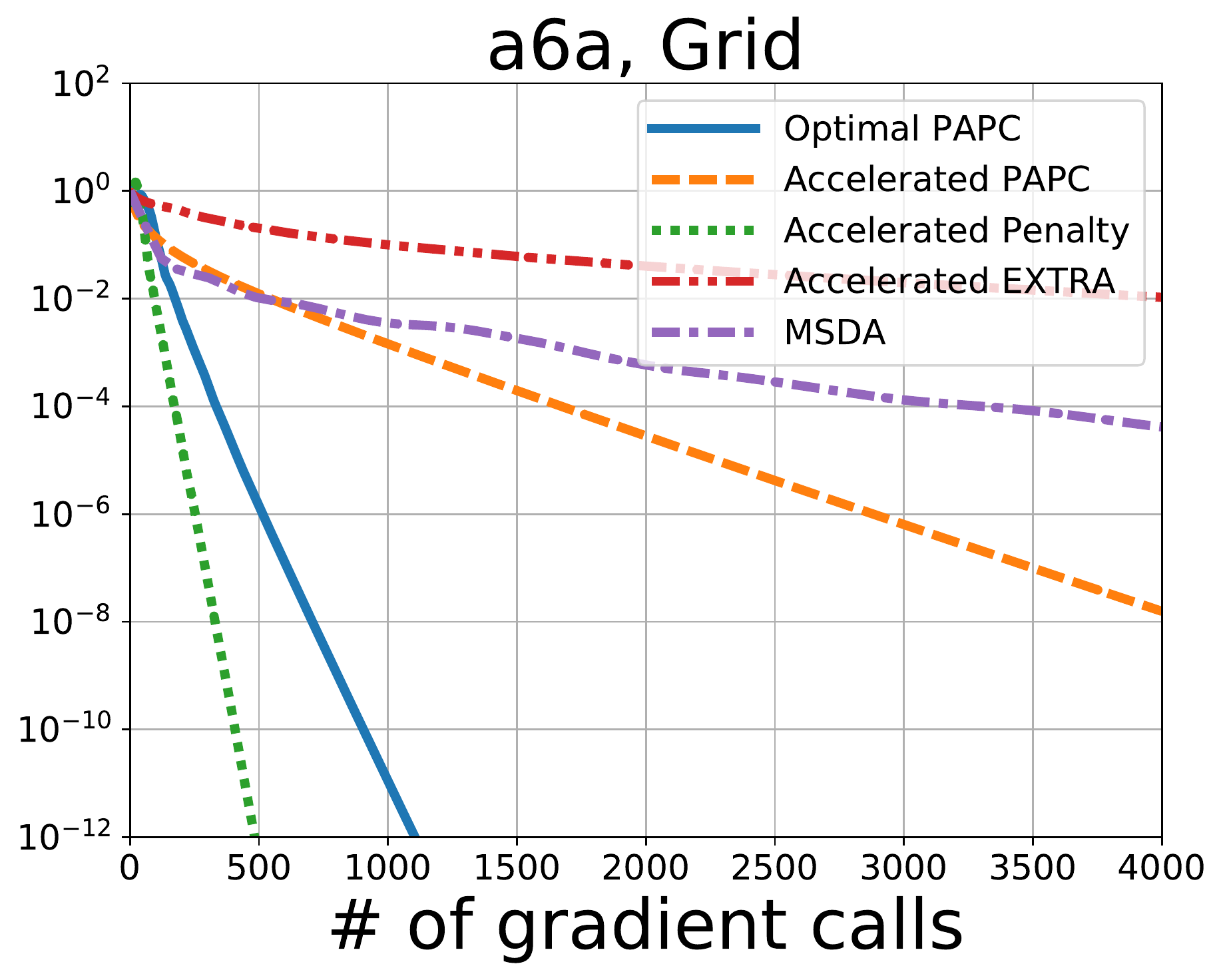}
		\includegraphics[width=.49\textwidth]{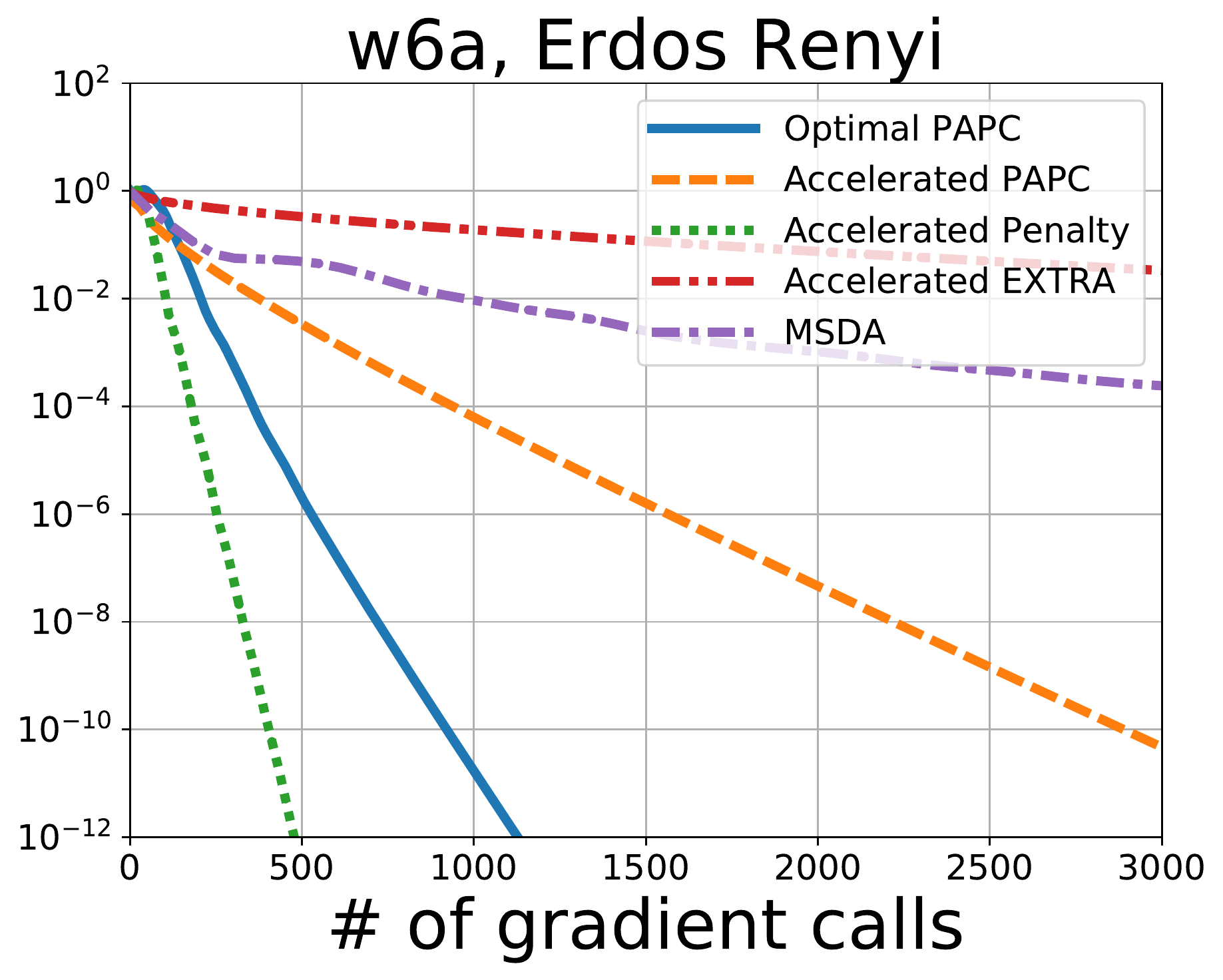}
		\includegraphics[width=.49\textwidth]{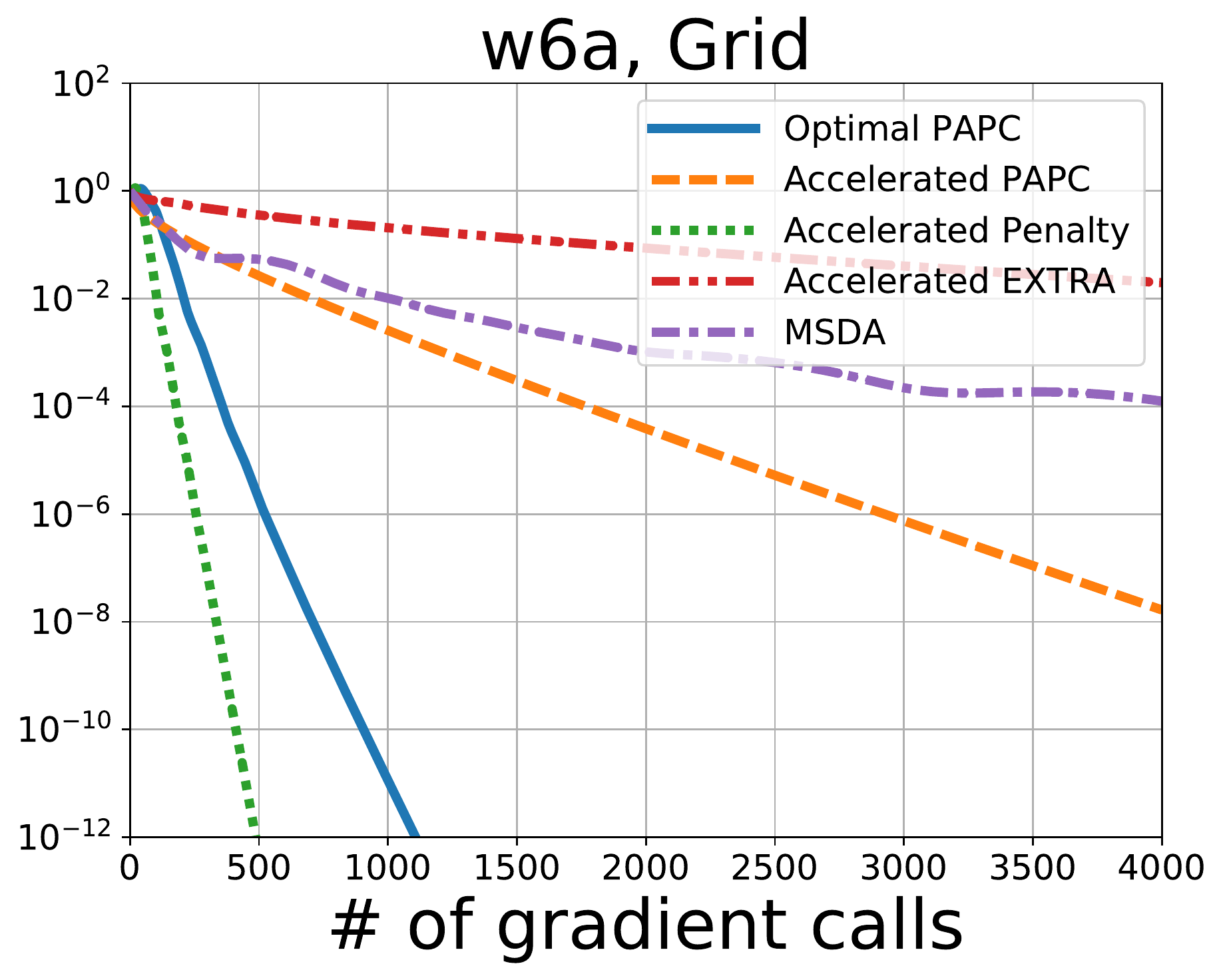}
		\includegraphics[width=.49\textwidth]{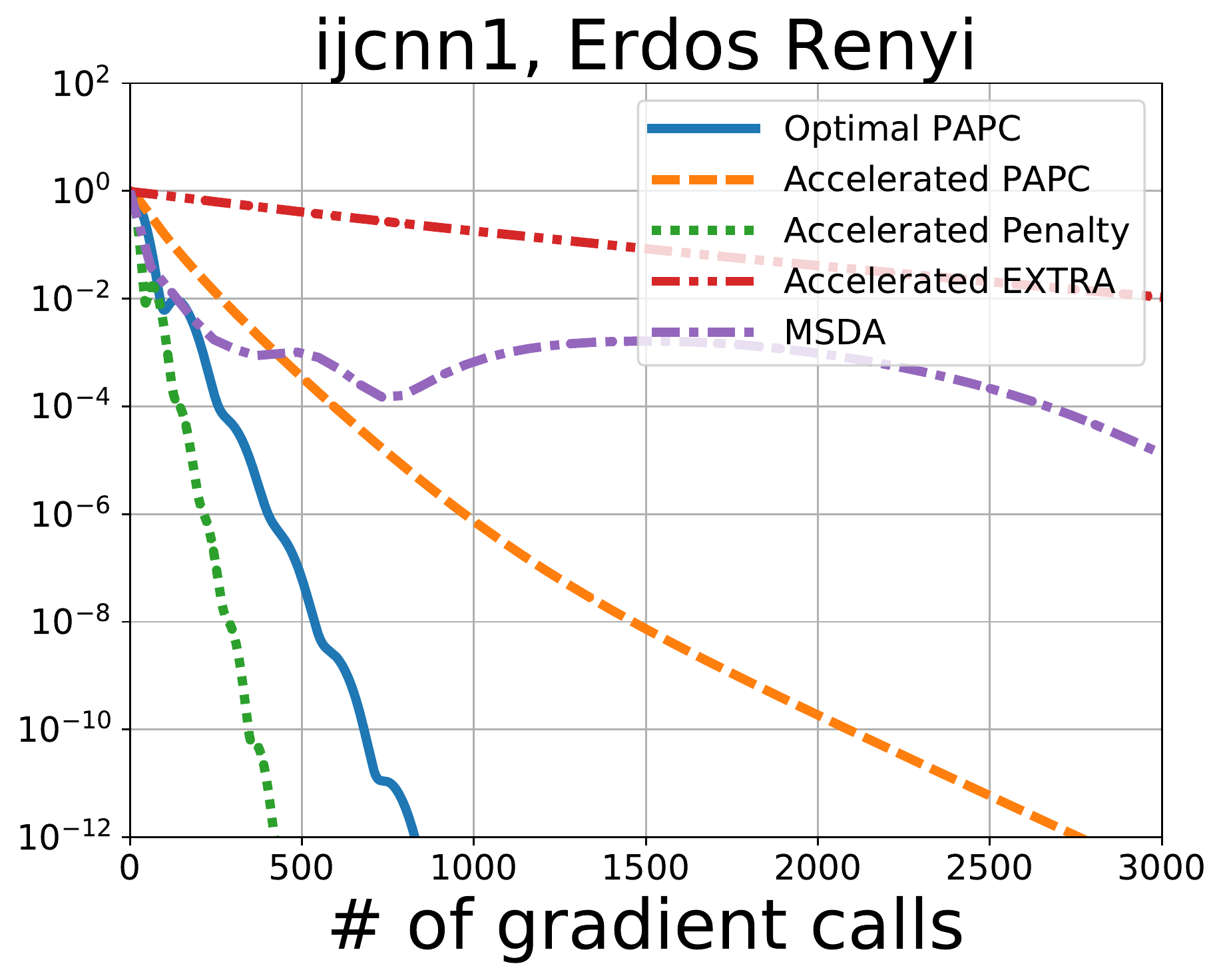}
		\includegraphics[width=.49\textwidth]{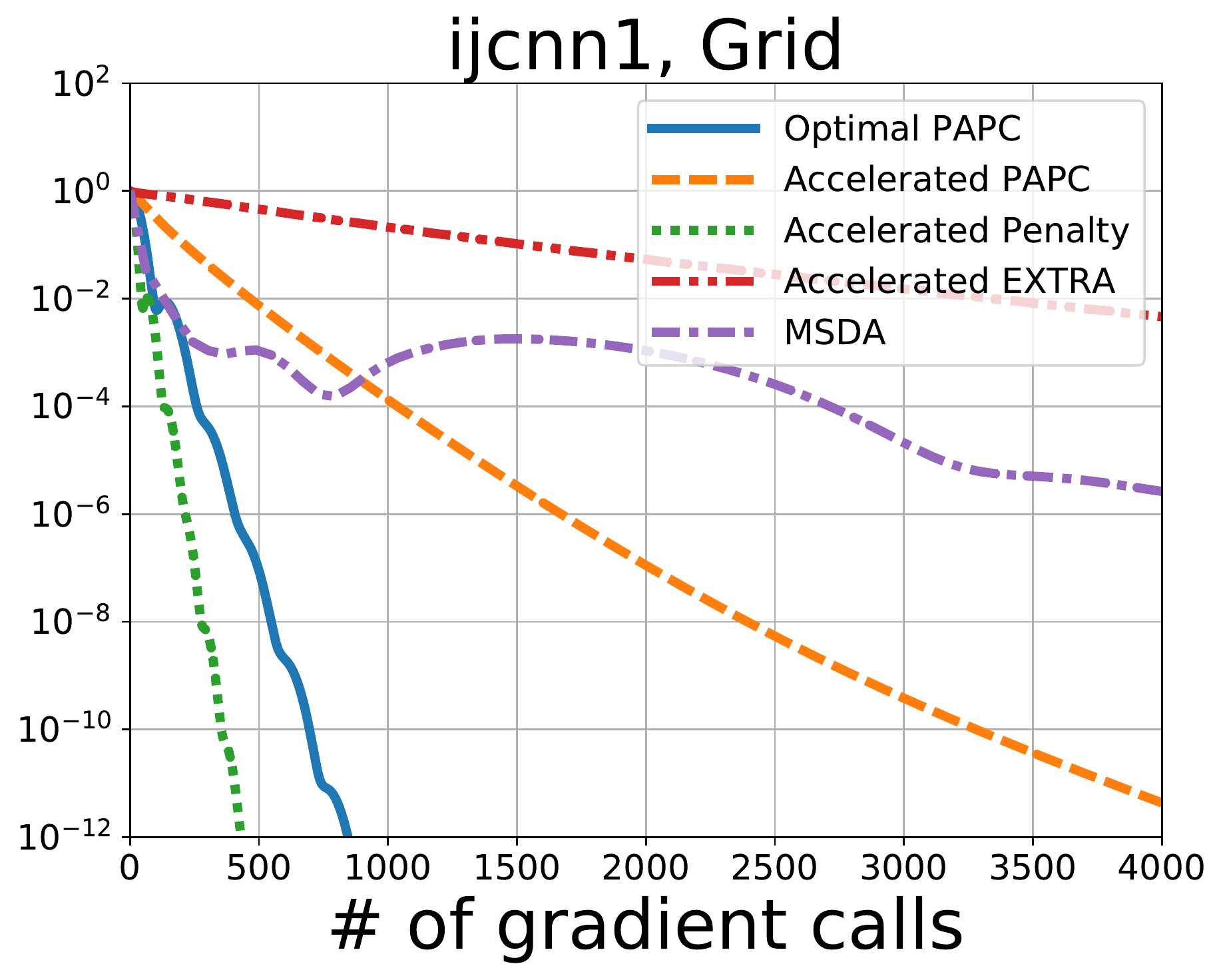}
	\caption{Gradient computation complexity.}
	\label{fig:grad}
	\end{subfigure}
	\caption{Linear convergence of decentralized algorithms in number of communication rounds and gradient computations.}
	\label{fig:sim}
\end{figure}

Figure~\ref{fig:sim} compares Algorithm~\ref{alg:ALV} (Accelerated PAPC) and Algorithm~\ref{alg:ALV-opt} (Optimal PAPC) with three state-of-the-art accelerated benchmarks: Accelerated Penalty \citep{li2018sharp,dvinskikh2019decentralized}, Accelerated Extra \citep{li2020revisiting} and MSDA~\cite{scaman2017optimal}, where we used the subroutine of~\cite{uribe2020dual} to compute the dual gradients. This subroutine uses primal gradients $\nabla f_i$, and the resulting algorithm can be shown to have an optimal communication complexity. We represent the squared distance to the solution as a function of the number of communication rounds and (primal) gradient computations.


The theory developed in this paper concerns the value of the linear rates of the proposed algorithms, i.e., the slope of the curves in Figure~\ref{fig:sim}. In communication complexity, one can see that our Algorithms~\ref{alg:ALV} and~\ref{alg:ALV-opt} have similar rate and perform better than the other benchmarks except MSDA. MSDA performs slightly better in communication complexity. However, MSDA uses dual gradients and has much  higher iteration complexity. In gradient computation complexity, one can see that our main Algorithm~\ref{alg:ALV-opt} is, alongside Accelerated Penalty, the best performing method. Accelerated Penalty performs slightly better in gradient computation complexity. However, the theory of Accelerated Penalty does not predict linear convergence in the number of communication rounds and we see that this algorithm converges sublinearly. Overall, Optimal PAPC is the only universal method which performs well both in communication rounds and gradient computations.

\subsection{Experiments with synthetic data}



In this section, we present additional experiments. The experimental setup is the same as before, with only one difference: we use randomly generated dataset with the following choice of the number of features $d$: 40, 60, 80, 100. The results, which are shown in Figure~\ref{fig:sim2}, are similar to the previous results, and the same conclusions can be made.

\begin{figure}[ht]
	\begin{subfigure}{.5\textwidth}
		\includegraphics[width=.49\textwidth]{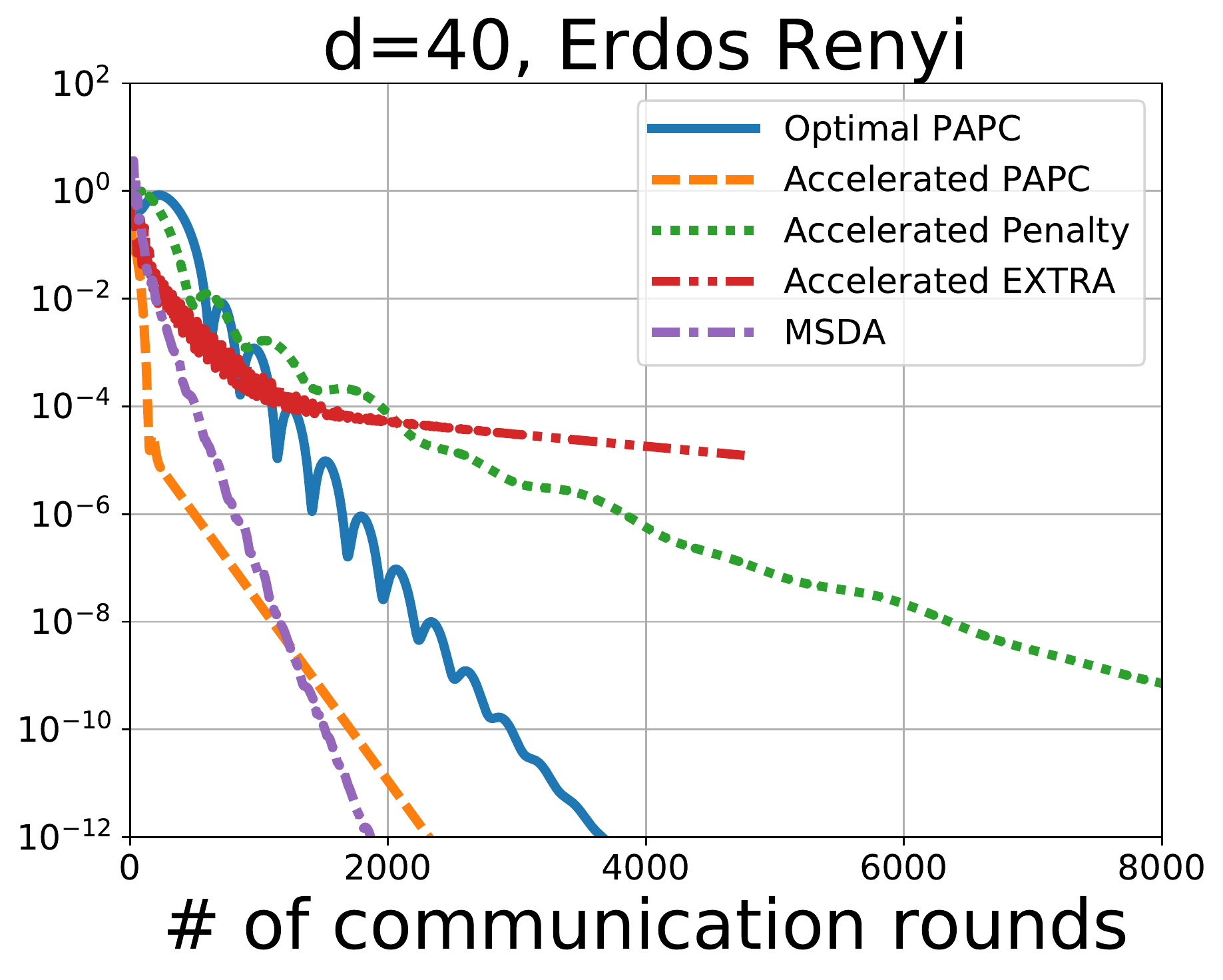}
		\includegraphics[width=.49\textwidth]{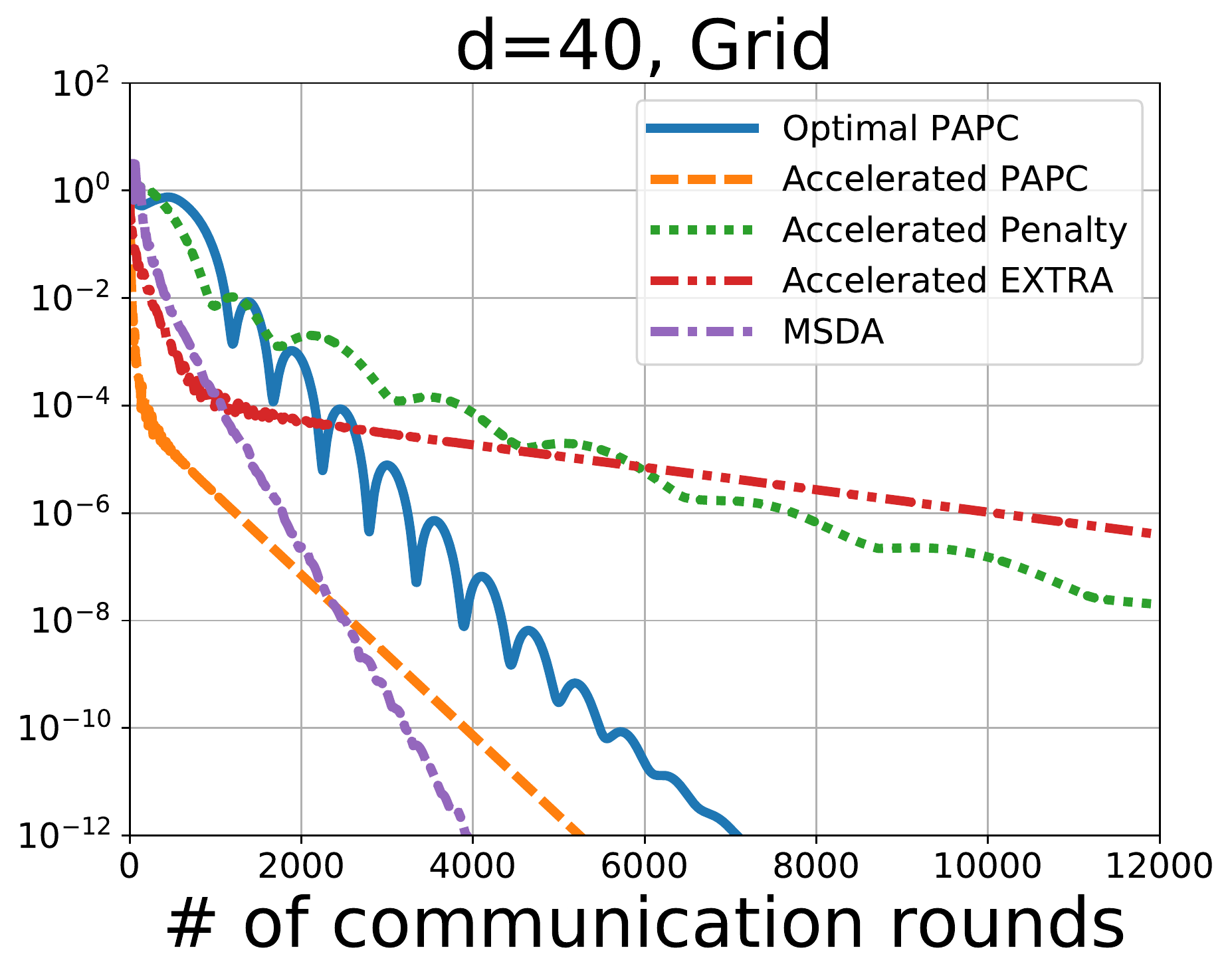}
		\includegraphics[width=.49\textwidth]{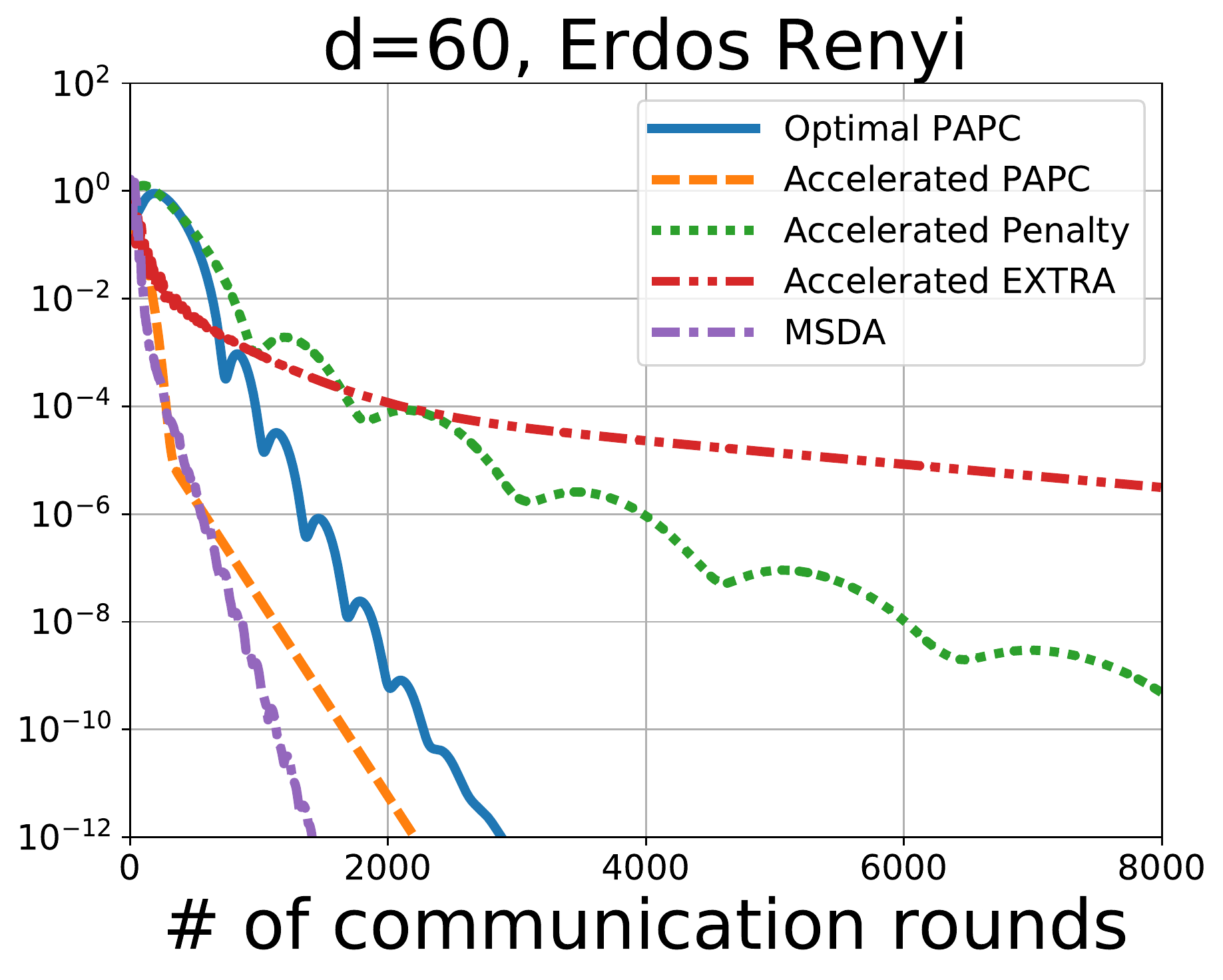}
		\includegraphics[width=.49\textwidth]{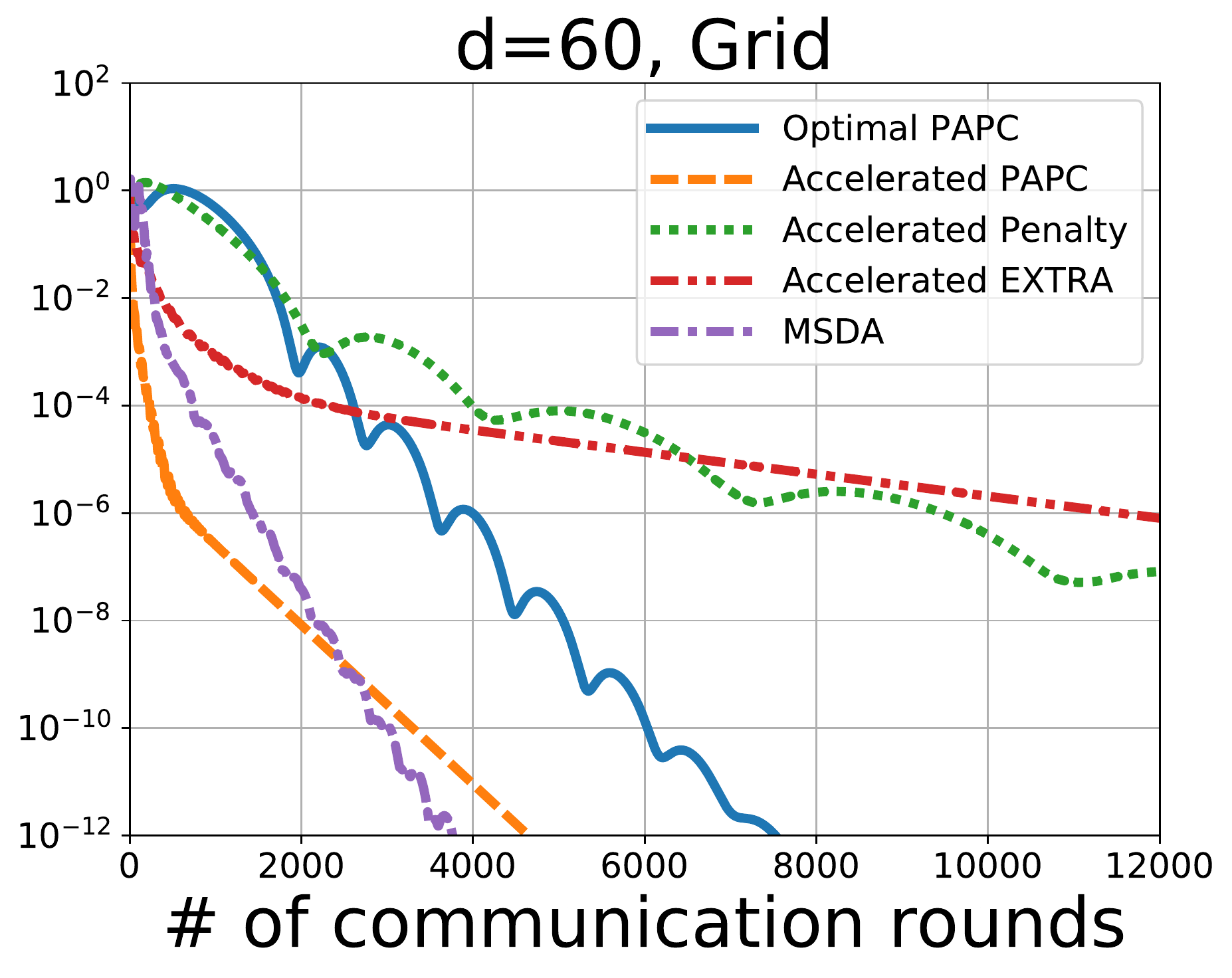}
		\includegraphics[width=.49\textwidth]{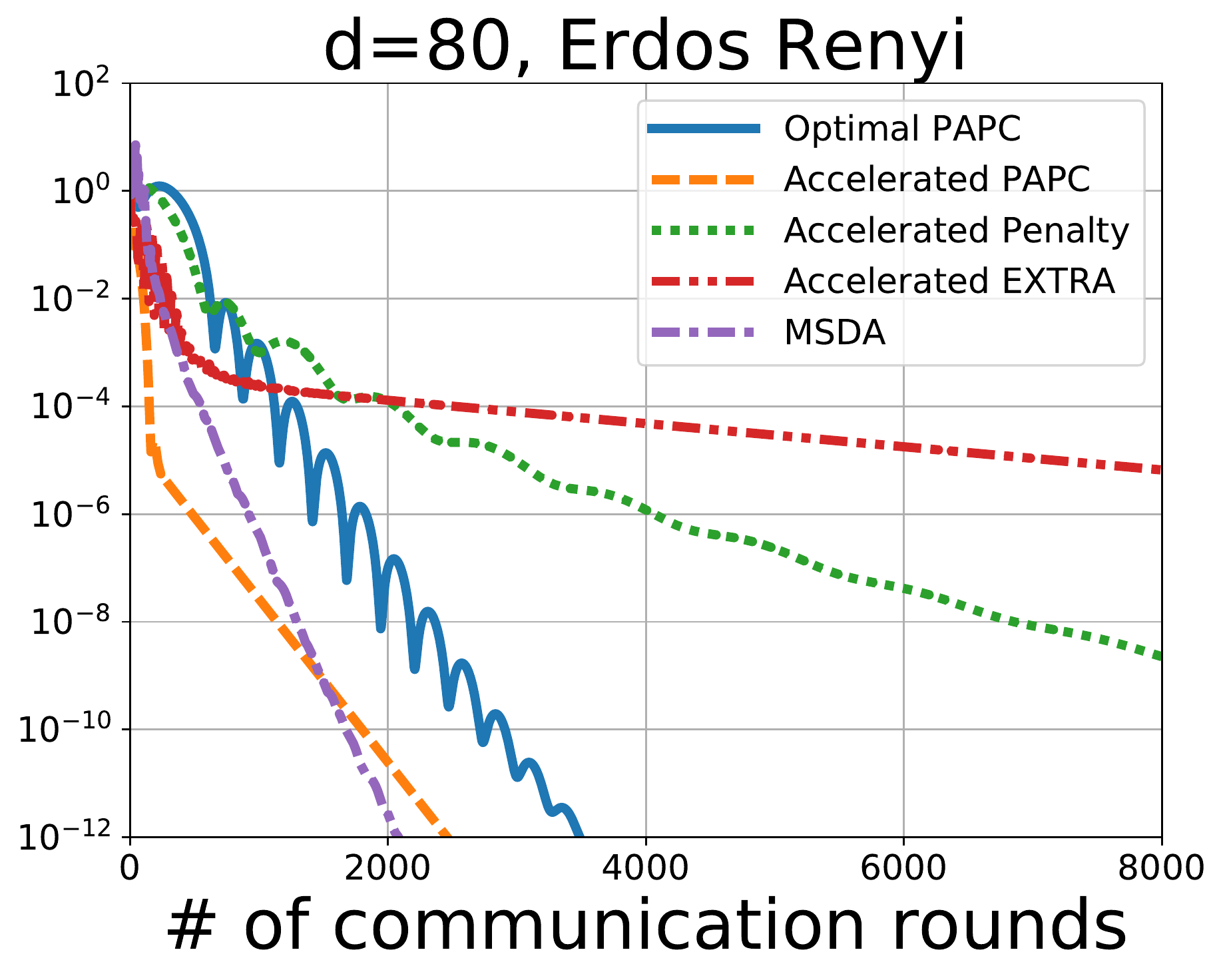}
		\includegraphics[width=.49\textwidth]{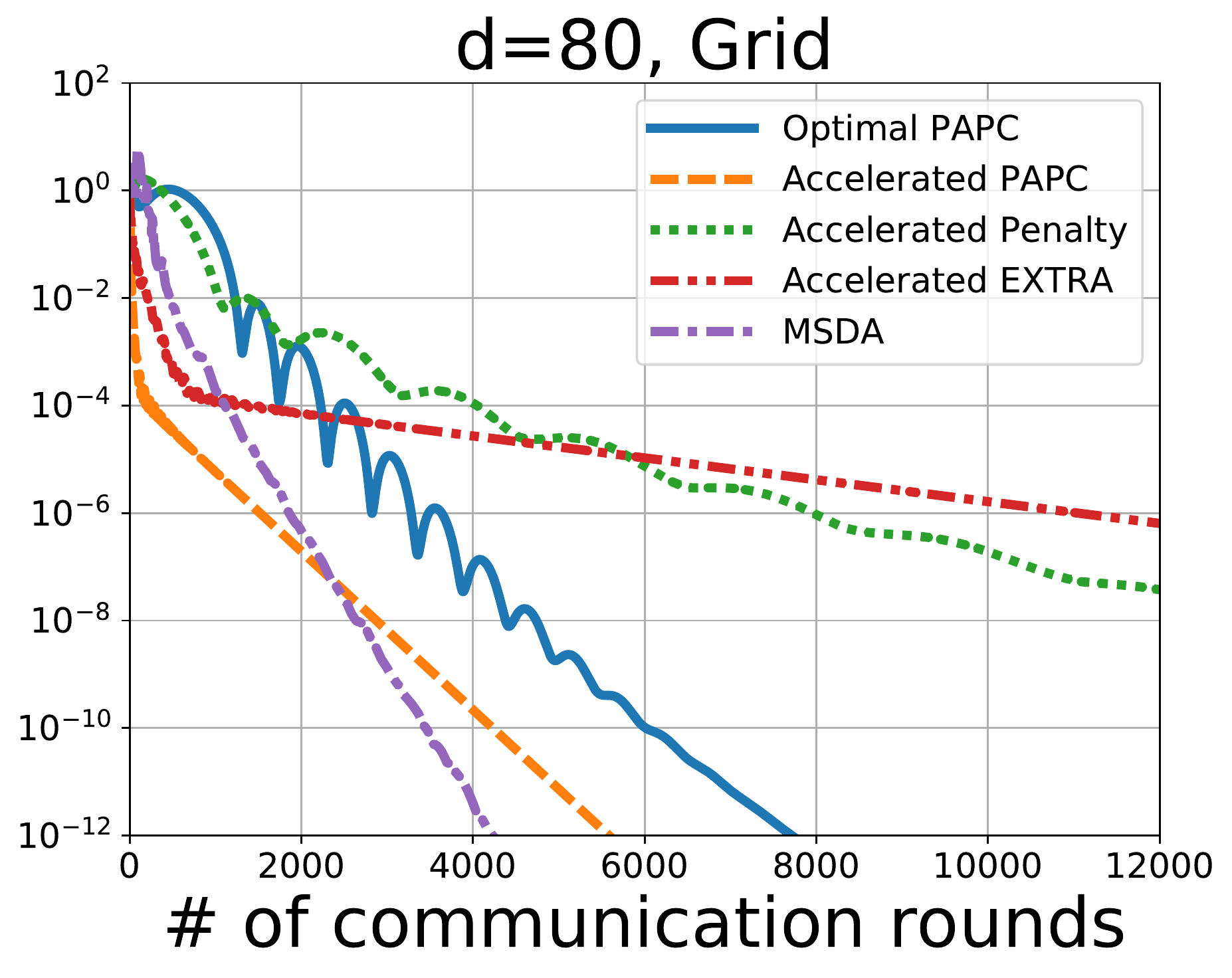}
		\includegraphics[width=.49\textwidth]{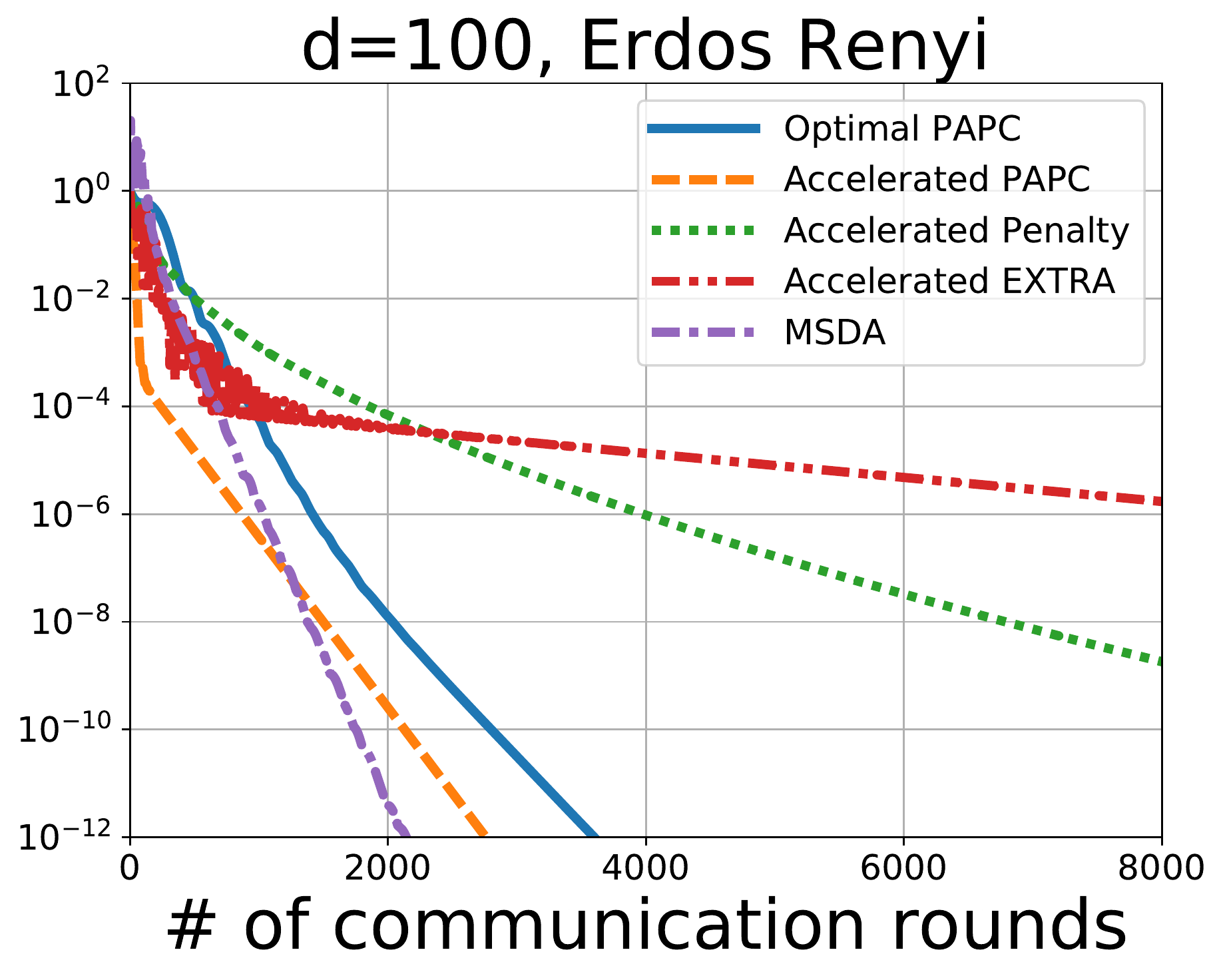}
		\includegraphics[width=.49\textwidth]{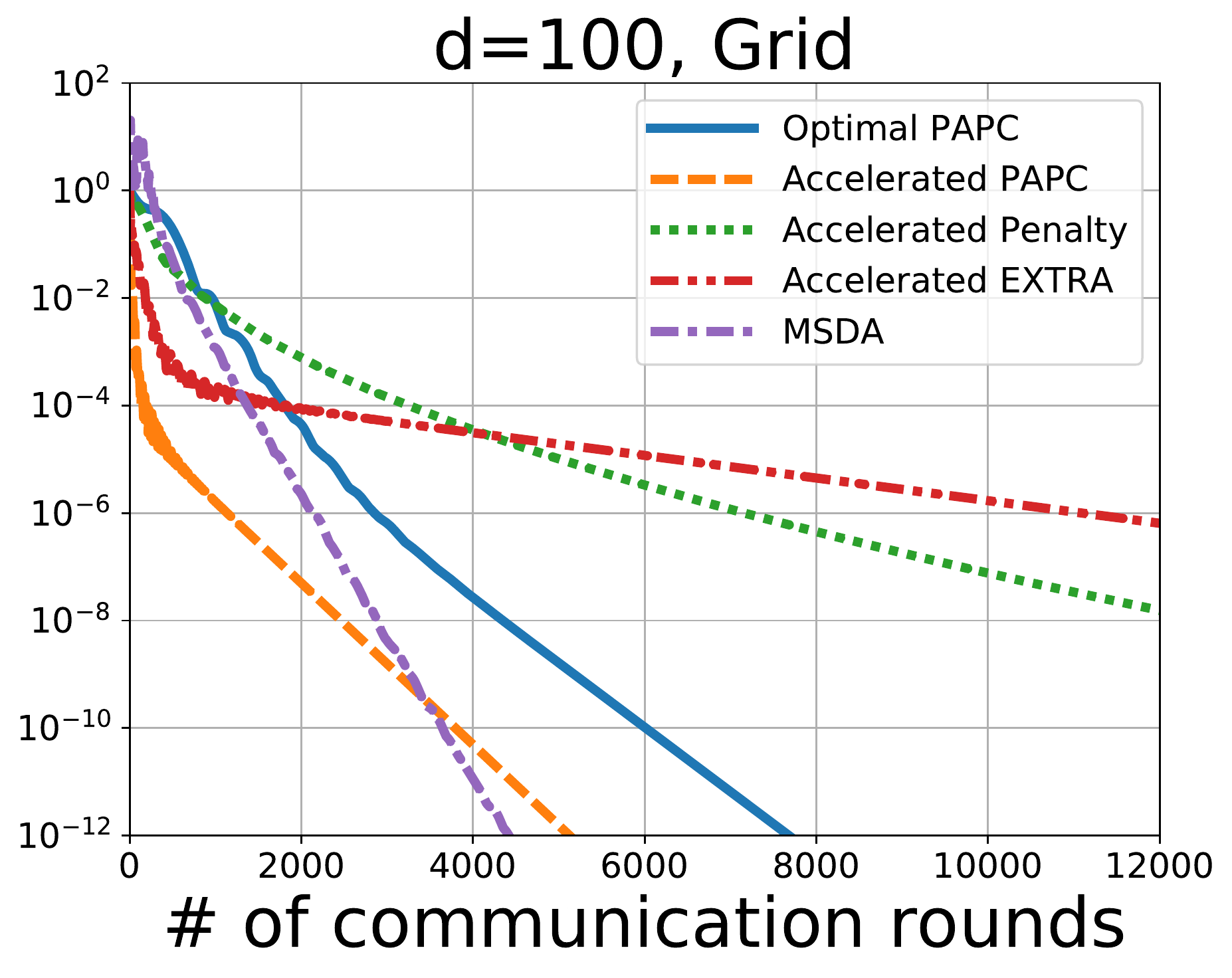}
		\caption{Communication complexity.}
	\end{subfigure}
	\begin{subfigure}{.5\textwidth}
		\includegraphics[width=.49\textwidth]{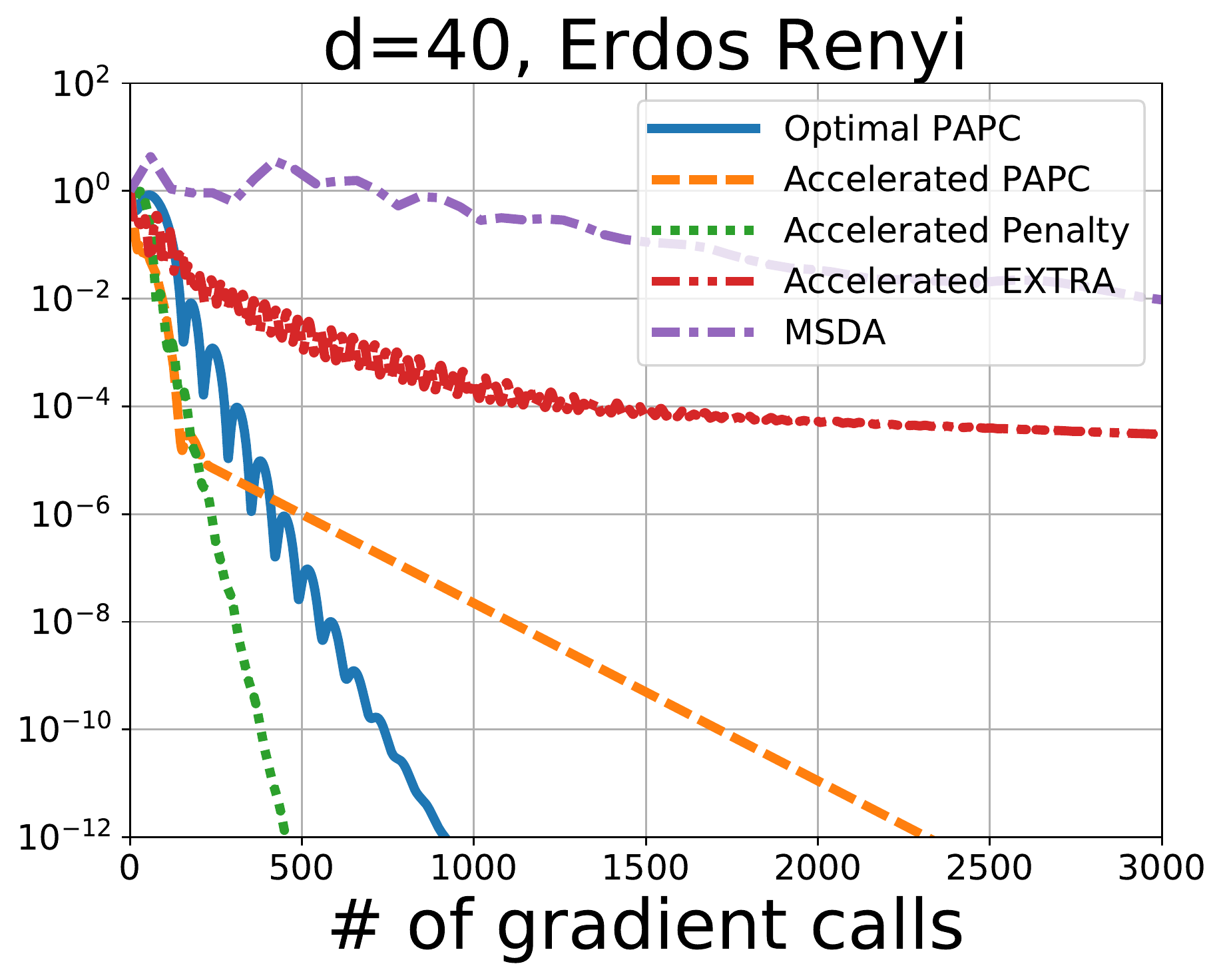}
		\includegraphics[width=.49\textwidth]{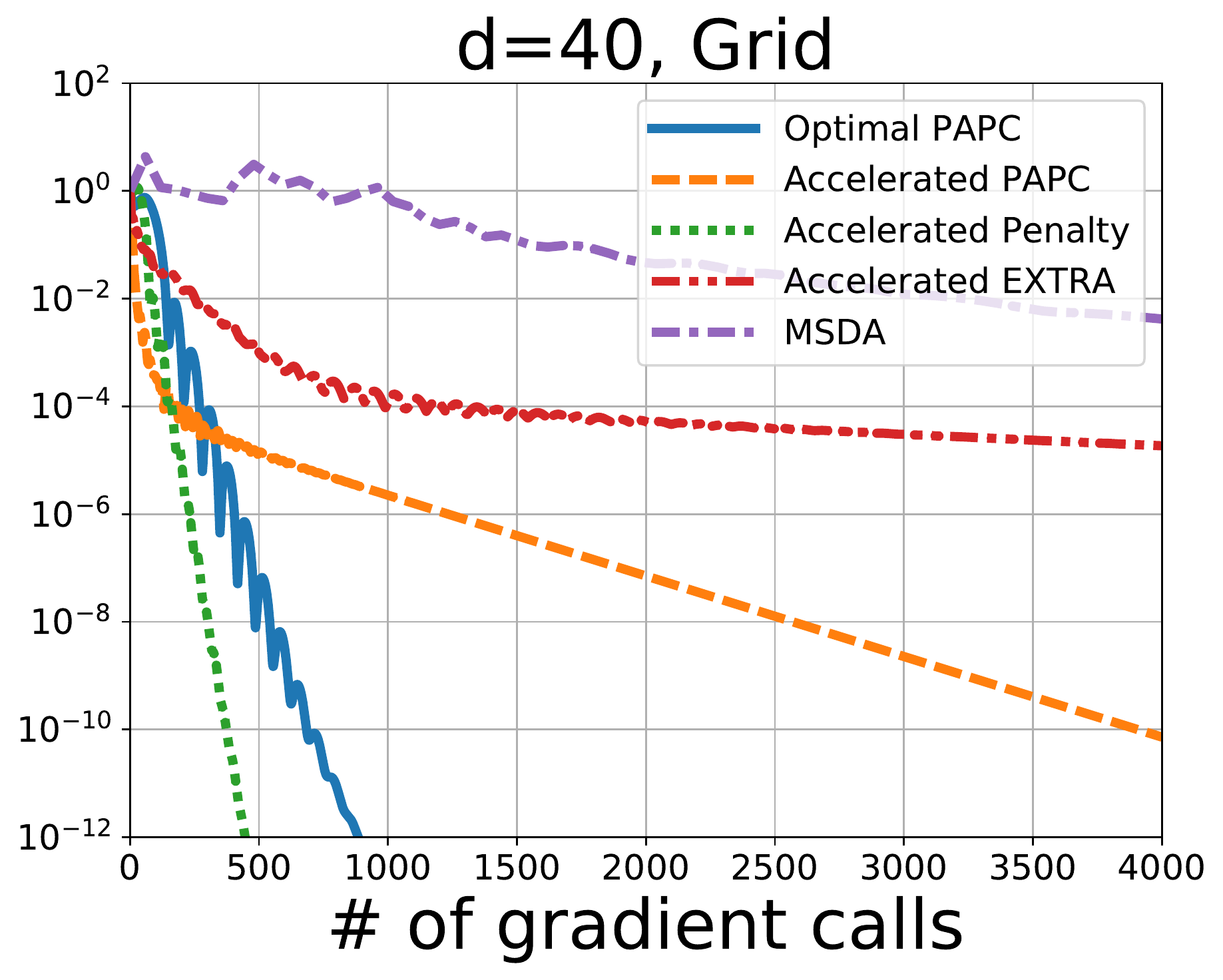}
		\includegraphics[width=.49\textwidth]{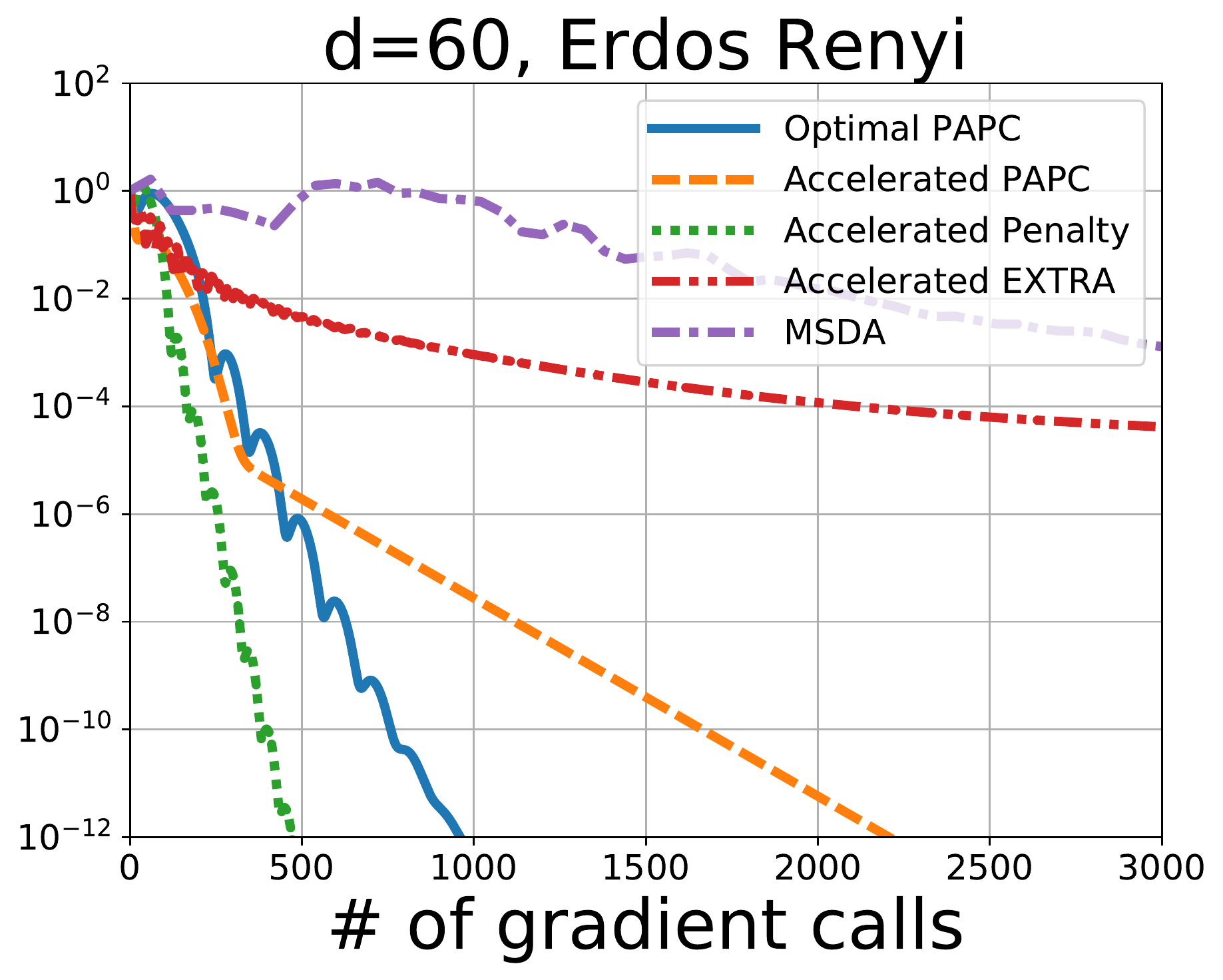}
		\includegraphics[width=.49\textwidth]{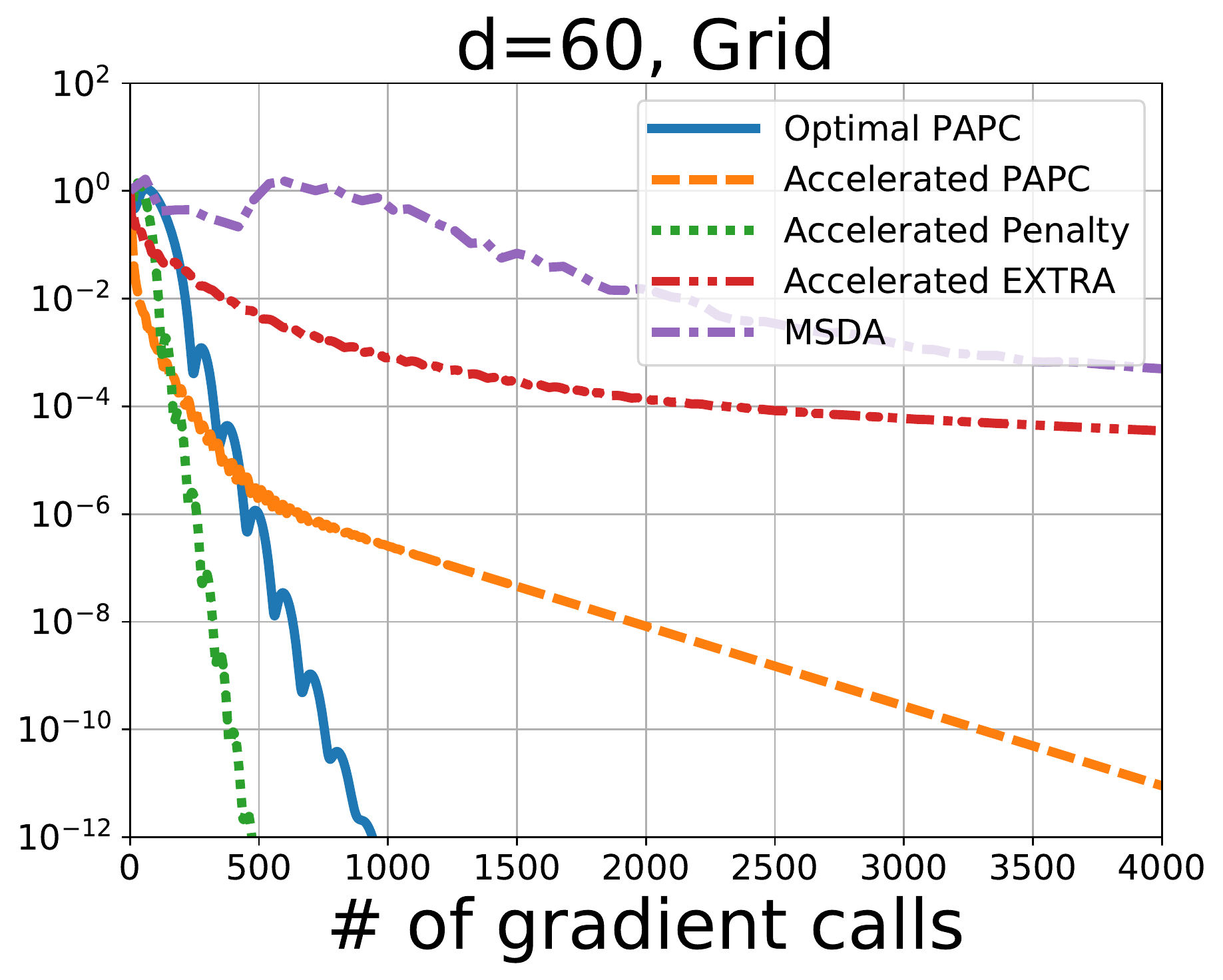}
		\includegraphics[width=.49\textwidth]{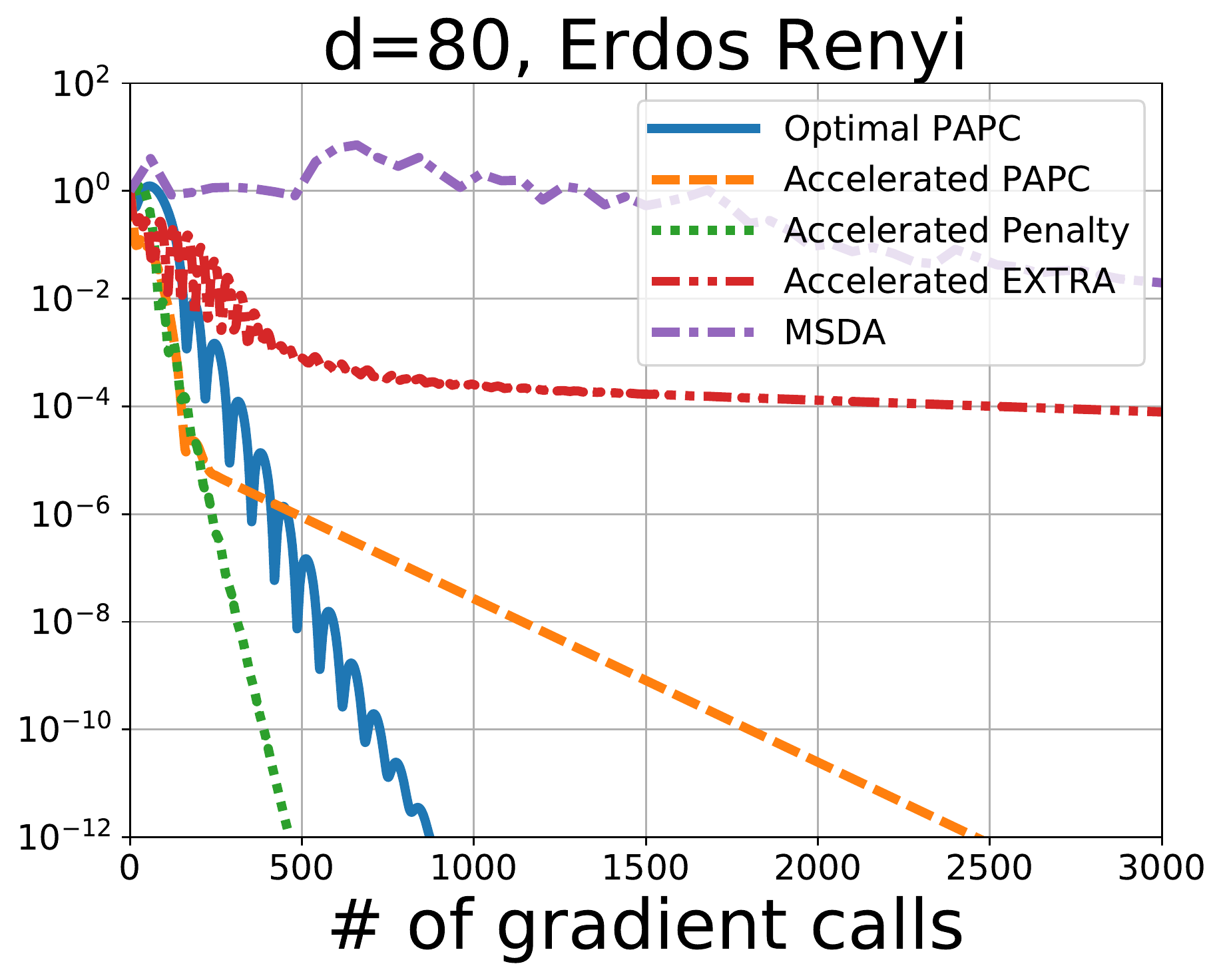}
		\includegraphics[width=.49\textwidth]{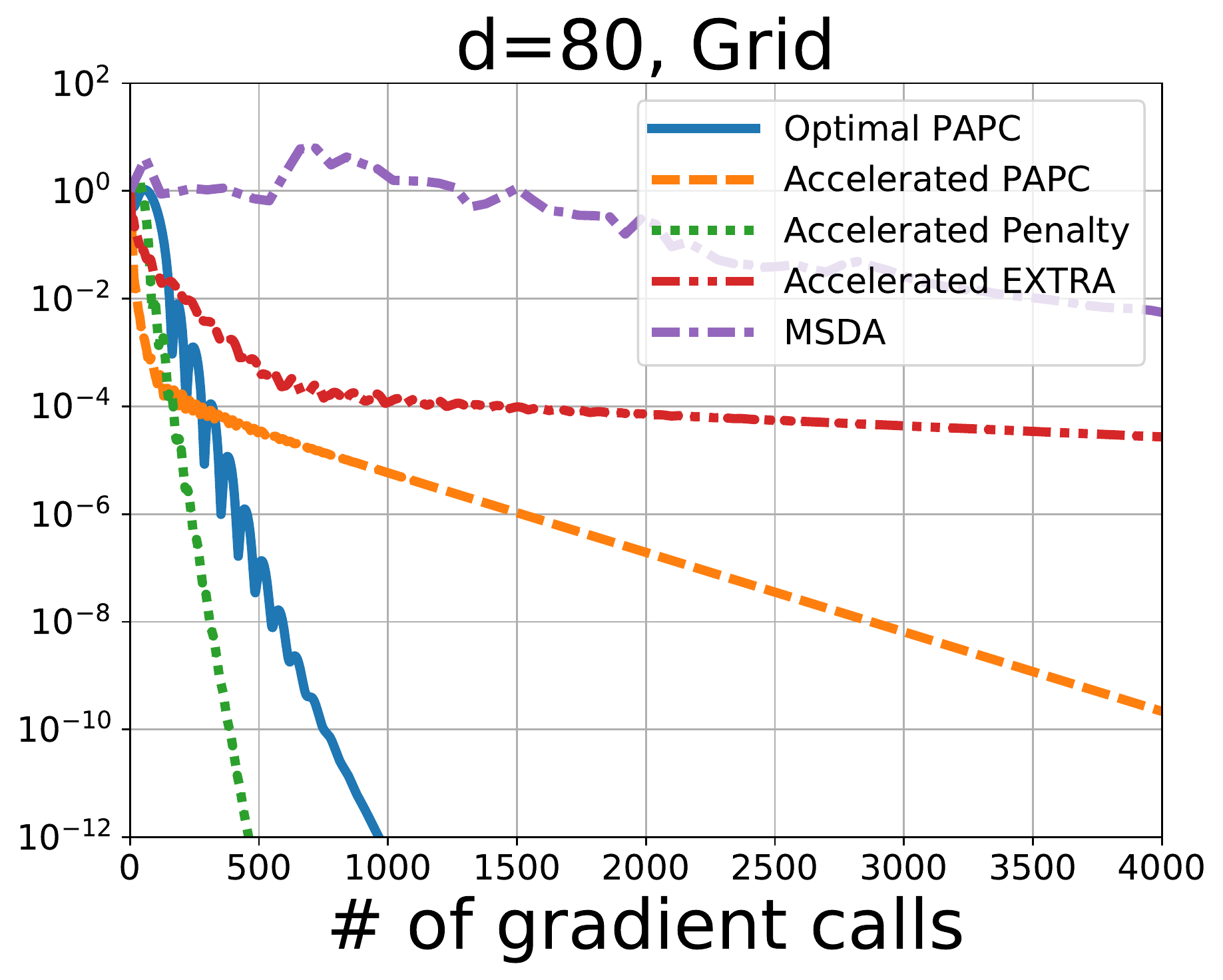}
		\includegraphics[width=.49\textwidth]{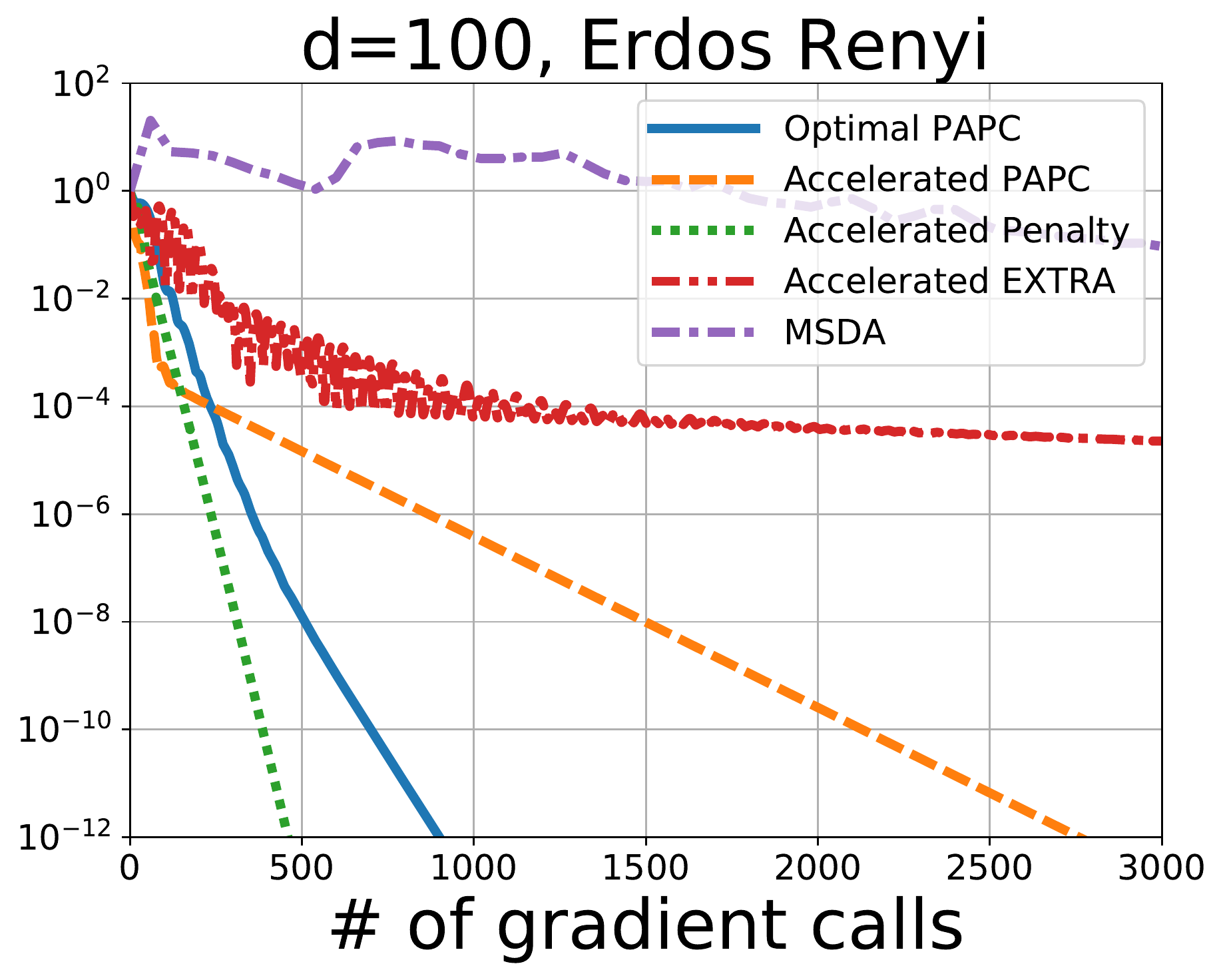}
		\includegraphics[width=.49\textwidth]{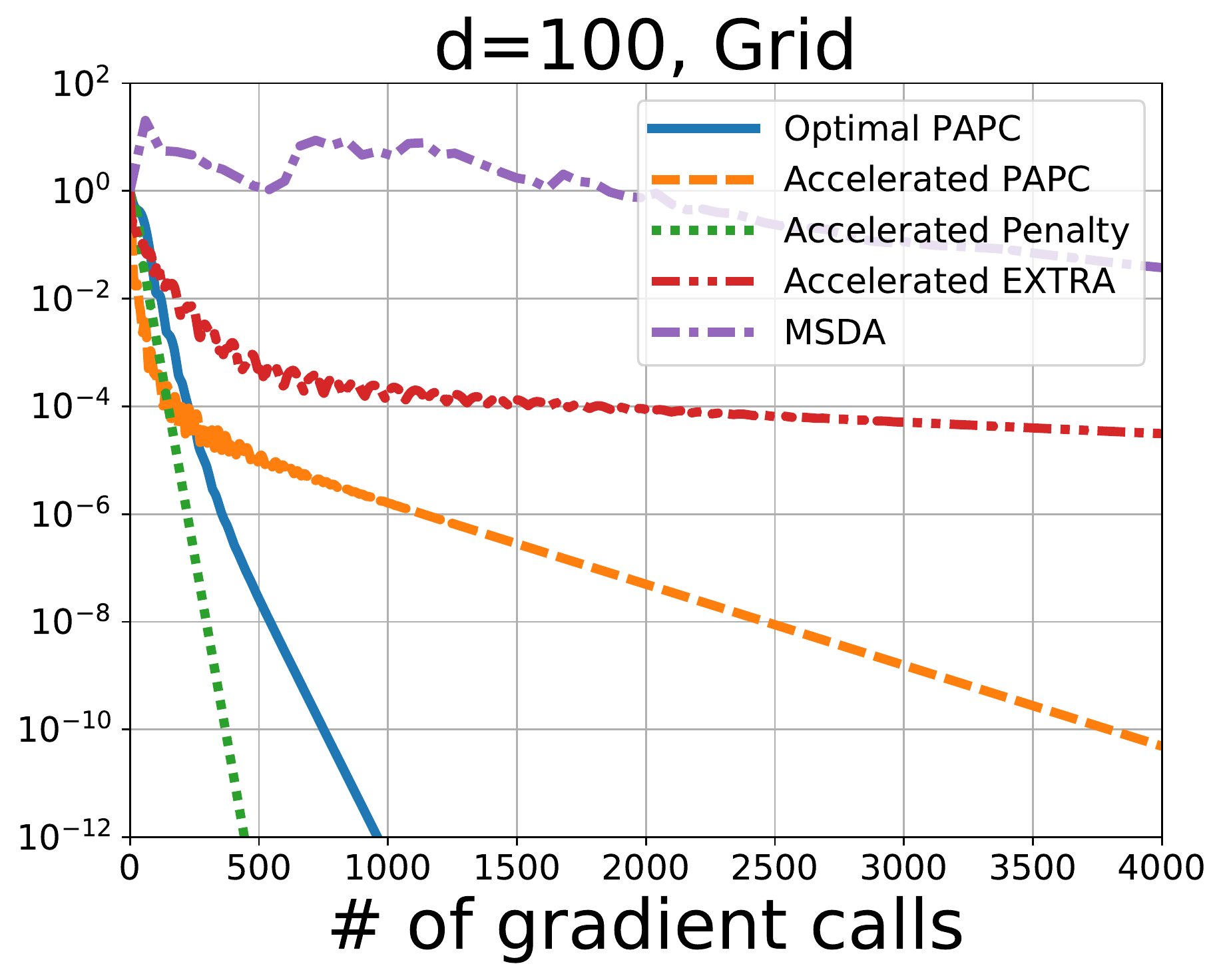}
		\caption{Gradient computation complexity.}
		
	\end{subfigure}
	\caption{Linear convergence of decentralized algorithms in number of communication rounds and gradient computations.}
	\label{fig:sim2}
\end{figure}

	\bibliographystyle{apalike}

\clearpage
\appendix

\part*{Appendix}

\tableofcontents

\clearpage

\newpage

\section{Formal Definition of Decentralized Algorithms}

	In this paper, we considered the resolution of~\eqref{eq:1} distributively across the nodes of the network $G$. Each node $i \in \cV$ is associated with a computing agent that only have access to the local function $f_i$. The goal of the network of computing agent is to minimize the function~\eqref{eq:1} by performing local computations involving $f_i$ at each node $i$ and by communicating vectors along the edges, i.e., with neighbors $j \sim i$.
	
	More precisely, we considered the class of decentralized algorithms, similarly to~\citep[Section 3.1]{scaman2017optimal}. In this paper, a decentralized algorithm is formally defined as an algorithm satisfying the following constraints. At time $k$, each node $i$ possesses a local internal memory $M_i^k \subset \R^d$ and outputs an estimation $x_i^{k} \in M_i^k$ of the solution to Problem~\eqref{eq:1}. This internal memory is updated via gradient computations and communication rounds i.e., $$M_i^{k+1} \subset \Span(\Comm_i^{k+1} \bigcup \Comp_i^{k+1}),$$ where $\Comm_i^{k+1}$ is the communication component and $\Comp_i^{k+1}$ the computation component. The communication component is updated by combining the elements of the local memories of nodes $j \sim i$ at time $k$: $\Comm_i^{k+1} = \Span(\bigcup_{j \sim i} M_j^k)$. The computation component is updated by combining the elements of the local memory of $i$ at time $k$ along with the gradients of the local functions $f_i$ at these elements: $\Comp_i^{k+1} = \Span(\{x, \nabla f_i(x), x \in M_i^k\})$. 
	Compared to the class of black-box optimization procedures of~\citep{scaman2017optimal}, the class of decentralized algorithm is smaller (i.e., included). Indeed, black-box optimization procedures use dual gradients. In other words, they use the following definition of the computation component: $$\widetilde{\Comp}_i^{k+1} = \{x, \nabla f_i(x), \nabla f_i^*(x), x \in M_i^k\}$$ (where $f_i^*$ is the Fenchel transform of $f_i$), which is a set containing $\Comp_i^{k+1}$. Recall that computing the dual gradient $\nabla f_i^*(0)$ is equivalent to minimizing $f_i$.
	
	Finally, as in~\cite{scaman2017optimal}, we say that a decentralized algorithm uses the gossip matrix $\mW$ if the local communication is achieved by multiplication of a vector by $\mW$.

	
%
	


	\newpage

\section{Proof of Theorem~\ref{th:ALV} (APAPC)}

For every $p \geq 0$, we denote by $\|\cdot\|_{\mP}$ the (semi)-norm induced by any positive (semi)-definite matrix $\mP : \R^p \to \R^p$.

\begin{lemma}
	Let $\mP \in \R^{2nd\times 2nd}$ be the following matrix:
	\begin{equation}\label{ALV:eq:P}
		\mP = \begin{bmatrix}
			\frac{1}{\eta}\mI & 0\\
			0&\frac{1}{\theta}\mW^\dagger - (1+\eta\alpha)^{-1}\eta\mI
		\end{bmatrix}.
	\end{equation}
	If parameters $\eta$ and $\theta$ satisfy
	\begin{equation}\label{ALV:eq:etatheta}
		\eta\theta\lambda_{\max}(\mW) \leq 1,
	\end{equation}
	then for all $x \in \R^{nd}$, $y \in \range\mW$ the following inequality holds:
	\begin{equation}\label{ALV:eq:Pbound}
		\frac{1}{\eta}\sqn{x} \leq \sqN{\begin{bmatrix}x\\y\end{bmatrix}}_\mP \leq \frac{1}{\eta}\sqn{x} + \frac{1}{\theta}\sqnw{y}.
	\end{equation}
\end{lemma}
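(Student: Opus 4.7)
The plan is to directly expand the $\mP$-seminorm using the block-diagonal structure of $\mP$ and then bound the two sides separately; no clever idea is required beyond carefully using the constraint $y \in \range\mW$.

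First I would use the block-diagonal form of $\mP$ and the definition $\sqnw{y} = \Dotprod{\mW^\dagger y, y}$ (valid because $y \in \range \mW$) to write
\begin{equation}
\sqN{\begin{bmatrix}x\\y\end{bmatrix}}_\mP
= \frac{1}{\eta}\sqn{x} + \frac{1}{\theta}\sqnw{y} - \frac{\eta}{1+\eta\alpha}\sqn{y}.
\end{equation}
The upper bound in~\eqref{ALV:eq:Pbound} is then immediate, since the last term is nonpositive.

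For the lower bound, it suffices to prove that $\frac{1}{\theta}\sqnw{y} \geq \frac{\eta}{1+\eta\alpha}\sqn{y}$. The key observation is that the positive eigenvalues of $\mW^\dagger$ are the reciprocals of the positive eigenvalues of $\mW$, so restricted to $\range\mW$ one has $\mW^\dagger \succeq \frac{1}{\lambda_{\max}(\mW)}\mI$; equivalently, $\sqnw{y} \geq \sqn{y}/\lambda_{\max}(\mW)$ for every $y \in \range\mW$. Combining this with the standing assumption $\eta\theta\lambda_{\max}(\mW)\leq 1$ yields
\begin{equation}
\frac{1}{\theta}\sqnw{y} \geq \frac{1}{\theta\lambda_{\max}(\mW)}\sqn{y} \geq \eta\sqn{y} \geq \frac{\eta}{1+\eta\alpha}\sqn{y},
\end{equation}
where the last step uses $\alpha \geq 0$ and $\eta > 0$.

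There is no real obstacle here; the only subtlety is remembering that $\sqnw{y}$ is defined only on $\range\mW$ and that $1/\lambda_{\max}(\mW)$, not $1/\lambda_{\min}^+(\mW)$, is the correct lower eigenvalue bound for $\mW^\dagger$ on $\range\mW$. Once that is in place the two inequalities are essentially a one-line calculation each, and in particular the lower bound also shows that $\mP$ is positive semidefinite on $\R^{nd}\times \range\mW$, so that the notation $\|\cdot\|_\mP$ is legitimate.
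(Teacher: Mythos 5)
Your proof is correct and follows the same route as the paper: the paper's proof simply asserts that $\frac{1}{\theta}\mW^\dagger - (1+\eta\alpha)^{-1}\eta\mI$ is positive semi-definite on $\range\mW$ under \eqref{ALV:eq:etatheta}, which is exactly the fact you verify via $\mW^\dagger \succeq \lambda_{\max}(\mW)^{-1}\mI$ on $\range\mW$ together with $\eta\theta\lambda_{\max}(\mW)\leq 1$ and $1+\eta\alpha\geq 1$. You supply the details the paper omits, but there is no difference in approach.
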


\begin{proof}
	Note that under our assumptions, the matrix $\frac{1}{\theta}\mW^\dagger - (1+\eta\alpha)^{-1}\eta\mI$ is positive semi-definite on $\range \mW$.
\end{proof}

\begin{lemma}\label{ALV:lem:1}
	Let $\alpha$ satisfy $0\leq \alpha \leq \mu$.
	Then the following inequality holds:
	\begin{align}\label{ALV:eq:1}
		-\frac{1}{2\eta}\sqn{x^{k+1} - x^k} \leq -\frac{\eta}{4}\sqn{y^{k+1} - y^*}
		+
		\eta\alpha^2\sqn{x^{k+1} - x^*}
		+
		2\eta L\bg_{f }(x_g^k, x^*).
	\end{align}
\end{lemma}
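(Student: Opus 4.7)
The plan is to start from Line~\ref{alg:ALV:line:x:3} of Algorithm~\ref{alg:ALV} and rewrite the displacement $x^{k+1}-x^k$ in a form amenable to Young's inequality. Multiplying Line~\ref{alg:ALV:line:x:3} by $(1+\eta\alpha)$ and rearranging gives
\begin{equation}
\frac{x^{k+1}-x^k}{\eta}
= -\nabla F(x_g^k) - \alpha(x^{k+1}-x_g^k) - y^{k+1}.
\end{equation}
Using the optimality condition $\nabla F(x^*)+y^*=0$ from \eqref{eq:primaldualoptimal} and the splitting $x^{k+1}-x_g^k = (x^{k+1}-x^*) - (x_g^k - x^*)$, this becomes
\begin{equation}
\frac{x^{k+1}-x^k}{\eta}
= -\bigl[(\nabla F(x_g^k)-\nabla F(x^*)) - \alpha(x_g^k-x^*)\bigr] - \alpha(x^{k+1}-x^*) - (y^{k+1}-y^*).
\end{equation}

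Next I would denote the three summands above by $A$, $B$, $C$ and apply the standard inequality $\|A+B+C\|^2 \ge \tfrac12\|C\|^2 - \|A+B\|^2 \ge \tfrac12\|C\|^2 - 2\|A\|^2 - 2\|B\|^2$. The term $\|B\|^2 = \alpha^2\sqn{x^{k+1}-x^*}$ is already in the desired form, while $\|C\|^2 = \sqn{y^{k+1}-y^*}$ produces the $-\tfrac{\eta}{4}\sqn{y^{k+1}-y^*}$ on the right-hand side after multiplying by $\eta/2$. For $\|A\|^2$ I would use the assumption $\alpha\le \mu$: the function $G\eqdef F - \tfrac{\alpha}{2}\|\cdot\|^2$ is convex and $(L-\alpha)$-smooth, so the classical smoothness bound $\|\nabla G(x)-\nabla G(y)\|^2 \le 2(L-\alpha)\bg_G(x,y)$ gives
\begin{equation}
\|A\|^2 \le 2(L-\alpha)\bg_G(x_g^k,x^*) = 2(L-\alpha)\bigl[\bg_F(x_g^k,x^*) - \tfrac{\alpha}{2}\sqn{x_g^k-x^*}\bigr] \le 2L\,\bg_F(x_g^k,x^*).
\end{equation}
Collecting these bounds, multiplying by $\eta/2$, and negating yields exactly~\eqref{ALV:eq:1}.

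\paragraph{Main obstacle.} The main subtlety is the appearance of $\sqn{x^{k+1}-x^*}$ (rather than the more immediate $\sqn{x^{k+1}-x_g^k}$) on the right-hand side. The naive decomposition of $\alpha(x^{k+1}-x_g^k)$ produces the wrong quadratic, and the fix is precisely to absorb the cross term $-\alpha(x_g^k-x^*)$ into the gradient difference, recognizing it as $\nabla G(x_g^k)-\nabla G(x^*)$ for the strongly-convex-shifted function $G$. This is where the hypothesis $\alpha\le\mu$ is crucial: without it, $G$ need not be convex and the smoothness-to-Bregman inequality used to bound $\|A\|^2$ would fail. Once this observation is made, the remainder is Young's inequality and algebra.
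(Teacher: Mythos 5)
Your proof is correct and follows essentially the same route as the paper: rewrite $x^{k+1}-x^k$ via line~\ref{alg:ALV:line:x:3} and the optimality condition $\nabla F(x^*)+y^*=0$, split off the term $\nabla F(x_g^k)-\nabla F(x^*)-\alpha(x_g^k-x^*)$, apply $\sqn{A+B+C}\ge\tfrac12\sqn{C}-2\sqn{A}-2\sqn{B}$, and bound $\sqn{A}$ by $2(L-\alpha)\bg_{F-\frac{\alpha}{2}\sqn{\cdot}}(x_g^k,x^*)\le 2L\,\bg_F(x_g^k,x^*)$ using convexity and $(L-\alpha)$-smoothness of $F-\tfrac{\alpha}{2}\sqn{\cdot}$ under the hypothesis $\alpha\le\mu$. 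This is exactly the paper's argument, so nothing further is needed.
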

\begin{proof}
	From line \eqref{alg:ALV:line:x:3} of Algorithm~\ref{alg:ALV} and optimality condition \eqref{eq:primaldualoptimal} it follows that
	\begin{align*}
		\sqn{x^{k+1} - x^k}
		&=
		\sqn{\eta(y^{k+1} - y^*) + \eta(\nabla F(x_g^k) - \nabla F(x^*) - \alpha (x_g^k - x^*)) + \eta\alpha(x^{k+1} - x^*)}
		\\&\geq
		\frac{\eta^2}{2}\sqn{y^{k+1} - y^*}
		-
		2\eta^2\alpha^2\sqn{x^{k+1} - x^*}
		\\&-
		2\eta^2\sqn{\nabla F(x_g^k) - \nabla F(x^*) - \alpha (x_g^k - x^*)}.
	\end{align*}
	Since $f(x)  - \frac{\alpha}{2}\sqn{x}$ is a convex and $(L-\alpha)$-smooth function, we can lower bound the last term and get
	\begin{align*}
	\sqn{x^{k+1} - x^k}
	&=
	\sqn{\eta(y^{k+1} - y^*) + \eta(\nabla F(x_g^k) - \nabla F(x^*) - \alpha (x_g^k - x^*)) + \eta\alpha(x^{k+1} - x^*)}
	\\&\geq
	\frac{\eta^2}{2}\sqn{y^{k+1} - y^*}
	-
	2\eta^2\alpha^2\sqn{x^{k+1} - x^*}
	-
	4\eta^2(L - \alpha)\bg_{f - \frac{\alpha}{2}\sqn{\cdot}}(x_g^k, x^*)
	\\&\geq
	\frac{\eta^2}{2}\sqn{y^{k+1} - y^*}
	-
	2\eta^2\alpha^2\sqn{x^{k+1} - x^*}
	-
	4\eta^2L\bg_{f }(x_g^k, x^*).
	\end{align*}
	Rearranging and dividing by $2\eta$ concludes the proof.
\end{proof}

\begin{lemma}
	Let $\mP$ be the matrix defined by \eqref{ALV:eq:P}:
	\begin{equation*}
		\mP = \begin{bmatrix}
		\frac{1}{\eta}\mI & 0\\
		0&\frac{1}{\theta}\mW^\dagger - (1+\eta\alpha)^{-1}\eta\mI
		\end{bmatrix}.\tag{\ref{ALV:eq:P}}
	\end{equation*}
	Then the following equality holds:
	\begin{equation}\label{ALV:eq:xy}
		\mP \cdot \begin{bmatrix}
		x^{k+1} - x^k\\y^{k+1} - y^k
		\end{bmatrix}
		=
		\begin{bmatrix}
			\alpha (x_g^k - x^{k+1}) - (\nabla F(x_g^k) + y^{k+1})\\
			\mW\mW^\dagger x^{k+1}
		\end{bmatrix}.
	\end{equation}
\end{lemma}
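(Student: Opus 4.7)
The plan is a direct algebraic verification of each coordinate of the claimed identity, using the explicit updates of Algorithm~\ref{alg:ALV} on lines \ref{alg:ALV:line:x:2}, \ref{alg:ALV:line:y}, and \ref{alg:ALV:line:x:3}, together with the structural fact that $\mW^\dagger$ inverts $\mW$ on $\range\mW$. Since $\mP$ is block diagonal, the two rows decouple.

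For the first (primal) row, I would start from line \ref{alg:ALV:line:x:3}, which reads
\[
(1+\eta\alpha)\,x^{k+1} = x^k - \eta\bigl(\nabla F(x_g^k) - \alpha x_g^k + y^{k+1}\bigr),
\]
and rearrange to isolate $x^{k+1}-x^k$ on one side. Dividing by $\eta$ gives
\[
\frac{1}{\eta}(x^{k+1} - x^k) = \alpha(x_g^k - x^{k+1}) - \bigl(\nabla F(x_g^k) + y^{k+1}\bigr),
\]
which is exactly the first component of the claim. This step is essentially bookkeeping.

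For the second (dual) row, I would use line \ref{alg:ALV:line:y} to write $\tfrac{1}{\theta}(y^{k+1}-y^k) = \mW x^{k+1/2}$, and apply $\mW^\dagger$. Since $\mW x^{k+1/2} \in \range\mW$ and $\mW^\dagger\mW$ acts as the identity on $\range\mW$ (the orthogonal projection onto $\range\mW$), I obtain
\[
\tfrac{1}{\theta}\mW^\dagger(y^{k+1}-y^k) = \mW^\dagger\mW\, x^{k+1/2}.
\]
To rewrite this in terms of $x^{k+1}$, I would subtract lines \ref{alg:ALV:line:x:2} and \ref{alg:ALV:line:x:3} to get the clean identity
\[
x^{k+1/2} - x^{k+1} = \tfrac{\eta}{1+\eta\alpha}(y^{k+1}-y^k),
\]
and apply $\mW^\dagger\mW$ to both sides. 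Since $y^{k+1}-y^k \in \range\mW$, the projection $\mW^\dagger\mW$ fixes it, yielding $\mW^\dagger\mW\, x^{k+1/2} = \mW^\dagger\mW\, x^{k+1} + \tfrac{\eta}{1+\eta\alpha}(y^{k+1}-y^k)$. Substituting back gives precisely
\[
\Bigl(\tfrac{1}{\theta}\mW^\dagger - \tfrac{\eta}{1+\eta\alpha}\mI\Bigr)(y^{k+1}-y^k) = \mW\mW^\dagger x^{k+1},
\]
where I also use the symmetric equality $\mW^\dagger\mW = \mW\mW^\dagger$ (both equal the orthogonal projector onto $\range\mW$, since $\mW$ is symmetric).

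The only mildly delicate step is the second row: one must be careful that $\mW^\dagger$ is only defined on $\range\mW$, and the manipulation hinges on $y^{k+1}-y^k$ lying in $\range\mW$ (which holds by induction, given $y^0 \in \range\mW$ and the update $y^{k+1}-y^k = \theta\mW x^{k+1/2}$). Once this is observed, the computation is a one-line rearrangement, so I expect no real obstacle beyond this care with domains.
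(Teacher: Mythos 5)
Your proof is correct and follows essentially the same route as the paper's: the first row is read off directly from line \ref{alg:ALV:line:x:3}, and the second row combines line \ref{alg:ALV:line:y} with the identity $x^{k+1} = x^{k+1/2} + (1+\eta\alpha)^{-1}\eta(y^k - y^{k+1})$ and the fact that $y^{k+1}-y^k \in \range\mW$ is fixed by the projector $\mW\mW^\dagger$. The only cosmetic difference is the order in which you apply the projector and substitute, so nothing further is needed.
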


\begin{proof}
	From the definition of $\mP$ it follows that
	\begin{align*}
	\mP \cdot \begin{bmatrix}
	x^{k+1} - x^k\\y^{k+1} - y^k
	\end{bmatrix}
	=
	\begin{bmatrix}
	\frac{1}{\eta}(x^{k+1} - x^k)\\
	\frac{1}{\theta}\mW^\dagger(y^{k+1} - y^k) - (1+\eta\alpha)^{-1}\eta(y^{k+1} - y^k)
	\end{bmatrix}.
	\end{align*}
	From line \eqref{alg:ALV:line:x:3} of Algorithm~\ref{alg:ALV} it follows that
	\begin{align*}
		\frac{1}{\eta}(x^{k+1} - x^k) = \alpha(x_g^k - x^{k+1})-(\nabla F(x_g^k) + y^{k+1}),
	\end{align*}
	and hence,
	\begin{align*}
	\mP \cdot \begin{bmatrix}
	x^{k+1} - x^k\\y^{k+1} - y^k
	\end{bmatrix}
	=
	\begin{bmatrix}
	\alpha(x_g^k - x^{k+1})-(\nabla F(x_g^k) + y^{k+1})\\
	\frac{1}{\theta}\mW^\dagger(y^{k+1} - y^k) - (1+\eta\alpha)^{-1}\eta(y^{k+1} - y^k)
	\end{bmatrix}.
	\end{align*}
	From line \eqref{alg:ALV:line:y} of Algorithm~\ref{alg:ALV} it follows that
	\begin{align*}
		y^{k+1} - y^k = \theta\mW x^{k+1/2},
	\end{align*}
	and hence,
	\begin{align*}
	\mP \cdot \begin{bmatrix}
	x^{k+1} - x^k\\y^{k+1} - y^k
	\end{bmatrix}
	=
	\begin{bmatrix}
	\alpha(x_g^k - x^{k+1})-(\nabla F(x_g^k) + y^{k+1})\\
	\mW\mW^\dagger x^{k+1/2}- (1+\eta\alpha)^{-1}\eta(y^{k+1} - y^k)
	\end{bmatrix}.
	\end{align*}
	Since $y^k \in \range \mW$ for all $k=0,1,2,\ldots$, we have 
	\begin{align*}
	\mW\mW^\dagger (y^{k+1} - y^k) = y^{k+1} - y^k,
	\end{align*}
	and hence we obtain
	\begin{align*}
	\mP \cdot \begin{bmatrix}
	x^{k+1} - x^k\\y^{k+1} - y^k
	\end{bmatrix}
	=
	\begin{bmatrix}
	\alpha(x_g^k - x^{k+1})-(\nabla F(x_g^k) + y^{k+1})\\
	\mW\mW^\dagger \left[x^{k+1/2}- (1+\eta\alpha)^{-1}\eta(y^{k+1} - y^k)\right]
	\end{bmatrix}.
	\end{align*}
	Finally, from lines~\ref{alg:ALV:line:x:2} and~\ref{alg:ALV:line:x:3} of Algorithm~\ref{alg:ALV} it follows that
	\begin{equation}
		x^{k+1} = x^{k+1/2} + (1+\eta\alpha)^{-1}\eta(y^k - y^{k+1}),
	\end{equation}
	and hence,
	\begin{align*}
	\mP \cdot \begin{bmatrix}
	x^{k+1} - x^k\\y^{k+1} - y^k
	\end{bmatrix}
	=
	\begin{bmatrix}
	\alpha(x_g^k - x^{k+1})-(\nabla F(x_g^k) + y^{k+1})\\
	\mW\mW^\dagger x^{k+1}
	\end{bmatrix}.
	\end{align*}
\end{proof}

\begin{lemma}\label{ALV:lem:2}
	Let parameter $\eta$ be defined by
	\begin{equation}\label{ALV:eq:eta}
		\eta = \frac{1} {4\tau L}.
	\end{equation}
	Let parameter $\theta$ be defined by
	\begin{equation}\label{ALV:eq:theta}
		\theta = \frac{1}{\eta\lambda_{\max}(\mW)}.
	\end{equation}
	Let parameter $\alpha$ be defined by
	\begin{equation}\label{ALV:eq:alpha}
		\alpha = \mu.
	\end{equation}
	Let parameter $\tau$ be defined  by
	\begin{equation}\label{ALV:eq:tau}
		\tau = \min \left\{1,  \frac{1}{2}\sqrt{\frac{\mu}{L}\frac{\lambda_{\max}(\mW)}{\lambda_{\min}^+(\mW)}}\right\}.
	\end{equation}
	Let $\Psi^k$ be the following Lyapunov function:
	\begin{equation}\label{ALV:eq:Psi}
		\Psi^k = \sqN{\begin{bmatrix}
			x^{k} - x^*\\y^{k} - y^*
			\end{bmatrix}}_\mP
		+
		\frac{2(1-\tau)}{\tau}\bg_F(x_f^k, x^*),
	\end{equation}
	where $\mP$ is defined by \eqref{ALV:eq:P}:
	\begin{equation*}
	\mP = \begin{bmatrix}
	\frac{1}{\eta}\mI & 0\\
	0&\frac{1}{\theta}\mW^\dagger - (1+\eta\alpha)^{-1}\eta\mI
	\end{bmatrix}.\tag{\ref{ALV:eq:P}}
	\end{equation*}
	Then the following inequality holds:
	\begin{equation}
		\Psi^{k+1}
		\leq
		\left(1 + \frac{1}{4}\min\left\{\sqrt{\frac{\mu}{L}\frac{\lambda_{\min}^+(\mW)}{\lambda_{\max}(\mW)}},\frac{\lambda_{\min}^{+}(\mW)}{\lambda_{\max}(\mW)}\right\}\right)^{-1}
		\Psi^k.
	\end{equation}
\end{lemma}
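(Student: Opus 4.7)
\textbf{Proof plan for Lemma~\ref{ALV:lem:2}.}

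\medskip

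My plan is to run a standard accelerated primal--dual Lyapunov argument, using the update identity~\eqref{ALV:eq:xy} as the engine, and Lemma~\ref{ALV:lem:1} to close the recursion. Let $z^k=(x^k,y^k)$ and $z^*=(x^*,y^*)$. First I would use the quadratic identity for the $\mP$-norm together with~\eqref{ALV:eq:xy} to obtain the descent inequality
\[
\sqN{z^{k+1}-z^*}_\mP
= \sqN{z^k-z^*}_\mP + 2\bigl\langle \mP(z^{k+1}-z^k),\,z^{k+1}-z^*\bigr\rangle - \sqN{z^{k+1}-z^k}_\mP,
\]
and then substitute the right-hand side of~\eqref{ALV:eq:xy}. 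The dual cross term simplifies cleanly because $\mW\mW^{\dagger}$ is the orthogonal projection onto $\range\mW$, while $y^{k+1}-y^*\in\range\mW$ and $x^*\in\ker\mW$, giving $\langle \mW\mW^{\dagger}x^{k+1},\,y^{k+1}-y^*\rangle=\langle x^{k+1}-x^*,\,y^{k+1}-y^*\rangle$. Combining this with the primal-side inner product and using the optimality relation $y^*=-\nabla F(x^*)$ from~\eqref{eq:primaldualoptimal}, the $y^{k+1}$ terms cancel and what remains is
\[
2\alpha\bigl\langle x_g^k-x^{k+1},\,x^{k+1}-x^*\bigr\rangle - 2\bigl\langle \nabla F(x_g^k)-\nabla F(x^*),\,x^{k+1}-x^*\bigr\rangle - \sqN{z^{k+1}-z^k}_\mP.
\]

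Next I would handle each of these three pieces. The $\alpha$-term is controlled by the polarization identity $2\alpha\langle x_g^k-x^{k+1},x^{k+1}-x^*\rangle = \alpha\|x_g^k-x^*\|^2-\alpha\|x^{k+1}-x^*\|^2-\alpha\|x^{k+1}-x_g^k\|^2$, which, with the choice $\alpha=\mu$, produces a $-\mu\|x^{k+1}-x^*\|^2$ term that will drive the contraction on $\|x^{k+1}-x^*\|^2$. To handle the gradient inner product I would split $x^{k+1}-x^*=(x_g^k-x^*)+(x^{k+1}-x_g^k)$. On the first piece, the three-point Bregman identity together with $\mu$-strong convexity yields $\langle\nabla F(x_g^k)-\nabla F(x^*),x_g^k-x^*\rangle \geq \mu\|x_g^k-x^*\|^2 + \bg_F(x_g^k,x^*)$ (absorbing $\bg_F(x^*,x_g^k)$ by strong convexity). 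On the second piece I would invoke the Nesterov-style coupling: the update $x_f^{k+1}=x_g^k+\tfrac{2\tau}{2-\tau}(x^{k+1}-x^k)$ together with $L$-smoothness of $F$ and the affine combination $x_g^k=\tau x^k+(1-\tau)x_f^k$ gives, after standard manipulation, a recursion of the form
\[
\tfrac{2(1-\tau)}{\tau}\bg_F(x_f^{k+1},x^*) \leq \tfrac{2(1-\tau)}{\tau}\bg_F(x_f^k,x^*) - 2\langle \nabla F(x_g^k)-\nabla F(x^*),\,x^{k+1}-x_g^k\rangle - 2\bg_F(x_g^k,x^*) + E_k,
\]
where $E_k$ is an $O(L/\tau)\|x^{k+1}-x^k\|^2$ error produced by the smoothness bound.

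The third step is to absorb the negative quadratic $-\sqN{z^{k+1}-z^k}_\mP$. Here I would use Lemma~\ref{ALV:lem:1}: the $-\tfrac{1}{2\eta}\|x^{k+1}-x^k\|^2$ piece is worth, up to the dual bound, $-\tfrac{\eta}{4}\|y^{k+1}-y^*\|^2 + \eta\alpha^2\|x^{k+1}-x^*\|^2 + 2\eta L\,\bg_F(x_g^k,x^*)$. With $\eta=1/(4\tau L)$ the Bregman error from $E_k$ is exactly dominated by $2\bg_F(x_g^k,x^*)$ from the strong-convexity step, leaving one clean copy of $-\bg_F(x_g^k,x^*)$ to handle the remaining cross term; and with $\theta\lambda_{\max}(\mW)\eta=1$ the dual block of $\mP$ becomes $\tfrac{1}{\theta}\mW^\dagger-(1+\eta\alpha)^{-1}\eta\mI \succeq \tfrac{1}{2\theta}\mW^\dagger$ on $\range\mW$, so the $-\tfrac{\eta}{4}\|y^{k+1}-y^*\|^2$ bound from Lemma~\ref{ALV:lem:1} translates into a genuine negative multiple of $\|y^{k+1}-y^*\|_{\mW^\dagger}^2$. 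Assembling everything yields
\[
\Psi^{k+1} \leq \Psi^k - \mu\,\|x^{k+1}-x^*\|^2 - c_y\,\|y^{k+1}-y^*\|_{\mW^\dagger}^2 - c_f\,\bg_F(x_f^{k+1},x^*),
\]
for explicit positive constants $c_y,c_f$ depending on $\eta,\theta,\tau$.

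The last step, and the main obstacle, is to convert this pointwise decrease into the multiplicative contraction with rate $\rho=\tfrac14\min\{\sqrt{\mu\lambda_{\min}^+/(L\lambda_{\max})},\,\lambda_{\min}^+/\lambda_{\max}\}$. This requires that $\mu\geq \rho/\eta$, that $c_y\geq \rho/\theta$, and that $c_f\geq \rho\cdot\tfrac{2(1-\tau)}{\tau}$; each of these reduces to an elementary inequality in $\tau,\kappa,\chi(\mW)$ that is verified by plugging in the two regimes of the $\min$ in~\eqref{ALV:eq:tau}. The case $\tau=1$ (when $\kappa\geq\chi(\mW)/4$, so no acceleration is possible) makes the Bregman term disappear, so the contraction is driven purely by the $x$- and $y$-blocks at rate $\lambda_{\min}^+/\lambda_{\max}$; the case $\tau<1$ balances all three decrements at rate $\sqrt{\mu\lambda_{\min}^+/(L\lambda_{\max})}$. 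The delicate bookkeeping in these two regimes, and in particular confirming that the choice $\eta=1/(4\tau L)$ is exactly what makes the $E_k$ error cancel against the strong-convexity surplus, is where I expect to spend most of the care.
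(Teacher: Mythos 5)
Your plan follows essentially the same route as the paper's proof: expand $\sqn{z^{k+1}-z^*}_\mP$ via the quadratic identity, substitute \eqref{ALV:eq:xy}, cancel the dual cross term using $\mW\mW^\dagger x^*=0$ and the optimality condition $\nabla F(x^*)+y^*=0$, run the Nesterov coupling through lines~\ref{alg:ALV:line:x:1} and~\ref{alg:ALV:line:x:4}, invoke Lemma~\ref{ALV:lem:1} to trade part of the $-\frac{1}{\eta}\sqn{x^{k+1}-x^k}$ budget for a decrement in $y$, and finish with \eqref{ALV:eq:Pbound} to turn additive decrements into a multiplicative contraction. Three details as written would not survive the bookkeeping, though. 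First, strong convexity gives $\langle\nabla F(x_g^k)-\nabla F(x^*),x_g^k-x^*\rangle\geq \bg_F(x_g^k,x^*)+\frac{\mu}{2}\sqn{x_g^k-x^*}$, not $\mu\sqn{x_g^k-x^*}$ plus the Bregman term; the $\frac{\mu}{2}$, doubled by the factor $2$ in front of the inner product, is exactly what cancels the $+\alpha\sqn{x_g^k-x^*}$ from your polarization step when $\alpha=\mu$, and after Lemma~\ref{ALV:lem:1} adds back $\eta\alpha^2\sqn{x^{k+1}-x^*}$ your net decrement on $\sqn{x^{k+1}-x^*}$ is $\frac{\mu}{2}$, not $\mu$. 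Second, the claim that the dual block satisfies $\frac{1}{\theta}\mW^\dagger-(1+\eta\alpha)^{-1}\eta\mI\succeq\frac{1}{2\theta}\mW^\dagger$ would require $\eta\mu\geq 1$, which is false here ($\eta\mu\leq\frac{1}{4\kappa}$); fortunately only positive semidefiniteness and the trivial upper bound $\preceq\frac{1}{\theta}\mW^\dagger$, i.e.\ \eqref{ALV:eq:Pbound}, are actually needed to close the contraction. Third, and most substantively, $2\eta L=\frac{1}{2\tau}$ can be arbitrarily large when $\tau$ is small (the regime $\chi(\mW)\ll\kappa$), so the term $2\eta L\,\bg_F(x_g^k,x^*)$ produced by Lemma~\ref{ALV:lem:1} cannot be absorbed by the single surplus $-\bg_F(x_g^k,x^*)$ as you assert; the paper repairs this by applying Lemma~\ref{ALV:lem:1} to only a fraction $\delta=\min\{1,\frac{1}{2\eta L}\}$ of the quadratic budget, which is also the source of the extra entry $\frac{\tau\lambda_{\min}^+(\mW)}{2\lambda_{\max}(\mW)}$ in the rate minimum before $\tau$ is substituted. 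With these repairs your argument coincides with the paper's.
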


\begin{proof}
	\begin{align*}
		\sqN{\begin{bmatrix}
				x^{k+1} - x^*\\y^{k+1} - y^*
			\end{bmatrix}}_\mP
		&=
		\sqN{\begin{bmatrix}
			x^{k} - x^*\\y^{k} - y^*
			\end{bmatrix}}_\mP
		-
		\sqN{\begin{bmatrix}
			x^{k+1} - x^k\\y^{k+1} - y^k
			\end{bmatrix}}_\mP
		+
		2\Dotprod{\mP\cdot\begin{bmatrix}
			x^{k+1} - x^k\\y^{k+1} - y^k
			\end{bmatrix}, \begin{bmatrix}
			x^{k+1} - x^*\\y^{k+1} - y^*
			\end{bmatrix}}
	\end{align*}
	Note, that stepsize $\eta$ defined by \eqref{ALV:eq:eta} and stepsize $\theta$ defined by \eqref{ALV:eq:theta} satisfy \eqref{ALV:eq:etatheta}, hence inequality \eqref{ALV:eq:Pbound} holds.
	Using \eqref{ALV:eq:Pbound} and \eqref{ALV:eq:xy} we get
	\begin{align*}
	\sqN{\begin{bmatrix}
		x^{k+1} - x^*\\y^{k+1} - y^*
		\end{bmatrix}}_\mP
	&\leq
	\sqN{\begin{bmatrix}
		x^{k} - x^*\\y^{k} - y^*
		\end{bmatrix}}_\mP
	-
	\frac{1}{\eta}\sqn{x^{k+1} - x^k}
	\\&+
	2\Dotprod{\begin{bmatrix}
		\alpha(x_g^k - x^{k+1})-(\nabla F(x_g^k) + y^{k+1})\\
		\mW\mW^\dagger x^{k+1}
		\end{bmatrix}, \begin{bmatrix}
		x^{k+1} - x^*\\y^{k+1} - y^*
		\end{bmatrix}}
	\\&=
	\sqN{\begin{bmatrix}
		x^{k} - x^*\\y^{k} - y^*
		\end{bmatrix}}_\mP
	-
	\frac{1}{\eta}\sqn{x^{k+1} - x^k}
	+
	2\alpha\<x_g^k - x^{k+1}, x^{k+1} - x^*>
	\\&-
	2\<\nabla F(x_g^k) + y^{k+1}, x^{k+1} - x^*>
	+
	2\<\mW\mW^\dagger x^{k+1}, y^{k+1} - y^*>.
	\end{align*}
	Since $\mW\mW^\dagger x^* = 0$ and $\mW\mW^\dagger(y^{k+1} - y^*) = y^{k+1} - y^*$, we get
	\begin{align*}
	\sqN{\begin{bmatrix}
		x^{k+1} - x^*\\y^{k+1} - y^*
		\end{bmatrix}}_\mP
	&\leq
	\sqN{\begin{bmatrix}
		x^{k} - x^*\\y^{k} - y^*
		\end{bmatrix}}_\mP
	-
	\frac{1}{\eta}\sqn{x^{k+1} - x^k}
	+
	2\alpha\<x_g^k - x^{k+1}, x^{k+1} - x^*>
	\\&-
	2\<\nabla F(x_g^k) + y^{k+1}, x^{k+1} - x^*>
	+
	2\<x^{k+1} - x^*, y^{k+1} - y^*>.
	\end{align*}
	Since $\nabla F(x^*) + y^* = 0$ (optimality condition \eqref{eq:primaldualoptimal}), we get
	\begin{align*}
	\sqN{\begin{bmatrix}
		x^{k+1} - x^*\\y^{k+1} - y^*
		\end{bmatrix}}_\mP
	&\leq
	\sqN{\begin{bmatrix}
		x^{k} - x^*\\y^{k} - y^*
		\end{bmatrix}}_\mP
	-
	\frac{1}{\eta}\sqn{x^{k+1} - x^k}
	+
	2\alpha\<x_g^k - x^{k+1}, x^{k+1} - x^*>
	\\&-
	2\<\nabla F(x_g^k) - \nabla F(x^*)+ y^{k+1} - y^*, x^{k+1} - x^*>
	+
	2\<x^{k+1} - x^*, y^{k+1} - y^*>
	\\&=
	\sqN{\begin{bmatrix}
		x^{k} - x^*\\y^{k} - y^*
		\end{bmatrix}}_\mP
	-
	\frac{1}{\eta}\sqn{x^{k+1} - x^k}
	-
	2\alpha\sqn{x^{k+1} - x^*}
	\\&-
	2\alpha\<x_g^k -  x^*, x^{k+1} - x^*>
	-
	2\<\nabla F(x_g^k) - \nabla F(x^*), x^{k+1} - x^*>.
	\end{align*}
	Using Young's  inequality $2\<a,b> \leq \sqn{a} + \sqn{b}$ we get
	\begin{align*}
	\sqN{\begin{bmatrix}
		x^{k+1} - x^*\\y^{k+1} - y^*
		\end{bmatrix}}_\mP
	&\leq
	\sqN{\begin{bmatrix}
		x^{k} - x^*\\y^{k} - y^*
		\end{bmatrix}}_\mP
	-
	\frac{1}{\eta}\sqn{x^{k+1} - x^k}
	-
	2\alpha\sqn{x^{k+1} - x^*}
	\\&+
	\alpha\sqn{x_g^k - x^*}
	+
	\alpha\sqn{x^{k+1} - x^*}
	-
	2\<\nabla F(x_g^k) - \nabla F(x^*), x^{k+1} - x^*>
	\\&=
	\sqN{\begin{bmatrix}
		x^{k} - x^*\\y^{k} - y^*
		\end{bmatrix}}_\mP
	-
	\frac{1}{\eta}\sqn{x^{k+1} - x^k}
	-
	\alpha\sqn{x^{k+1} - x^*}
	+
	\alpha\sqn{x_g^k - x^*}
	\\&-
	2\<\nabla F(x_g^k) - \nabla F(x^*), x^{k+1} - x^*>.
	\end{align*}
	Now, we use lines~\ref{alg:ALV:line:x:1} and~\ref{alg:ALV:line:x:4} of Algorithm~\ref{alg:ALV} and get
	\begin{align*}
	\sqN{\begin{bmatrix}
		x^{k+1} - x^*\\y^{k+1} - y^*
		\end{bmatrix}}_\mP
	&\leq
	\sqN{\begin{bmatrix}
		x^{k} - x^*\\y^{k} - y^*
		\end{bmatrix}}_\mP
	-
	\alpha\sqn{x^{k+1} - x^*}
	+
	\alpha\sqn{x_g^k - x^*}
	-
	\frac{1}{2\eta}\sqn{x^{k+1} - x^k}
	\\&-
	\frac{2-\tau}{\tau}
	\left(
			\<\nabla F(x_g^k) - \nabla F(x^*), x_f^{k+1} - x_g^k>
			+
			\frac{1}{2\eta}\frac{(2-\tau)}{4\tau}\sqn{x_f^{k+1} - x_g^k}
	\right)
	\\&-
	2\<\nabla F(x_g^k) - \nabla F(x^*), x_g^k - x^*>
	+
	\frac{2(1-\tau)}{\tau}\<\nabla F(x_g^k) - \nabla F(x^*), x_f^k - x_g^k>.
	\end{align*}
	Since parameter $\eta$ defined by \eqref{ALV:eq:eta} satisfy $\eta \leq \frac{2-\tau}{4\tau L}$, we get
	\begin{align*}
	\sqN{\begin{bmatrix}
		x^{k+1} - x^*\\y^{k+1} - y^*
		\end{bmatrix}}_\mP
	&\leq
	\sqN{\begin{bmatrix}
		x^{k} - x^*\\y^{k} - y^*
		\end{bmatrix}}_\mP
	-
	\alpha\sqn{x^{k+1} - x^*}
	+
	\alpha\sqn{x_g^k - x^*}
	-
	\frac{1}{2\eta}\sqn{x^{k+1} - x^k}
	\\&-
	\frac{2-\tau}{\tau}
	\left(
	\<\nabla F(x_g^k) - \nabla F(x^*), x_f^{k+1} - x_g^k>
	+
	\frac{L}{2}\sqn{x_f^{k+1} - x_g^k}
	\right)
	\\&-
	2\<\nabla F(x_g^k) - \nabla F(x^*), x_g^k - x^*>
	+
	\frac{2(1-\tau)}{\tau}\<\nabla F(x_g^k) - \nabla F(x^*), x_f^k - x_g^k>.
	\end{align*}
	Using $\mu$-strong convexity and $L$-smoothness of $f(x)$ we get
	\begin{align*}
	\sqN{\begin{bmatrix}
		x^{k+1} - x^*\\y^{k+1} - y^*
		\end{bmatrix}}_\mP
	&\leq
	\sqN{\begin{bmatrix}
		x^{k} - x^*\\y^{k} - y^*
		\end{bmatrix}}_\mP
	-
	\alpha\sqn{x^{k+1} - x^*}
	+
	\alpha\sqn{x_g^k - x^*}
	-
	\frac{1}{2\eta}\sqn{x^{k+1} - x^k}
	\\&-
	\frac{2-\tau}{\tau}
	\left(
		\bg_F (x_f^{k+1},x^*) - \bg_F (x_g^k,x^*)
	\right)
	+
	\frac{2(1-\tau)}{\tau}\left( \bg_F(x_f^k, x^*) - \bg_F(x_g^k,x^*) \right)
	\\&-
	2\left(\bg_F(x_g^k,x^*) + \frac{\mu}{2}\sqn{x_g^k - x^*}\right)
	\\&=
	\sqN{\begin{bmatrix}
		x^{k} - x^*\\y^{k} - y^*
		\end{bmatrix}}_\mP
	-
	\alpha\sqn{x^{k+1} - x^*}
	+
	\frac{2(1-\tau)}{\tau}\bg_F(x_f^k, x^*)
	-
	\frac{2-\tau}{\tau}\bg_F (x_f^{k+1},x^*) 
	\\&+
	(\alpha - \mu)\sqn{x_g^k - x^*}
	-
	\frac{1}{2\eta}\sqn{x^{k+1} - x^k}
	-
	\bg_F(x_g^k,x^*).
	\end{align*}
	Now, we define $\delta = \min \left\{1, \frac{1}{2\eta L}\right\}$. Since $\alpha$ defined by \eqref{ALV:eq:alpha} satisfies conditions of Lemma~\ref{ALV:lem:1}, we can use \eqref{ALV:eq:1} and get
	\begin{align*}
	\sqN{\begin{bmatrix}
		x^{k+1} - x^*\\y^{k+1} - y^*
		\end{bmatrix}}_\mP
	&\leq
	\sqN{\begin{bmatrix}
		x^{k} - x^*\\y^{k} - y^*
		\end{bmatrix}}_\mP
	-
	\alpha\sqn{x^{k+1} - x^*}
	+
	\frac{2(1-\tau)}{\tau}\bg_F(x_f^k, x^*)
	-
	\frac{2-\tau}{\tau}\bg_F (x_f^{k+1},x^*) 
	\\&+
	(\alpha - \mu)\sqn{x_g^k - x^*}
	-
	\frac{\delta}{2\eta}\sqn{x^{k+1} - x^k}
	-
	\bg_F(x_g^k,x^*)
	\\&\leq
	\sqN{\begin{bmatrix}
		x^{k} - x^*\\y^{k} - y^*
		\end{bmatrix}}_\mP
	-
	\alpha\sqn{x^{k+1} - x^*}
	+
	\frac{2(1-\tau)}{\tau}\bg_F(x_f^k, x^*)
	-
	\frac{2-\tau}{\tau}\bg_F (x_f^{k+1},x^*) 
	\\&-
	\frac{\eta\delta}{4}\sqn{y^{k+1} - y^*}
	+
	\eta\alpha^2\delta\sqn{x^{k+1} - x^*}
	+
	2\eta  L \delta\bg_{f }(x_g^k, x^*)
	\\&+
	(\alpha - \mu)\sqn{x_g^k - x^*}
	-
	\bg_F(x_g^k,x^*)
	\\&\leq
	\sqN{\begin{bmatrix}
		x^{k} - x^*\\y^{k} - y^*
		\end{bmatrix}}_\mP
	-
	\alpha\sqn{x^{k+1} - x^*}
	+
	\frac{2(1-\tau)}{\tau}\bg_F(x_f^k, x^*)
	-
	\frac{2-\tau}{\tau}\bg_F (x_f^{k+1},x^*) 
	\\&-
	\frac{\eta\delta}{4}\sqn{y^{k+1} - y^*}
	+
	\frac{\alpha^2}{2L}\sqn{x^{k+1} - x^*}
	+
	(\alpha - \mu)\sqn{x_g^k - x^*}
	\\&=
	\sqN{\begin{bmatrix}
		x^{k} - x^*\\y^{k} - y^*
		\end{bmatrix}}_\mP
	-
	\left(
		\alpha - \frac{\alpha^2}{2L}
	\right)\sqn{x^{k+1} - x^*}
	-
	\frac{\eta\delta}{4}\sqn{y^{k+1} - y^*}
	\\&+
	\frac{2(1-\tau)}{\tau}\bg_F(x_f^k, x^*)
	-
	\frac{2-\tau}{\tau}\bg_F (x_f^{k+1},x^*) 
	+
	(\alpha - \mu)\sqn{x_g^k - x^*}.
	\end{align*}
	Using parameter $\alpha$ defined by \eqref{ALV:eq:alpha} we get
	\begin{align*}
	\sqN{\begin{bmatrix}
		x^{k+1} - x^*\\y^{k+1} - y^*
		\end{bmatrix}}_\mP
	&\leq
	\sqN{\begin{bmatrix}
		x^{k} - x^*\\y^{k} - y^*
		\end{bmatrix}}_\mP
	-
	\frac{\mu}{2}\sqn{x^{k+1} - x^*}
	-
	\frac{\eta\delta}{4}\sqn{y^{k+1} - y^*}
	\\&+
	\frac{2(1-\tau)}{\tau}\bg_F(x_f^k, x^*)
	-
	\frac{2-\tau}{\tau}\bg_F (x_f^{k+1},x^*) .
	\end{align*}
	Since $y^k,y^*\in \range \mW$, we get
	\begin{align*}
	\sqN{\begin{bmatrix}
		x^{k+1} - x^*\\y^{k+1} - y^*
		\end{bmatrix}}_\mP
	&\leq
	\sqN{\begin{bmatrix}
		x^{k} - x^*\\y^{k} - y^*
		\end{bmatrix}}_\mP
	-
	\frac{\mu}{2}\sqn{x^{k+1} - x^*}
	-
	\frac{\eta\delta\lambda_{\min}^{+}(\mW)}{4}\sqnw{y^{k+1} - y^*}
	\\&+
	\frac{2(1-\tau)}{\tau}\bg_F(x_f^k, x^*)
	-
	\frac{2-\tau}{\tau}\bg_F (x_f^{k+1},x^*).
	\end{align*}
	Using \eqref{ALV:eq:Pbound} we get
	\begin{align*}
	\sqN{\begin{bmatrix}
		x^{k+1} - x^*\\y^{k+1} - y^*
		\end{bmatrix}}_\mP
	&\leq
	\sqN{\begin{bmatrix}
		x^{k} - x^*\\y^{k} - y^*
		\end{bmatrix}}_\mP
	-
	\min\left\{\frac{\eta\mu}{2}, \frac{\eta\theta\delta\lambda_{\min}^{+}(\mW)}{4}\right\}
	\sqN{\begin{bmatrix}
		x^{k+1} - x^*\\y^{k+1} - y^*
		\end{bmatrix}}_\mP
	\\&+
	\frac{2(1-\tau)}{\tau}\bg_F(x_f^k, x^*)
	-
	\frac{2-\tau}{\tau}\bg_F (x_f^{k+1},x^*) .
	\end{align*}
	Using parameter $\theta$ defined by \eqref{ALV:eq:theta} and definition of $\delta$ we get
	\begin{align*}
	\sqN{\begin{bmatrix}
		x^{k+1} - x^*\\y^{k+1} - y^*
		\end{bmatrix}}_\mP
	&\leq
	\sqN{\begin{bmatrix}
	x^{k} - x^*\\y^{k} - y^*
	\end{bmatrix}}_\mP
	-
	 \min\left\{\frac{\eta\mu}{2},\frac{\lambda_{\min}^{+}(\mW)}{4\lambda_{\max}(\mW)}, \frac{\lambda_{\min}^{+}(\mW)}{8\eta L\lambda_{\max}(\mW)} \right\}
	\sqN{\begin{bmatrix}
		x^{k+1} - x^*\\y^{k+1} - y^*
		\end{bmatrix}}_\mP
	\\&+
	\frac{2(1-\tau)}{\tau}\bg_F(x_f^k, x^*)
	-
	\frac{2-\tau}{\tau}\bg_F (x_f^{k+1},x^*).
	\end{align*}
	Plugging parameter $\eta$ defined by \eqref{ALV:eq:eta} we get
	\begin{align*}
	\sqN{\begin{bmatrix}
		x^{k+1} - x^*\\y^{k+1} - y^*
		\end{bmatrix}}_\mP
	&\leq
	\sqN{\begin{bmatrix}
		x^{k} - x^*\\y^{k} - y^*
		\end{bmatrix}}_\mP
	-
	\min\left\{\frac{\mu}{8\tau L},
	\frac{\lambda_{\min}^{+}(\mW)}{4\lambda_{\max}(\mW)}, \frac{\tau\lambda_{\min}^{+}(\mW)}{2\lambda_{\max}(\mW)} \right\}
	\sqN{\begin{bmatrix}
		x^{k+1} - x^*\\y^{k+1} - y^*
		\end{bmatrix}}_\mP
	\\&+
	\frac{2(1-\tau)}{\tau}\bg_F(x_f^k, x^*)
	-
	\frac{2-\tau}{\tau}\bg_F (x_f^{k+1},x^*)
	\\&\leq
	\sqN{\begin{bmatrix}
		x^{k} - x^*\\y^{k} - y^*
		\end{bmatrix}}_\mP
	-
	\min\left\{\frac{\mu}{8\tau L},
	\frac{\lambda_{\min}^{+}(\mW)}{4\lambda_{\max}(\mW)}, \frac{\tau\lambda_{\min}^{+}(\mW)}{2\lambda_{\max}(\mW)} \right\}
	\sqN{\begin{bmatrix}
		x^{k+1} - x^*\\y^{k+1} - y^*
		\end{bmatrix}}_\mP
	\\&+
	\frac{2(1-\tau)}{\tau}\bg_F(x_f^k, x^*)
	-
	\left(1 + \frac{\tau}{2}\right)\frac{2(1-\tau)}{\tau}\bg_F (x_f^{k+1},x^*).
	\end{align*}
	After rearranging and using definition of $\Psi^k$ \eqref{ALV:eq:Psi} we get
	\begin{align*}
		\Psi^k
		&\geq
		\left(
		1 + \min\left\{\frac{\tau}{2},\frac{\mu}{8\tau L},
		\frac{\lambda_{\min}^{+}(\mW)}{4\lambda_{\max}(\mW)}, \frac{\tau\lambda_{\min}^{+}(\mW)}{2\lambda_{\max}(\mW)} \right\}
		\right) \Psi^{k+1}.
	\end{align*}
	Plugging parameter $\tau$ defined by \eqref{ALV:eq:tau} we get
	\begin{align*}
	\Psi^{k}
	&\geq
	\left(1 + \frac{1}{4}\min\left\{\sqrt{\frac{\mu}{L}\frac{\lambda_{\min}^+(\mW)}{\lambda_{\max}(\mW)}},\frac{\lambda_{\min}^{+}(\mW)}{\lambda_{\max}(\mW)}\right\}\right)
	\Psi^{k+1}.
	\end{align*}
\end{proof}

\begin{proof}[Proof of Theorem~\ref{th:ALV} (APAPC)]
	Conditions of Lemma~\ref{ALV:lem:2} are satisfied, hence the following inequality holds for all $k$:
	\begin{equation}
		\Psi^{k+1}
		\leq
		\left(1 + \frac{1}{4}\min\left\{\sqrt{\frac{\mu}{L}\frac{\lambda_{\min}^+(\mW)}{\lambda_{\max}(\mW)}},\frac{\lambda_{\min}^{+}(\mW)}{\lambda_{\max}(\mW)}\right\}\right)^{-1}
		\Psi^k.
	\end{equation}
	After doing telescoping we get
	\begin{equation}
		\Psi^k \leq \left(1 + \frac{1}{4}\min\left\{\sqrt{\frac{\mu}{L}\frac{\lambda_{\min}^+(\mW)}{\lambda_{\max}(\mW)}},\frac{\lambda_{\min}^{+}(\mW)}{\lambda_{\max}(\mW)}\right\}\right)^{-k}
		\Psi^0.
	\end{equation}
	Inequality \eqref{ALV:eq:Pbound} implies $\Psi^0 \leq C$, where $C \eqdef \frac{1}{\eta}\sqN{x^0 - x^*} + \frac{1}{\theta}\sqn{y^0 - y^*}_{\mW^\dagger} +
	\frac{2(1-\tau)}{\tau}\bg_F(x_f^0, x^*).$
	Hence, we obtain.
	\begin{equation}
	\Psi^k \leq \left(1 + \frac{1}{4}\min\left\{\sqrt{\frac{\mu}{L}\frac{\lambda_{\min}^+(\mW)}{\lambda_{\max}(\mW)}},\frac{\lambda_{\min}^{+}(\mW)}{\lambda_{\max}(\mW)}\right\}\right)^{-k}
	C.
	\end{equation}
	It remains to lower bound $\Psi^k$ using \eqref{ALV:eq:Pbound} one more time:
	\begin{equation}
		\frac{1}{\eta}\sqN{x^k - x^*} +
		\frac{2(1-\tau)}{\tau}\bg_F(x_f^k, x^*)
		\leq
		\Psi^k
		\leq
		\left(1 + \frac{1}{4}\min\left\{\sqrt{\frac{\mu}{L}\frac{\lambda_{\min}^+(\mW)}{\lambda_{\max}(\mW)}},\frac{\lambda_{\min}^{+}(\mW)}{\lambda_{\max}(\mW)}\right\}\right)^{-k}
		C.
	\end{equation}
	Finally, choosing number of iterations
	\begin{equation}
	k \geq \left(1+ 4\max\left\{\sqrt{\frac{L \lambda_{\max}(\mW)}{\mu \lambda_{\min}^+(\mW)}}, \frac{\lambda_{\max}(\mW)}{\lambda_{\min}^+(\mW)}\right\}\right)
	\log \left(\frac{\eta C}{\varepsilon}\right).
	\end{equation}
	implies $\sqn{x^k - x^*} \leq \varepsilon$.
\end{proof}

\newpage

\section{Proof of Corollary~\ref{th:ALV-opt-cor} (OPAPC)}
			
		First, Theorem~\ref{th:ALV} still holds true by replacing $\lambda_{\max}(\mW)$ by an upper bound $\lambda_1$, $\lambda_{\min}^+(\mW)$ by a lower bound $\lambda_2 >0$ and $\chi(\mW)$ by the upper bound $\chi = \nicefrac{\lambda_1}{\lambda_2}$.\footnote{The proof is the same by replacing $\lambda_{\max}(\mW)$ by $\lambda_1$ and $\lambda_{\min}^+(\mW)$ by $\lambda_2$.}

		The proof of Corollary~\ref{th:ALV-opt-cor} is similar to the proof of Theorem 4 of~\cite{scaman2017optimal}.

		Denote $\tilde{\mW} = \frac{2\chi(\mW)}{(1+\chi(\mW))\lambda_{\max}(\mW)} \mW$. Let $I$ be the interval $I = [1-\frac{1}{c_2},1+\frac{1}{c_2}] \subset (0,2)$. Then, $\Sp(\tilde{\mW}) \setminus \{0\} \subset I$, where $\Sp$ denotes the spectrum. Moreover, using~\cite{scaman2017optimal}, the polynomial $P$ satisfies $P(0) = 0$ and $\max_{t \in I}|1-P(t)| = \frac{2c_1^T}{1+c_1^{2T}} < 1$. Therefore, 
		\begin{equation}
			\Sp(I - P(\tilde{\mW})) \setminus \{1\} \subset \left[-\frac{2c_1^T}{1+c_1^{2T}},\frac{2c_1^T}{1+c_1^{2T}}\right] \subset (-1,1).
		\end{equation}
		Consequently, 
		\begin{equation}
			\lambda_{\max}(P(\tilde{\mW})) \leq \lambda_1 \eqdef 1+\frac{2c_1^T}{1+c_1^{2T}} < 2, \quad \lambda_{\min}^+(P(\tilde{\mW})) \geq \lambda_2 \eqdef1-\frac{2c_1^T}{1+c_1^{2T}} > 0.
		\end{equation}
		Moreover, by replacing $c_1$ and $T$ by their values, $\chi \eqdef \frac{\lambda_1}{\lambda_2} \leq 4$, see~\cite[Equation 34]{scaman2017optimal}.

		Applying APAPC with the gossip matrix $P(\tilde{\mW})$ leads to OPAPC. Then, we apply Theorem~\ref{th:ALV} to OPAPC. More precisely, we apply Theorem~\ref{th:ALV} by replacing $\mW$ by $P(\tilde{\mW})$ and $\lambda_{\max}(\mW)$ (resp. $\lambda_{\min}^+(\mW)$) by the upper bound (resp. the lower bound) $\lambda_1$ (resp. $\lambda_2$) of $\lambda_{\max}(P(\tilde{\mW}))$ (resp. $\lambda_{\min}^+(P(\tilde{\mW}))$). Denoting $x^k$ the iterates of OPAPC, we obtain 
		\begin{equation}
			\mytextstyle \frac{1}{\eta}\sqN{x^k - x^*} +
			\frac{2-\tau}{\tau}\bg_F(x_f^k, x^*)
			\leq
			\left(1 + \frac{1}{16}\min\left\{\frac{2}{\sqrt{\kappa}},1\right\}\right)^{-k}
			C.
		\end{equation}
		Finally, the gradient computation complexity of OPAPC is $\cO(\sqrt{\kappa}\log(1/\varepsilon))$. One multiplication by $P(\tilde{\mW})$ is equivalent to one application of the procedure \textsc{AcceleratedGossip}$(\mW,\cdot,T)$, which requires exactly $T$ communication rounds. Therefore, the communication complexity of OPAPC is $T\cO(\sqrt{\kappa}\log(1/\varepsilon)) = \cO(\sqrt{\kappa\chi(\mW)}\log(1/\varepsilon))$.


\newpage

\section{A Loopless Algorithm Optimal in Communication Complexity}

	
We propose another accelerated Forward Backward algorithm to solve Problem~\eqref{eq:2}. More precisely, we first provide a reformulation of Problem~\eqref{eq:2}, different from the reformulation~\eqref{eq:primaldualoptimal}. Then, we design an accelerated Forward Backward algorithm associated with this reformulation. Remarkably, the matrix $\mW$ is only involved in the operator $A$ of this new Forward Backward algorithm. This leads to an acceleration compared to APAPC, and to an optimal communication complexity.

In this section, $\sX$ is the Euclidean space $\sX = (\R^{d})^{\cV} \times (\R^{d})^{\cV} \times \range(\mW)$ endowed with the norm $\sqn{(x,y,z)}_{\sX} \eqdef \sqn{x} + \sqn{y} + \sqn{z}_{\mW^{\dagger}}$.

Using the first order optimality conditions, a point $x^*$ is a solution to Problem~\eqref{eq:2} if and only if $\nabla F(x^*) \in \range(\mW)$ and $x^* \in \ker(\mW)$. Solving Problem~\eqref{eq:2} is therefore equivalent to finding $(x^*, y^*,z^*)\in \sX$ such that
\begin{align}
	0 &= \nabla F(x^*) - \frac{\mu}{2}x^* - y^*,\label{opt:4}\\
	0 &= x^* + \frac{2}{\mu}(y^* + z^*),\label{opt:5}\\
	0 &= \frac{2}{\mu}\mW(y^* + z^*).\label{opt:6}
\end{align}
Indeed, if~\eqref{opt:4}--\eqref{opt:6} holds, then using~\eqref{opt:4}, $y^* = \nabla F(x^*) - \frac{\mu}{2}x^*$ and using~\eqref{opt:5} $z^* = -\nabla F(x^*) \in \range(\mW)$. Since $z^* \in \range(\mW)$ and $y^* + z^* = - \frac{\mu}{2}x^*$, we have $\nabla F(x^*) \in \range(\mW)$ and $x^* \in \ker(\mW)$. On the other hand, if $\nabla F(x^*) \in \range(\mW)$ and $x^* \in \ker(\mW)$, then $\mW x^* = 0$ and setting $y^* = \nabla F(x^*) - \frac{\mu}{2}x^*$ and $z^* = -\nabla F(x^*) \in \range(\mW)$ leads to~\eqref{opt:4}--\eqref{opt:6}. 


Consider the map $M : \sX \to \sX$ 
\begin{equation}
M(x,y,z) \eqdef \begin{bmatrix} \nabla F(x) - \frac{\mu}{2}x&  - y &  \\ x & +\frac{2}{\mu}y & +\frac{2}{\mu}z \\  & \frac{2}{\mu} \mW y & +\frac{2}{\mu} \mW z \end{bmatrix}.
\end{equation}
Similary to Section~\ref{sec:ALV}, one can show that $M$ is a monotone operator. Moreover, $M(x^*,y^*,z^*) = 0$, i.e., $(x^*,y^*,z^*)$ is a zero of $M$. 

Consider the maps $A,B : \sX \to \sX$ defined by
\begin{equation}
	A(x,y,z) = \begin{bmatrix*}[c]
		\nabla F(x) - \frac{\mu}{2}x\\
		\frac{2}{\mu}(y + z)  + \nu y\\
		\frac{2}{\mu}\mW(y + z)
	\end{bmatrix*}, \quad
	B(x,y,z) = \begin{bmatrix*}[c]
	-y\\
	x- \nu y\\
	0
	\end{bmatrix*}.
\end{equation}
Then, $M=A+B$. Note that there is a term $\nu y$, where $\nu >0$ in $A(x,y,z)$ and a term $-\nu y$ in $B(x,y,z)$, which cancel out in the sum $A(x,y,z) + B(x,y,z)$. This additional term makes the operator $A(x,y,z)$ strongly monotone. Indeed, $A$ is the gradient of the strongly convex function (in $\sX$) $\sX \ni (x,y,z) \mapsto r(x) + h(y,z)$ defined by
\begin{equation}
	r(x) \eqdef F(x) - \tfrac{\mu}{4}\sqn{x}, \quad
	h(y,z) \eqdef \frac{1}{\mu}\sqn{y+z} + \frac{\nu}{2}\sqn{y}.
\end{equation}
In other words, operator $A(x,y,z)$ can be written as
\begin{equation}
	A(x,y,z) = \begin{bmatrix}
		\nabla r(x)\\\nabla_y h(y,z)\\\mW \nabla_z h(y,z)
	\end{bmatrix},
\end{equation}
and one can check that $A$ is strongly monotone. However, the operator $B(x,y,z)$ is not monotone in general. Indeed, $B$ is only weakly monotone since $B$ satisfies
\begin{equation}
\Dotprod{B(x,y,z) - B(x^*,y^*,z^*), \begin{bmatrix}
	x-x^*\\y-y^*\\z-z^*
	\end{bmatrix}}_{\sX}= -\nu \sqn{y- y^*}.
\end{equation}

One idea to solve~\eqref{opt:4}--\eqref{opt:6} is to apply Algorithm~\eqref{eq:FB} to the sum $A+B$, although $B$ is not monotone. Note that $B$ is linear and, although $B$ is not monotone, the resolvent of $B$ is still well defined while $1 - \gamma\nu + \gamma^2 \neq 0$. Indeed, $(x',y') = J_{\gamma B}(x,y)$ implies $x' = x + \gamma y'$, and $(1 - \gamma\nu + \gamma^2)y' = y - \gamma x$.

In particular, we propose a new algorithm that can be seen as an accelerated version of the Forward Backward Algorithm~\eqref{eq:FB} to find a zero of $A+B$. The proposed algorithm is defined in Algorithm~\ref{alg:scary} and its complexity is given in Theorem~\ref{th:scary}. We show that the complexity of Algorithm~\ref{alg:scary} is $\cO(\sqrt{\kappa \chi(\mW)}\log(1/\varepsilon))$, both in communication rounds and gradient computations. The proposed algorithm is therefore optimal in communication complexity, see Section~\ref{sec:lower-bound}. Moreover, Algorithm~\ref{alg:scary} uses only one gradient computation by communication round.


\begin{algorithm}
	\caption{}
	\label{alg:scary}
	\begin{algorithmic}[1]
		\State {\bf Parameters:}  $x^0 ,y^0\in \R^{nd},  z^0 \in  \range \mW$, $\eta,\theta ,\lambda,\alpha,\beta,\gamma, \nu>0, \tau, \sigma\in (0,1)$
		\State Set $x_f^0 = x^0$
		\State Set $y_f^0 = y^0$
		\State Set $z_f^0 = z^0$
		\For{$k=0,1,2,\ldots$}{}
		\State $x_g^k = \tau x^k + (1-\tau)x_f^k$\label{alg:scary:line:x:1}
		\State $y_g^k= \sigma y^k  + (1-\sigma)y_f^k$\label{alg:scary:line:y:1}
		\State $z_g^k= \sigma z^k  + (1-\sigma)z_f^k$\label{alg:scary:line:z:1}
		\State $x^{k+1} = x^k + \eta\alpha  (x_g^k - x^{k+1}) - \eta \nabla r(x_g^k) + \eta y^{k+1}$\label{alg:scary:line:x:2}
		\State $y^{k+1} = y^k + \theta\beta ( y_g^k - y^{k+1}) - \theta \nabla_y h(y_g^k,z_g^k) + \theta\nu y^{k+1} - \theta x^{k+1}$\label{alg:scary:line:y:2}
		\State $z^{k+1} = z^k + \lambda\gamma (z_g^k - z^{k+1}) - \lambda\mW\nabla_z h(y_g^k,z_g^k)$\label{alg:scary:line:z:2}
		\State $x_f^{k+1} = x_g^k + \tfrac{2\tau}{2-\tau}(x^{k+1} - x^k)$\label{alg:scary:line:x:3}
		\State $y_f^{k+1} = y_g^k+ \sigma (y^{k+1}-y^k) $\label{alg:scary:line:y:3}
		\State $z_f^{k+1} = z_g^k+ \sigma (z^{k+1}-z^k) $\label{alg:scary:line:z:3}
		\EndFor
	\end{algorithmic}
\end{algorithm}

\begin{theorem}[Algorithm~\ref{alg:scary}]
	\label{th:scary}
	Set the parameters $\eta,\theta ,\lambda,\alpha,\beta,\gamma, \nu>0, \tau, \sigma\in (0,1)$ to
	\begin{align}
		\eta &= \left[2\sqrt{L\mu}+ \mu\right]^{-1},
		&
		\alpha &= \frac{\mu}{3},
		&
		\tau &=\frac{1}{2}\sqrt{\frac{\mu}{L}},
		\\
		\theta &= \left[\frac{1}{4}\sqrt{\frac{\lambda_{\min}^+(\mW)}{\lambda_{\max}(\mW)\mu L}}+\frac{5}{96L}\right]^{-1},
		&
		\beta &= \frac{1}{96L},
		&
		\sigma &=\frac{1}{20}\sqrt{\frac{\lambda_{\min}^+(\mW)}{\lambda_{\max}(\mW)}\frac{\mu}{L}},
		\\
		\lambda &= \left[\frac{1}{4}\sqrt{\frac{\lambda_{\min}^+(\mW)\lambda_{\max}(\mW)}{\mu L}}+\frac{\lambda_{\min}^+(\mW)}{96L} \right]^{-1},
		&
		\gamma &= \frac{\lambda_{\min}^+(\mW)}{96L},
		&
		\nu &= \frac{1}{24L}.
		\end{align}
	Then, the sequence $(x^k)$ converges linearly to $x^*$. Moreover, for every $\varepsilon >0$, Algorithm~\ref{alg:scary} finds $x^k$ for which $\sqn{x^k - x^*} \leq \varepsilon$ in at most $\cO\left(\sqrt{\kappa \chi(\mW)}\log(1/\varepsilon)\right)$ gradient computations (resp.\ communication rounds).
\end{theorem}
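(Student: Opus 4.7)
The plan is to mimic the Lyapunov-function analysis used for APAPC (Theorem~\ref{th:ALV}) but adapted to the three-block primal-dual system $(x,y,z)\in\sX$ coming from the reformulation~\eqref{opt:4}--\eqref{opt:6}. I would introduce a weighted metric via a positive definite operator $\mP$ on $\sX$ whose diagonal blocks take the form $\tfrac{1}{\eta}\mI$, $\tfrac{1}{\theta}\mI$, $\tfrac{1}{\lambda}\mWp$, up to lower-order corrections coming from the implicit form of lines~\ref{alg:scary:line:x:2}--\ref{alg:scary:line:z:2} (exactly as in Lemma~\ref{ALV:lem:2}), and define the Lyapunov function
\begin{equation*}
\Psi^k \eqdef \bigl\|(x^k-x^*,\, y^k-y^*,\, z^k-z^*)\bigr\|_{\mP}^2 + \tfrac{2(1-\tau)}{\tau}\bg_r(x_f^k,x^*) + \tfrac{2(1-\sigma)}{\sigma}\bg_h\bigl((y_f^k,z_f^k),(y^*,z^*)\bigr),
\end{equation*}
where $\bg_r,\bg_h$ are the Bregman divergences of $r(x)=F(x)-\tfrac{\mu}{4}\sqn{x}$ and $h(y,z)=\tfrac{1}{\mu}\sqn{y+z}+\tfrac{\nu}{2}\sqn{y}$ introduced before Algorithm~\ref{alg:scary}.

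The next step is to rewrite each update as an implicit identity $\mP\bigl((x,y,z)^{k+1}-(x,y,z)^k\bigr)=-\widehat{M}_k$, in analogy with equation~\eqref{ALV:eq:xy}, where $\widehat{M}_k$ is the operator $A+B=M$ evaluated at a carefully chosen mixture of $(x_g^k,y_g^k,z_g^k)$ and $(x^{k+1},y^{k+1},z^{k+1})$. Expanding $\sqn{\cdot}_{\mP}$ along this identity reduces the one-step progress to a sum of inner products $\langle \widehat{M}_k,(x,y,z)^{k+1}-(x,y,z)^*\rangle$, which I would bound using (i) the $\tfrac{\mu}{2}$-strong convexity of $r$, (ii) the $\nu$-strong convexity of $h$ in its $y$-argument together with its convexity in $z$, (iii) the skew-symmetry of the off-diagonal coupling in $M$ on $\sX$ (which makes the primal-dual cross terms cancel just as $\Dotprod{B(x,y),(x,y)}_{\sX}=0$ did in Section~\ref{sec:ALV}), and (iv) the weak-monotonicity deficit $-\nu\sqn{y-y^*}$ of $B$, which is exactly absorbed by the $+\nu y$ term planted inside $A$. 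The remaining gradient inner products involving $\nabla r(x_g^k)$ and $\nabla h(y_g^k,z_g^k)$ are converted into telescoping Bregman-divergence differences by combining them with the momentum updates (lines~\ref{alg:scary:line:x:3}--\ref{alg:scary:line:z:3}), just as in the last step of the proof of Theorem~\ref{th:ALV}; this is where the two extrapolation parameters $\tau$ and $\sigma$ enter. The prescribed $\tau=\Theta(\sqrt{\mu/L})$ and $\sigma=\Theta(\sqrt{\mu\lambda_{\min}^+(\mW)/(L\lambda_{\max}(\mW))})$ are tuned so that the $x$-block contracts at rate $\tau$ while the $(y,z)$-block contracts at rate $\sigma$, yielding the joint one-step inequality $\Psi^{k+1}\leq (1-\rho)\Psi^k$ with $\rho=\Omega(\sigma)=\Omega(1/\sqrt{\kappa\chi(\mW)})$.

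The main obstacle is the bookkeeping: unlike in APAPC, where only a single coupling term between $x$ and $y$ had to be handled, here the implicit linear couplings between all three blocks (together with the weakly monotone $B$) give rise to several mixed inner products of the form $\<x-x^*,y-y^*>$ and $\<y-y^*,z-z^*>_{\mW^\dagger\text{-type}}$, and one must verify that the Young splits used to tame them leave enough strong-convexity residual in each block to sustain a decrease at rate $\sigma$. A side check of $\mP\succ 0$ under the stated step sizes (a condition analogous to~\eqref{ALV:eq:etatheta}, plus $1-\gamma\nu+\gamma^2\neq 0$ so that the resolvent of $B$ is defined) is also needed, which is why $\nu=\tfrac{1}{24L}$ and the step sizes are scaled by $L$. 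Once the contraction is established, telescoping and the lower bound $\tfrac{1}{\eta}\sqn{x^k-x^*}\leq \Psi^k$ give $\sqn{x^k-x^*}\leq\varepsilon$ after $\cO(\sqrt{\kappa\chi(\mW)}\log(1/\varepsilon))$ iterations. Since each iteration of Algorithm~\ref{alg:scary} uses exactly one evaluation of $\nabla F$ (through $\nabla r(x_g^k)$) and exactly one multiplication by $\mW$ (through $\mW\nabla_z h(y_g^k,z_g^k)$), this is simultaneously the gradient and the communication complexity, matching the lower bound of Theorem~\ref{th:lower} for communication up to constants.
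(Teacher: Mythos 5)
Your plan follows essentially the same route as the paper's proof: a Lyapunov function combining the $\mP$-weighted squared distance (diagonal blocks $\tfrac{1}{\eta}\mI$, $\tfrac{1}{\theta}\mI$, $\tfrac{1}{\lambda}\mWp$) with the Bregman divergences of $r$ and $h$, block-wise expansion of the implicit updates, cancellation of the $\<x^{k+1}-x^*,y^{k+1}-y^*>$ cross terms by skew-symmetry, conversion of the gradient inner products into telescoping Bregman differences via the momentum lines, and a contraction factor $\rho=\Theta(\sigma)=\Theta(1/\sqrt{\kappa\chi(\mW)})$, with the same final complexity accounting (one gradient and one multiplication by $\mW$ per iteration). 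One caveat on mechanism: the weak-monotonicity deficit of $B$ is \emph{not} absorbed by the $\nu\sqn{y}$ term in $h$ alone --- with the stated parameters the $(y,z)$-block recursion by itself leaves a positive residual $(2\nu-\beta)\sqn{y^{k+1}-y^*}$, and the paper covers it with the extra $-\tfrac{\eta\delta}{4}\sqn{y^{k+1}-y^*}$ harvested from the otherwise-discarded $-\tfrac{1}{2\eta}\sqn{x^{k+1}-x^k}$ in the $x$-block (the analogue of Lemma~\ref{ALV:lem:1}), a step your plan inherits from the APAPC template but does not name explicitly.
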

The Algorithm~\ref{alg:scary} achieves the communication lower bound of Theorem~\ref{th:lower}.
The proof of Theorem~\ref{th:scary} intuitively relies on viewing Algorithm~\ref{alg:scary} as an accelerated version of~\eqref{eq:FB}, although Nesterov's acceleration does not apply to general monotone operators and even less to non monotone operators.

\newpage

\section{Proof of Theorem~\ref{th:scary} (Algorithm~\ref{alg:scary})}

\begin{lemma}\label{scary:lem:1}
	Let $\alpha$ satisfy
	\begin{equation}\label{scary:alpha}
	\alpha \leq \frac{\mu}{2}.
	\end{equation}
	Let $\delta$ be defined by
	\begin{equation}\label{scary:delta}
	\delta = \min\left\{1,\frac{1}{2\eta L}\right\}.
	\end{equation}
	Then the following inequality holds:
	\begin{equation}
	-\frac{1}{2\eta}\sqn{x^{k+1} - x^*}
	\leq
	-\frac{\eta\delta}{4}\sqn{y^{k+1} - y^*} + \frac{\alpha}{4}\sqn{x^{k+1} - x^*}
	+
	\bg_{r}(x_g^k,x^*).
	\end{equation}
\end{lemma}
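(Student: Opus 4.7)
The plan is to mirror the strategy of Lemma~\ref{ALV:lem:1}, the APAPC analog of this statement, adapted to the update of Algorithm~\ref{alg:scary}. The starting point is line~\ref{alg:scary:line:x:2},
\[
x^{k+1} = x^k + \eta\alpha(x_g^k - x^{k+1}) - \eta\nabla r(x_g^k) + \eta y^{k+1},
\]
combined with the optimality condition \eqref{opt:4}, which gives $y^{*} = \nabla r(x^{*})$ since $r(x) = F(x) - \tfrac{\mu}{4}\sqn{x}$. Subtracting yields the identity
\[
x^{k+1}-x^k \;=\; \eta(y^{k+1}-y^{*}) \;-\; \eta\bigl(\nabla r(x_g^k)-\nabla r(x^{*})\bigr) \;+\; \eta\alpha(x_g^k - x^{k+1}).
\]

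First, I would take norms of this identity and apply Young's inequality in the form $\sqn{a+b+c} \ge \tfrac{1}{2}\sqn{a} - 2\sqn{b} - 2\sqn{c}$, extracting the term $\tfrac{\eta^2}{2}\sqn{y^{k+1}-y^{*}}$. Second, I would invoke the co-coercivity of the $L$-smooth convex function $r$, namely $\sqn{\nabla r(x_g^k)-\nabla r(x^{*})} \le 2L\bg_r(x_g^k, x^{*})$, and then rescale the resulting inequality by $\delta$. The definition $\delta = \min\{1,1/(2\eta L)\}$ is tailored so that $2\eta\delta L \le 1$, which makes the coefficient in front of $\bg_r(x_g^k,x^{*})$ on the RHS exactly $1$. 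Third, I would decompose $x_g^k - x^{k+1} = (x_g^k - x^{*}) - (x^{k+1}-x^{*})$, so that the term $\eta\alpha^2\delta\sqn{x_g^k-x^{k+1}}$ splits into pieces controlled by $\sqn{x^{k+1}-x^{*}}$ and $\sqn{x_g^k-x^{*}}$; the assumption $\alpha \le \mu/2$ together with the smallness of $\eta\alpha\delta$ ensures the $\sqn{x^{k+1}-x^{*}}$-coefficient lands at or below $\tfrac{\alpha}{4}$.

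The main obstacle is reconciling the stated LHS $-\tfrac{1}{2\eta}\sqn{x^{k+1}-x^{*}}$ with the quantity $-\tfrac{1}{2\eta}\sqn{x^{k+1}-x^k}$ that is produced by the above squaring step (and which is exactly what appears in the parallel APAPC Lemma~\ref{ALV:lem:1}). Taken literally, the statement requires a bridge from $\sqn{x^{k+1}-x^k}$ to $\sqn{x^{k+1}-x^{*}}$. My plan is to look for one via the identity $x^{k+1}-x^k = (x^{k+1}-x^{*})-(x^k-x^{*})$ combined with the prediction step $x_g^k = \tau x^k + (1-\tau)x_f^k$ from line~\ref{alg:scary:line:x:1}, which relates $x^k-x^{*}$ to $x_g^k-x^{*}$ and $x_f^k-x^{*}$; if the resulting auxiliary terms $\sqn{x^k-x^{*}}$ and $\sqn{x_g^k-x^{*}}$ can then be absorbed via strong convexity of $r$ (bounding them by $\bg_r(x_g^k,x^{*})$ up to a constant) and via the $\tfrac{\alpha}{4}\sqn{x^{k+1}-x^{*}}$ slack, the stated inequality follows. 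Identifying this absorption is the technically delicate part and it is plausible that a careful bookkeeping reveals the LHS should be read as $\sqn{x^{k+1}-x^k}$, in which case steps one through three above yield the bound directly.
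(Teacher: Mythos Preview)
Your diagnosis is correct: the stated left-hand side $-\tfrac{1}{2\eta}\sqn{x^{k+1}-x^*}$ is a typo for $-\tfrac{1}{2\eta}\sqn{x^{k+1}-x^k}$. The paper's own proof begins by squaring the update identity to lower-bound $\sqn{x^{k+1}-x^k}$, exactly as you outline, and the lemma is later invoked in the proof of Lemma~\ref{scary:lem:x} precisely to replace a term $-\tfrac{1}{2\eta}\sqn{x^{k+1}-x^k}$. So your instinct to abandon the search for a bridge to $\sqn{x^{k+1}-x^*}$ is right; no such bridge is used or needed.

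With that reading, your three-step plan matches the paper's argument. One small refinement: rather than grouping the identity as $a=\eta(y^{k+1}-y^*)$, $b=\eta(\nabla r(x_g^k)-\nabla r(x^*))$, $c=\eta\alpha(x_g^k-x^{k+1})$ and then splitting $c$ afterwards (which leaves you with a stray $\sqn{x_g^k-x^*}$ to absorb), the paper writes
\[
x^{k+1}-x^k=\eta\bigl(y^{k+1}-y^*\bigr)-\eta\alpha\bigl(x^{k+1}-x^*\bigr)-\eta\bigl(\nabla r(x_g^k)-\nabla r(x^*)-\alpha(x_g^k-x^*)\bigr),
\]
and applies co-coercivity to the shifted function $r(\cdot)-\tfrac{\alpha}{2}\sqn{\cdot}$, which is convex and $L$-smooth since $\alpha\le\mu/2$. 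This absorbs the $\alpha(x_g^k-x^*)$ contribution directly into $\bg_r(x_g^k,x^*)$ via $\bg_{r-\frac{\alpha}{2}\sqn{\cdot}}(x_g^k,x^*)\le\bg_r(x_g^k,x^*)$, yielding the clean coefficient $1$ in front of $\bg_r$ after the $\delta$-rescaling. Your grouping also works but costs an extra constant; the paper's regrouping is the tidier route.
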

\begin{proof}
	From line~\ref{alg:scary:line:x:2} of Algortihm~\ref{alg:scary} it follows that
	\begin{align*}
	x^{k+1}-x^k = \eta\alpha(x_g^k - x^{k+1}) - \eta \nabla r(x_g^k) + \eta y^{k+1}.
	\end{align*}
	From optimality condition \eqref{opt:4} it follows that $\nabla r(x^*) = y^*$ and hence
	\begin{align*}
	\sqn{x^{k+1} - x^k}
	&=
	\eta^2\sqn{y^{k+1} - y^* - \alpha(x^{k+1} - x^*) - (\nabla r(x_g^k) - \nabla r(x^*)- \alpha (x_g^k - x^*))}
	\\&\geq
	\frac{\eta^2}{2}\sqn{y^{k+1} - y^*} - \eta^2\sqn{\alpha(x^{k+1} - x^*) + (\nabla r(x_g^k) - \nabla r(x^*)- \alpha (x_g^k - x^*))}
	\\&\geq
	\frac{\eta^2}{2}\sqn{y^{k+1} - y^*} - 2\eta^2\alpha^2\sqn{x^{k+1} - x^*}
	\\&-
	2\eta^2\sqn{\nabla r(x_g^k) - \nabla r(x^*)- \alpha (x_g^k - x^*)}.
	\end{align*}
	From \eqref{scary:alpha} it follows that function $r(x) - \frac{\alpha}{2}\sqn{x} = F(x) - \frac{\mu+2\alpha}{4}\sqn{x}$ is convex and $L$-smooth, hence we can bound the last term:
	\begin{align*}
	\sqn{x^{k+1} - x^k}
	&\geq
	\frac{\eta^2}{2}\sqn{y^{k+1} - y^*} - 2\eta^2\alpha^2\sqn{x^{k+1} - x^*}
	-
	4\eta^2L \bg_{r(\cdot)- \frac{\alpha}{2}\sqn{\cdot}}(x_g^k,x^*)
	\\&=
	\frac{\eta^2}{2}\sqn{y^{k+1} - y^*} - 2\eta^2\alpha^2\sqn{x^{k+1} - x^*}
	-
	4\eta^2L \bg_{r}(x_g^k,x^*)
	+
	2\eta^2 L\alpha\sqn{x_g^k - x^*}
	\\&\geq
	\frac{\eta^2}{2}\sqn{y^{k+1} - y^*} - 2\eta^2\alpha^2\sqn{x^{k+1} - x^*}
	-
	4\eta^2L \bg_{r}(x_g^k,x^*).
	\end{align*}
	Multiplying by $\frac{1}{2\eta}$ and rearranging gives
	\begin{align*}
	-\frac{1}{2\eta}\sqn{x^{k+1} - x^*}
	\leq
	-\frac{\eta}{4}\sqn{y^{k+1} - y^*} + \eta\alpha^2\sqn{x^{k+1} - x^*}
	+
	2\eta L \bg_{r}(x_g^k,x^*).
	\end{align*}
	Using  $\delta$ defined by \eqref{scary:delta} we obtain
	\begin{align*}
	-\frac{1}{2\eta}\sqn{x^{k+1} - x^*}
	&\leq
	-\frac{\delta}{2\eta}\sqn{x^{k+1} - x^*}
	\\&\leq
	-\frac{\eta\delta}{4}\sqn{y^{k+1} - y^*} + \delta\eta\alpha^2\sqn{x^{k+1} - x^*}
	+
	2\delta\eta L \bg_{r}(x_g^k,x^*)
	\\&\leq
	-\frac{\eta\delta}{4}\sqn{y^{k+1} - y^*} + \frac{\eta\alpha^2}{2\eta L}\sqn{x^{k+1} - x^*}
	+
	\bg_{r}(x_g^k,x^*)
	\\&\leq
	-\frac{\eta\delta}{4}\sqn{y^{k+1} - y^*} + \frac{\alpha \mu}{4 L}\sqn{x^{k+1} - x^*}
	+
	\bg_{r}(x_g^k,x^*)
	\\&\leq
	-\frac{\eta\delta}{4}\sqn{y^{k+1} - y^*} + \frac{\alpha}{4}\sqn{x^{k+1} - x^*}
	+
	\bg_{r}(x_g^k,x^*).
	\end{align*}
\end{proof}

\begin{lemma}\label{scary:lem:x}
	Let $\alpha$ satisfy
	\begin{equation}
	\alpha \leq \frac{\mu}{2}. \tag{\ref{scary:alpha}}
	\end{equation}
	Let $\eta$ satisfy
	\begin{equation}\label{scary:eta}
	\eta \leq \frac{1}{4\tau L}.
	\end{equation}
	Then the following inequality holds:
	\begin{align}\label{scary:x}
	\frac{1}{\eta}\sqn{x^{k+1} - x^*}
	&\leq
	\frac{1}{\eta}\sqn{x^k - x^*}
	-
	\frac{3\alpha}{4}\sqn{x^{k+1} - x^*}
	+
	\frac{2(1-\tau)}{\tau}\bg_r(x_f^k,x^*)
	-
	\frac{2-\tau}{\tau}\bg_r(x_f^{k+1},x^*)
	\\&-
	\frac{\eta\delta}{4}\sqn{y^{k+1} - y^*}
	+
	2\< y^{k+1} - y^*, x^{k+1} - x^*>.\nonumber
	\end{align}	
	
\end{lemma}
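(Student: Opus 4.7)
The plan is to adapt the $x$-slice of the APAPC proof (Lemma~\ref{ALV:lem:2}) to the new accelerated scheme of Algorithm~\ref{alg:scary}, with $\nabla r$ playing the role of $\nabla F$. The key ingredient from \eqref{opt:4} is that $\nabla r(x^*)=y^*$; combined with the facts that $r = F - \tfrac{\mu}{4}\sqn{\cdot}$ is $L$-smooth and $\tfrac{\mu}{2}$-strongly convex, this lets us reuse the APAPC-style estimates verbatim inside $\bg_r$.

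First I would expand
\[
\frac{1}{\eta}\sqn{x^{k+1}-x^*} = \frac{1}{\eta}\sqn{x^k - x^*} + \frac{2}{\eta}\Dotprod{x^{k+1}-x^k, x^{k+1}-x^*} - \frac{1}{\eta}\sqn{x^{k+1}-x^k},
\]
then substitute line~\ref{alg:scary:line:x:2}, i.e.\ $\tfrac{1}{\eta}(x^{k+1}-x^k) = \alpha(x_g^k - x^{k+1}) - \nabla r(x_g^k) + y^{k+1}$, and use $\nabla r(x^*)=y^*$. This splits the cross term into an $\alpha$-part, a gradient-difference part $-2\Dotprod{\nabla r(x_g^k) - \nabla r(x^*), x^{k+1}-x^*}$, and the dual cross term $2\Dotprod{y^{k+1} - y^*, x^{k+1}-x^*}$ that survives into the conclusion. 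Young's inequality applied to the $\alpha$-part produces $\alpha\sqn{x_g^k - x^*} - \alpha\sqn{x^{k+1}-x^*}$.

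Next, using $x^k - x_g^k = (1-\tau)(x^k - x_f^k)$ and $x^{k+1}-x^k = \tfrac{2-\tau}{2\tau}(x_f^{k+1}-x_g^k)$ from lines~\ref{alg:scary:line:x:1} and~\ref{alg:scary:line:x:3}, I would rewrite $x^{k+1}-x_g^k = \tfrac{2-\tau}{2\tau}(x_f^{k+1}-x_g^k) - \tfrac{1-\tau}{\tau}(x_f^k - x_g^k)$, so that $-2\Dotprod{\nabla r(x_g^k)-\nabla r(x^*), x^{k+1}-x^*}$ decomposes into three standard pieces: one anchored at $x_g^k$, where $\tfrac{\mu}{2}$-strong convexity of $r$ gives $-2\bg_r(x_g^k,x^*)-\tfrac{\mu}{2}\sqn{x_g^k-x^*}$; one paired against $x_f^{k+1}-x_g^k$ with weight $-\tfrac{2-\tau}{\tau}$; and one paired against $x_f^k - x_g^k$ with weight $\tfrac{2(1-\tau)}{\tau}$. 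Splitting $-\tfrac{1}{\eta}\sqn{x^{k+1}-x^k}$ as $-\tfrac{1}{2\eta}\sqn{x^{k+1}-x^k} - \tfrac{(2-\tau)^2}{8\eta\tau^2}\sqn{x_f^{k+1}-x_g^k}$ and using the stepsize bound \eqref{scary:eta} $\eta \le \tfrac{1}{4\tau L} \le \tfrac{2-\tau}{4\tau L}$ together with $L$-smoothness of $r$ converts the first of these two pieces into $-\tfrac{2-\tau}{\tau}\bigl[\bg_r(x_f^{k+1},x^*)-\bg_r(x_g^k,x^*)\bigr]$; plain convexity of $r$ similarly turns the $x_f^k$ piece into $\tfrac{2(1-\tau)}{\tau}\bigl[\bg_r(x_f^k,x^*)-\bg_r(x_g^k,x^*)\bigr]$. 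Finally, Lemma~\ref{scary:lem:1} applied to the remaining $-\tfrac{1}{2\eta}\sqn{x^{k+1}-x^k}$ (whose hypothesis holds by \eqref{scary:alpha}) supplies $-\tfrac{\eta\delta}{4}\sqn{y^{k+1}-y^*}+\tfrac{\alpha}{4}\sqn{x^{k+1}-x^*}+\bg_r(x_g^k,x^*)$.

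The last step is pure bookkeeping. The coefficient of $\bg_r(x_g^k,x^*)$ sums to $-2 + \tfrac{2-\tau}{\tau} - \tfrac{2(1-\tau)}{\tau} + 1 = 0$; the coefficient of $\sqn{x_g^k - x^*}$ becomes $\alpha - \tfrac{\mu}{2} \le 0$ by \eqref{scary:alpha} and is dropped; and the coefficient of $\sqn{x^{k+1}-x^*}$ becomes $-\alpha + \tfrac{\alpha}{4} = -\tfrac{3\alpha}{4}$, exactly matching \eqref{scary:x}. The main obstacle I foresee is calibrating the quadratic coefficient $\tfrac{(2-\tau)^2}{8\eta\tau^2}$ against $\tfrac{2-\tau}{\tau}\cdot\tfrac{L}{2}$ so that $L$-smoothness of $r$ can be invoked under the stepsize constraint \eqref{scary:eta}; once that is in place, the identities from the momentum lines and the cancellation of all $\bg_r(x_g^k,x^*)$ contributions are essentially a transcription of the APAPC argument.
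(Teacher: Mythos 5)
Your proposal is correct and follows essentially the same route as the paper's proof: the same three-point expansion, substitution of line~\ref{alg:scary:line:x:2} with $\nabla r(x^*)=y^*$, Young's inequality on the $\alpha$-cross term, the same decomposition of $x^{k+1}-x_g^k$ via lines~\ref{alg:scary:line:x:1} and~\ref{alg:scary:line:x:3}, the same split of $-\tfrac{1}{\eta}\sqn{x^{k+1}-x^k}$ with one half fed to Lemma~\ref{scary:lem:1}, and identical coefficient bookkeeping (the $\bg_r(x_g^k,x^*)$ terms cancel, $\alpha-\tfrac{\mu}{2}\le 0$, and $-\alpha+\tfrac{\alpha}{4}=-\tfrac{3\alpha}{4}$). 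No gaps.
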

\begin{proof}
	Using line~\ref{alg:scary:line:x:2} of Algorithm~\ref{alg:scary} we get
	\begin{align*}
	\frac{1}{\eta}\sqn{x^{k+1} - x^*}
	&=
	\frac{1}{\eta}\sqn{x^k - x^*}
	+
	\frac{2}{\eta}\<x^{k+1} - x^k, x^{k+1} - x^*>
	-
	\frac{1}{\eta}\sqn{x^{k+1}-x^k}
	\\&=
	\frac{1}{\eta}\sqn{x^k - x^*}
	-
	\frac{1}{\eta}\sqn{x^{k+1}-x^k}
	+
	2\alpha\<  x_g^k - x^{k+1}, x^{k+1} - x^*>
	\\&-
	2\< \nabla r(x_g^k) - y^{k+1}, x^{k+1} - x^*>
	\\&=
	\frac{1}{\eta}\sqn{x^k - x^*}
	-
	\frac{1}{\eta}\sqn{x^{k+1}-x^k}
	+
	2\alpha\<  x_g^k - x^*, x^{k+1} - x^*>
	-
	2\alpha\sqn{x^{k+1} - x^*}
	\\&-
	2\< \nabla r(x_g^k) - y^{k+1}, x^{k+1} - x^*>
	\\&\leq
	\frac{1}{\eta}\sqn{x^k - x^*}
	-
	\frac{1}{\eta}\sqn{x^{k+1}-x^k}
	+
	\alpha \sqn{x_g^k - x^*}
	-
	\alpha\sqn{x^{k+1} - x^*}
	\\&-
	2\< \nabla r(x_g^k) - y^{k+1}, x^{k+1} - x^*>.
	\end{align*}
	From optimality condition \eqref{opt:4} it follows that $\nabla r(x^*) = y^*$ and hence
	\begin{align*}
	\frac{1}{\eta}\sqn{x^{k+1} - x^*}
	&\leq
	\frac{1}{\eta}\sqn{x^k - x^*}
	+
	\alpha \sqn{x_g^k - x^*}
	-
	\alpha\sqn{x^{k+1} - x^*}
	-
	\frac{1}{\eta}\sqn{x^{k+1}-x^k}
	\\&-
	2\< \nabla r(x_g^k) - \nabla r(x^*), x^{k+1} - x^*>
	+
	2\< y^{k+1} - y^*, x^{k+1} - x^*>.
	\end{align*}
	Using lemma~\ref{scary:lem:1} we get
	\begin{align*}
	\frac{1}{\eta}\sqn{x^{k+1} - x^*}
	&\leq
	\frac{1}{\eta}\sqn{x^k - x^*}
	+
	\alpha \sqn{x_g^k - x^*}
	-
	\alpha\sqn{x^{k+1} - x^*}
	-
	\frac{1}{2\eta}\sqn{x^{k+1}-x^k}
	\\&-
	\frac{\eta\delta}{4}\sqn{y^{k+1} - y^*}
	+
	\frac{\alpha}{4}\sqn{x^{k+1} - x^*}
	+
	\bg_{r}(x_g^k,x^*)
	\\&-
	2\< \nabla r(x_g^k) - \nabla r(x^*), x^{k+1} - x^*>
	+
	2\< y^{k+1} - y^*, x^{k+1} - x^*>
	\\&\leq
	\frac{1}{\eta}\sqn{x^k - x^*}
	+
	\alpha \sqn{x_g^k - x^*}
	-
	\frac{3\alpha}{4}\sqn{x^{k+1} - x^*}
	-
	\frac{1}{2\eta}\sqn{x^{k+1}-x^k}
	\\&-
	2\< \nabla r(x_g^k) - \nabla r(x^*), x^{k+1} - x^*>
	+
	2\< y^{k+1} - y^*, x^{k+1} - x^*>
	\\&+
	\bg_{r}(x_g^k,x^*)
	-
	\frac{\eta\delta}{4}\sqn{y^{k+1} - y^*}.
	\end{align*}
	Using lines~\ref{alg:scary:line:x:1} and~\ref{alg:scary:line:x:3} of Algorithm~\ref{alg:scary} we get
	\begin{align*}
	\frac{1}{\eta}\sqn{x^{k+1} - x^*}
	&\leq
	\frac{1}{\eta}\sqn{x^k - x^*}
	+
	\alpha \sqn{x_g^k - x^*}
	-
	\frac{3\alpha}{4}\sqn{x^{k+1} - x^*}
	\\&+
	\frac{2(1-\tau)}{\tau}\< \nabla r(x_g^k) - \nabla r(x^*), x_f^k - x_g^k>
	-
	2\< \nabla r(x_g^k) - \nabla r(x^*), x_g^k - x^*>
	\\&-
	\frac{2-\tau}{\tau}\< \nabla r(x_g^k) - \nabla r(x^*),x_f^{k+1} - x_g^k>
	-
	\frac{(2-\tau)^2}{8\eta\tau^2}\sqn{x_f^{k+1}-x_g^k}
	\\&+
	\bg_{r}(x_g^k,x^*)
	-
	\frac{\eta\delta}{4}\sqn{y^{k+1} - y^*}
	+
	2\< y^{k+1} - y^*, x^{k+1} - x^*>
	\\&\leq
	\frac{1}{\eta}\sqn{x^k - x^*}
	+
	\alpha \sqn{x_g^k - x^*}
	-
	\frac{3\alpha}{4}\sqn{x^{k+1} - x^*}
	\\&+
	\frac{2(1-\tau)}{\tau}\< \nabla r(x_g^k) - \nabla r(x^*), x_f^k - x_g^k>
	-
	2\< \nabla r(x_g^k) - \nabla r(x^*), x_g^k - x^*>
	\\&-
	\frac{2-\tau}{\tau}
	\left(
	\< \nabla r(x_g^k) - \nabla r(x^*),x_f^{k+1} - x_g^k>
	+
	\frac{1}{8\eta\tau}\sqn{x_f^{k+1}-x_g^k}
	\right)
	\\&+
	\bg_{r}(x_g^k,x^*)
	-
	\frac{\eta\delta}{4}\sqn{y^{k+1} - y^*}
	+
	2\< y^{k+1} - y^*, x^{k+1} - x^*>.
	\end{align*}
	Using $\frac{\mu}{2}$-strong convexity and $L$-smoothness of $r(x)$ and $\eta$ defined by \eqref{scary:eta} we get
	\begin{align*}
	\frac{1}{\eta}\sqn{x^{k+1} - x^*}
	&\leq
	\frac{1}{\eta}\sqn{x^k - x^*}
	+
	\alpha \sqn{x_g^k - x^*}
	-
	\frac{3\alpha}{4}\sqn{x^{k+1} - x^*}
	\\&+
	\frac{2(1-\tau)}{\tau}(\bg_r(x_f^k,x^*) - \bg_r(x_g^k,x^*))
	-
	2\bg_r(x_g^k,x^*) - \frac{\mu}{2}\sqn{x_g^k - x^*}
	\\&-
	\frac{2-\tau}{\tau}
	\left(
	\< \nabla r(x_g^k) - \nabla r(x^*),x_f^{k+1} - x_g^k>
	+
	\frac{L}{2}\sqn{x_f^{k+1}-x_g^k}
	\right)
	\\&+
	\bg_{r}(x_g^k,x^*)
	-
	\frac{\eta\delta}{4}\sqn{y^{k+1} - y^*}
	+
	2\< y^{k+1} - y^*, x^{k+1} - x^*>
	\\&\leq
	\frac{1}{\eta}\sqn{x^k - x^*}
	+
	\left(\alpha - \frac{\mu}{2}\right) \sqn{x_g^k - x^*}
	-
	\frac{3\alpha}{4}\sqn{x^{k+1} - x^*}
	\\&+
	\frac{2(1-\tau)}{\tau}(\bg_r(x_f^k,x^*) - \bg_r(x_g^k,x^*))
	-
	\frac{2-\tau}{\tau}(\bg_r(x_f^{k+1},x^*) - \bg_r(x_g^k,x^*))
	\\&-
	\bg_r(x_g^k,x^*)
	-
	\frac{\eta\delta}{4}\sqn{y^{k+1} - y^*}
	+
	2\< y^{k+1} - y^*, x^{k+1} - x^*>
	\\&=
	\frac{1}{\eta}\sqn{x^k - x^*}
	+
	\left(\alpha - \frac{\mu}{2}\right) \sqn{x_g^k - x^*}
	-
	\frac{3\alpha}{4}\sqn{x^{k+1} - x^*}
	\\&+
	\frac{2(1-\tau)}{\tau}\bg_r(x_f^k,x^*)
	-
	\frac{2-\tau}{\tau}\bg_r(x_f^{k+1},x^*)
	\\&-
	\frac{\eta\delta}{4}\sqn{y^{k+1} - y^*}
	+
	2\< y^{k+1} - y^*, x^{k+1} - x^*>.
	\end{align*}
	Using $\alpha$ defined by \eqref{scary:alpha} we get
	\begin{align*}
	\frac{1}{\eta}\sqn{x^{k+1} - x^*}
	&\leq
	\frac{1}{\eta}\sqn{x^k - x^*}
	-
	\frac{3\alpha}{4}\sqn{x^{k+1} - x^*}
	+
	\frac{2(1-\tau)}{\tau}\bg_r(x_f^k,x^*)
	-
	\frac{2-\tau}{\tau}\bg_r(x_f^{k+1},x^*)
	\\&-
	\frac{\eta\delta}{4}\sqn{y^{k+1} - y^*}
	+
	2\< y^{k+1} - y^*, x^{k+1} - x^*>.
	\end{align*}	
\end{proof}

\begin{lemma}\label{scary:lem:2}
	For all $y_1,y_2\in\R^{nd}$ and $z_1,z_2\in \range \mW$ the following inequality holds:
	\begin{equation}
	\bg_h((y_1,z_1),(y_2,z_2)) \leq \left(\frac{2}{\mu} + \frac{\nu}{2}\right)\sqn{y_1-y_2}+\frac{2}{\mu}\sqn{z_1-z_2}.
	\end{equation}
\end{lemma}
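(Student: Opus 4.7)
The plan is to exploit the fact that $h$ is a pure quadratic, so its Bregman divergence admits a clean closed form, and then a single application of Young's inequality closes the gap. The role of the assumption $z_1,z_2 \in \range \mW$ is purely to keep the arguments inside the ambient space $\sX$; it is not used in the estimate itself.

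First, I would observe that for any quadratic $h(u) = \tfrac{1}{2}\Dotprod{Mu,u}$ with symmetric $M$, a direct expansion of $h(u_1) - h(u_2) - \Dotprod{\nabla h(u_2), u_1-u_2}$ gives the identity $\bg_h(u_1,u_2) = h(u_1 - u_2)$. Applying this with $u = (y,z)$ to
\[
h(y,z) = \frac{1}{\mu}\sqn{y+z} + \frac{\nu}{2}\sqn{y}
\]
yields
\[
\bg_h((y_1,z_1),(y_2,z_2)) = \frac{1}{\mu}\sqn{(y_1-y_2)+(z_1-z_2)} + \frac{\nu}{2}\sqn{y_1-y_2}.
\]

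Second, I would use the elementary bound $\sqn{a+b} \leq 2\sqn{a} + 2\sqn{b}$ on the first term with $a = y_1-y_2$ and $b = z_1-z_2$, obtaining
\[
\frac{1}{\mu}\sqn{(y_1-y_2)+(z_1-z_2)} \leq \frac{2}{\mu}\sqn{y_1-y_2} + \frac{2}{\mu}\sqn{z_1-z_2}.
\]
Adding the leftover $\frac{\nu}{2}\sqn{y_1-y_2}$ produces exactly the claimed inequality.

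There is essentially no obstacle: the only subtle point is verifying the Bregman-of-a-quadratic identity, which is a one-line computation. Note that the bound is not tight (Young's inequality loses a factor of $2$) but the form is well-suited to the subsequent Lyapunov analysis of Algorithm~\ref{alg:scary}, where the $y$ and $z$ error terms will need to be handled by separate contraction factors.
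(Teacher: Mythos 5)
Your proof is correct and follows essentially the same route as the paper's: both reduce the Bregman divergence of the quadratic $h$ to $h$ evaluated at the difference of arguments, i.e.\ $\bg_h((y_1,z_1),(y_2,z_2)) = \frac{1}{\mu}\sqn{(y_1-y_2)+(z_1-z_2)} + \frac{\nu}{2}\sqn{y_1-y_2}$, and then apply $\sqn{a+b}\leq 2\sqn{a}+2\sqn{b}$. The only difference is that you make the ``Bregman divergence of a quadratic'' identity explicit, whereas the paper states the first equality directly from the definition of $\bg_h$.
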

\begin{proof}
	It follows from from the definition of $\bg_h$:
	\begin{align*}
	\bg_h((y_1,z_1),(y_2,z_2)) &= \frac{1}{\mu}\sqn{y_1+z_1 - y_2-z_2} + \frac{\nu}{2}\sqn{y_1 - y_2}
	\\&\leq
	\left(\frac{2}{\mu} + \frac{\nu}{2}\right)\sqn{y_1-y_2}+\frac{2}{\mu}\sqn{z_1-z_2}.
	\end{align*}
\end{proof}

\begin{lemma}\label{scary:lem:yz}
	Let $\theta$ satisfy
	\begin{equation}\label{scary:theta}
	\theta \leq \left[\sigma \left(\frac{4}{\mu} + \nu\right)\right]^{-1}.
	\end{equation}
	Let $\lambda$ satisfy
	\begin{equation}\label{scary:lambda}
	\lambda \leq \left[\frac{4\sigma \lambda_{\max}(\mW) }{\mu}\right]^{-1}.
	\end{equation}
	Let $\beta$ satisfy
	\begin{equation}\label{scary:beta}
	\beta \leq \min\left\{\frac{1}{\mu}, \frac{\nu}{3}\right\}.
	\end{equation}
	Let $\gamma$ satisfy
	\begin{equation}\label{scary:gamma}
	\gamma \leq \lambda_{\min}^+(\mW)\beta.
	\end{equation}
	Then the following inequality holds:
	\begin{align}\label{scary:yz}
	\sqN{\vect{y^{k+1} - y^*\\z^{k+1}-z^*}}_{\mM}
	&\leq
	\sqN{\vect{y^{k} - y^*\\z^{k}-z^*}}_{\mM}
	-
	(\beta - 2\nu)\sqn{y^{k+1} - y^*}
	-
	\gamma\sqnw{z^{k+1} - z^*}
	\\&+
	\frac{2(1-\sigma)}{\sigma}
	\bg_h((y_f^k,z_f^k),(y^*,z^*))
	-
	\frac{2}{\sigma}
	\bg_h((y_f^{k+1},z_f^{k+1}),(y^*,z^*))
	\\&-
	2\<x^{k+1} - x^*,y^{k+1} - y^*>,
	\end{align}
	where $\mM\in \R^{2nd\times 2nd}$ is a matrix defined by
	\begin{equation}\label{scary:M}
	\mM = \begin{bmatrix}
	\frac{1}{\theta}\mI&0\\0&\frac{1}{\lambda}\mWp
	\end{bmatrix}.
	\end{equation}
\end{lemma}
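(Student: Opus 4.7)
The plan is to mirror the structure of Lemma~\ref{scary:lem:x}, but now for the $(y,z)$-block with the $\mM$-weighted norm, tracking carefully where the extra strong-convexity of $h$ in $y$ (through the $\nu$-term) enters.

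First I would expand $\|(y^{k+1}-y^*, z^{k+1}-z^*)\|_\mM^2$ via the quadratic identity to isolate the cross term $2\langle \mM(y^{k+1}-y^k, z^{k+1}-z^k), (y^{k+1}-y^*, z^{k+1}-z^*)\rangle$. Next, using lines~\ref{alg:scary:line:y:2} and \ref{alg:scary:line:z:2} of the algorithm together with the optimality relations \eqref{opt:4}--\eqref{opt:6} (which give $\nabla_y h(y^*,z^*) = \nu y^* - x^*$ and $\mW \nabla_z h(y^*,z^*)=0$), I would rewrite $\mM(y^{k+1}-y^k, z^{k+1}-z^k)$ in a centered form involving $\beta(y_g^k-y^{k+1})$, $\gamma(z_g^k-z^{k+1})$, the gradient differences $\nabla h(y_g^k,z_g^k)-\nabla h(y^*,z^*)$, the coupling term $-(x^{k+1}-x^*)$, and $\nu(y^{k+1}-y^*)$. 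The fact that $z^{k+1},z^k,z^*\in\range\mW$ lets $\mWp\mW$ act as identity where needed.

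Then I would apply the identity $2\langle a,b\rangle = \|a+b\|^2 - \|a\|^2 - \|b\|^2$ to the $2\beta\langle y_g^k-y^{k+1},y^{k+1}-y^*\rangle$ and $2\gamma\langle z_g^k-z^{k+1},z^{k+1}-z^*\rangle_\mWp$ terms. Combining the resulting $-\beta\|y^{k+1}-y^*\|^2$ with the $+2\nu\|y^{k+1}-y^*\|^2$ term of Step~2 immediately yields the claimed $-(\beta-2\nu)\|y^{k+1}-y^*\|^2$ contribution, while $-\gamma\|z^{k+1}-z^*\|_\mWp^2$ is already in the desired form. To handle the $-2\langle \nabla h(y_g^k,z_g^k)-\nabla h(y^*,z^*), (y^{k+1}-y^*, z^{k+1}-z^*)\rangle$ piece, I would use the extrapolation identity $(y^{k+1}-y^*, z^{k+1}-z^*) = \tfrac{1}{\sigma}[(y_f^{k+1},z_f^{k+1})-(y^*,z^*)] - \tfrac{1-\sigma}{\sigma}[(y_f^k,z_f^k)-(y^*,z^*)]$ (derived from lines~\ref{alg:scary:line:y:1}--\ref{alg:scary:line:y:3} and the $z$-analogs), combined with the three-point Bregman identity $\langle \nabla h(a)-\nabla h(b), c-a\rangle = \bg_h(c,b) - \bg_h(a,b) - \bg_h(c,a)$ and the symmetry $\bg_h(a,b)=\bg_h(b,a)$ available since $h$ is quadratic. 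This produces exactly the target $-\tfrac{2}{\sigma}\bg_h((y_f^{k+1},z_f^{k+1}),(y^*,z^*)) + \tfrac{2(1-\sigma)}{\sigma}\bg_h((y_f^k,z_f^k),(y^*,z^*))$ pair plus three residual terms: $+\tfrac{2}{\sigma}\bg_h((y_f^{k+1},z_f^{k+1}),(y_g^k,z_g^k))$, $-\tfrac{2(1-\sigma)}{\sigma}\bg_h((y_f^k,z_f^k),(y_g^k,z_g^k))$, and $-2\bg_h((y_g^k,z_g^k),(y^*,z^*))$.

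The first residual is absorbed by $-\|(y^{k+1}-y^k,z^{k+1}-z^k)\|_\mM^2$: using Lemma~\ref{scary:lem:2} and $y_f^{k+1}-y_g^k=\sigma(y^{k+1}-y^k)$, $z_f^{k+1}-z_g^k=\sigma(z^{k+1}-z^k)$, this term is at most $\sigma(\tfrac{4}{\mu}+\nu)\|y^{k+1}-y^k\|^2 + \tfrac{4\sigma}{\mu}\|z^{k+1}-z^k\|^2$, which conditions~\eqref{scary:theta} and \eqref{scary:lambda} absorb after invoking $\|z^{k+1}-z^k\|^2 \leq \lambda_{\max}(\mW)\|z^{k+1}-z^k\|_\mWp^2$. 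The second residual is $\leq 0$ and dropped, together with $-\beta\|y_g^k-y^{k+1}\|^2$ and $-\gamma\|z_g^k-z^{k+1}\|_\mWp^2$ from Step~3.

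The main obstacle is the remaining algebra: we must show that the ``residual'' positive terms $\beta\|y_g^k-y^*\|^2 + \gamma\|z_g^k - z^*\|_\mWp^2$ are dominated by $-2\bg_h((y_g^k,z_g^k),(y^*,z^*)) = -\tfrac{2}{\mu}\|(y_g^k-y^*)+(z_g^k-z^*)\|^2 - \nu\|y_g^k-y^*\|^2$. Writing $u=y_g^k-y^*$ and $v=z_g^k-z^*$ (with $v\in\range\mW$), this reduces to showing negative semidefiniteness of
\[ Q(u,v) \eqdef (\beta-\nu)\|u\|^2 + \gamma\|v\|_\mWp^2 - \tfrac{2}{\mu}\|u+v\|^2. \]
After splitting $u=u_\parallel+u_\perp$ with $u_\parallel\in\range\mW$ and $u_\perp\in\ker\mW$, the $u_\perp$ contribution is $(\beta-\nu-\tfrac{2}{\mu})\|u_\perp\|^2 \leq 0$ since $\beta\leq\nu/3<\nu$. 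On each spectral mode of $\mW$ restricted to $\range\mW$ (eigenvalue $\lambda\in[\lambda_{\min}^+(\mW),\lambda_{\max}(\mW)]$), the problem becomes a $2\times 2$ quadratic form whose negative semidefiniteness reduces via a direct determinant computation to the inequality $\tfrac{2\gamma}{\lambda\mu} \leq (\nu-\beta)(\tfrac{2}{\mu} - \tfrac{\gamma}{\lambda})$; using $\gamma\leq\lambda_{\min}^+(\mW)\beta\leq\lambda\beta$, $\beta\leq\nu/3$, and $\beta\leq 1/\mu$, this inequality follows (in the worst case $\lambda=\lambda_{\min}^+(\mW)$ it becomes tight up to the slack given by $\beta\leq\nu/3$). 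This spectral argument is the delicate step of the proof and is where all three parameter conditions \eqref{scary:theta}, \eqref{scary:beta}, \eqref{scary:gamma} must work together.
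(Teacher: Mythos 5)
Your proposal is correct and follows essentially the same route as the paper's own proof: the same expansion of the $\mM$-norm via the update lines and the optimality conditions \eqref{opt:5}--\eqref{opt:6} (which supply the $+2\nu\sqn{y^{k+1}-y^*}$ and coupling terms), the same Bregman telescoping through the $(y_f,z_f)$/$(y_g,z_g)$ interpolation, and the same absorption of $\bg_h((y_f^{k+1},z_f^{k+1}),(y_g^k,z_g^k))$ into $-\sqn{(y^{k+1}-y^k,z^{k+1}-z^k)}_{\mM}$ via Lemma~\ref{scary:lem:2} together with \eqref{scary:theta}, \eqref{scary:lambda}, and $\sqn{z^{k+1}-z^k}\le\lambda_{\max}(\mW)\sqnw{z^{k+1}-z^k}$. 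The only (valid) deviation is the final step: where you establish negative semidefiniteness of $Q(u,v)$ by a spectral decomposition and a $2\times2$ determinant check, the paper takes a cruder but shorter path — it bounds $\gamma\sqnw{z_g^k-z^*}\le\tfrac{\gamma}{\lambda_{\min}^+(\mW)}\sqn{z_g^k-z^*}\le\beta\sqn{z_g^k-z^*}$, uses $\tfrac{2}{\mu}\ge 2\beta$ and the elementary inequality $-2\sqn{u+v}\le 2\sqn{u}-\sqn{v}$, and concludes with $(3\beta-\nu)\sqn{u}\le 0$ — relying on exactly the same conditions \eqref{scary:beta} and \eqref{scary:gamma} that power your determinant argument.
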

\begin{proof}
	Using line~\ref{alg:scary:line:y:2} of Algorithm~\ref{alg:scary} we get
	\begin{align*}
	\frac{1}{\theta}\sqn{y^{k+1} - y^*}
	&=
	\frac{1}{\theta}\sqn{y^k - y^*}
	+
	\frac{2}{\theta}\<y^{k+1} - y^k,y^{k+1} - y^*>
	-
	\frac{1}{\theta}\sqn{y^{k+1} - y^k}
	\\&=
	\frac{1}{\theta}\sqn{y^k - y^*}
	-
	\frac{1}{\theta}\sqn{y^{k+1} - y^k}
	+
	2\beta\<  y_g^k - y^{k+1} ,y^{k+1} - y^*>
	\\&-
	2\<\nabla_y h(y_g^k,z_g^k) - \nu y^{k+1} +  x^{k+1},y^{k+1} - y^*>
	\\&=
	\frac{1}{\theta}\sqn{y^k - y^*}
	-
	\frac{1}{\theta}\sqn{y^{k+1} - y^k}
	+
	2\beta\<  y_g^k - y^* ,y^{k+1} - y^*>
	-
	2\beta\sqn{y^{k+1} - y^*}
	\\&-
	2\<\nabla_y h(y_g^k,z_g^k) - \nu y^{k+1} +  x^{k+1},y^{k+1} - y^*>
	\\&\leq
	\frac{1}{\theta}\sqn{y^k - y^*}
	+
	\beta\sqn{y_g^k - y^*}
	-
	\beta\sqn{y^{k+1} - y^*}
	-
	\frac{1}{\theta}\sqn{y^{k+1} - y^k}
	\\&-
	2\<\nabla_y h(y_g^k,z_g^k) - \nu y^{k+1} +  x^{k+1},y^{k+1} - y^*>.
	\end{align*}
	From optimality condition \eqref{opt:5} it follows that $x^* = -\frac{2}{\mu}(y^*+z^*) = -\nabla_y h(y^*,z^*) + \nu y^*$ and hence
	\begin{align*}
	\frac{1}{\theta}\sqn{y^{k+1} - y^*}
	&\leq
	\frac{1}{\theta}\sqn{y^k - y^*}
	+
	\beta\sqn{y_g^k - y^*}
	-
	\beta\sqn{y^{k+1} - y^*}
	-
	\frac{1}{\theta}\sqn{y^{k+1} - y^k}
	\\&-
	2\<\nabla_y h(y_g^k,z_g^k) - \nabla_y h(y^*,z^*) ,y^{k+1} - y^*>
	\\&+
	2\nu\sqn{y^{k+1} - y^*}
	-
	2\<x^{k+1} - x^*,y^{k+1} - y^*>
	\\&=
	\frac{1}{\theta}\sqn{y^k - y^*}
	-
	(\beta - 2\nu)\sqn{y^{k+1} - y^*}
	+
	\beta\sqn{y_g^k - y^*}
	-
	\frac{1}{\theta}\sqn{y^{k+1} - y^k}
	\\&-
	2\<\nabla_y h(y_g^k,z_g^k) - \nabla_y h(y^*,z^*) ,y^{k+1} - y^*>
	-
	2\<x^{k+1} - x^*,y^{k+1} - y^*>.
	\end{align*}
	Using lines~\ref{alg:scary:line:y:1} and~\ref{alg:scary:line:y:3} of Algorithm~\ref{alg:scary} we get
	\begin{align*}
	\frac{1}{\theta}\sqn{y^{k+1} - y^*}
	&\leq
	\frac{1}{\theta}\sqn{y^k - y^*}
	-
	(\beta - 2\nu)\sqn{y^{k+1} - y^*}
	+
	\beta\sqn{y_g^k - y^*}
	\\&+
	\frac{2(1-\sigma)}{\sigma}\<\nabla_y h(y_g^k,z_g^k) - \nabla_y h(y^*,z^*) ,y_f^k - y_g^k>
	\\&-
	\frac{2}{\sigma}\<\nabla_y h(y_g^k,z_g^k) - \nabla_y h(y^*,z^*) ,y_f^{k+1} - y_g^k>
	-
	\frac{1}{\theta\sigma^2}\sqn{y_f^{k+1} - y_g^k}
	\\&-
	2\<\nabla_y h(y_g^k,z_g^k) - \nabla_y h(y^*,z^*) ,y_g^k - y^*>
	-
	2\<x^{k+1} - x^*,y^{k+1} - y^*>.
	\end{align*}
	Using $\theta$ defined by \eqref{scary:theta} we get
	\begin{align}\label{scary:y}
	\frac{1}{\theta}\sqn{y^{k+1} - y^*}
	&\leq
	\frac{1}{\theta}\sqn{y^k - y^*}
	-
	(\beta - 2\nu)\sqn{y^{k+1} - y^*}
	+
	\beta\sqn{y_g^k - y^*}
	\\&+
	\frac{2(1-\sigma)}{\sigma}\<\nabla_y h(y_g^k,z_g^k) - \nabla_y h(y^*,z^*) ,y_f^k - y_g^k>
	\\&-
	\frac{2}{\sigma}\left(\<\nabla_y h(y_g^k,z_g^k) - \nabla_y h(y^*,z^*) ,y_f^{k+1} - y_g^k>
	+
	\left(\frac{2}{\mu}+\frac{\nu}{2}\right)\sqn{y_f^{k+1} - y_g^k}\right)
	\\&-
	2\<\nabla_y h(y_g^k,z_g^k) - \nabla_y h(y^*,z^*) ,y_g^k - y^*>
	-
	2\<x^{k+1} - x^*,y^{k+1} - y^*>.
	\end{align}
	Using line~\ref{alg:scary:line:z:2} of Algorithm~\ref{alg:scary} we get
	\begin{align*}
	\frac{1}{\lambda}\sqnw{z^{k+1} - z^*}
	&=
	\frac{1}{\lambda}\sqnw{z^k-z^*}
	+
	\frac{2}{\lambda}\< z^{k+1} - z^k,\mW^\dagger(z^{k+1} - z^*)>
	-
	\frac{1}{\lambda}\sqnw{z^{k+1} - z^k}
	\\&=
	\frac{1}{\lambda}\sqnw{z^k-z^*}
	-
	\frac{1}{\lambda}\sqnw{z^{k+1} - z^k}
	+
	2\gamma\< z_g^k - z^{k+1},\mW^\dagger(z^{k+1} - z^*)>
	\\&-
	2\<\mW \nabla_z h(y_g^k,z_g^k),\mW^\dagger(z^{k+1} - z^*)>
	\\&=
	\frac{1}{\lambda}\sqnw{z^k-z^*}
	+
	2\gamma\< z_g^k - z^*,\mW^\dagger(z^{k+1} - z^*)>
	-
	2\gamma\sqnw{z^{k+1} - z^*}
	\\&-
	\frac{1}{\lambda}\sqnw{z^{k+1} - z^k}
	-
	2\<\mW \nabla_z h(y_g^k,z_g^k), \mW^\dagger(z^{k+1} - z^*)>
	\\&\leq
	\frac{1}{\lambda}\sqnw{z^k-z^*}
	+
	\gamma \sqnw{z_g^k - z^*}
	-
	\gamma\sqnw{z^{k+1} - z^*}
	-
	\frac{1}{\lambda}\sqnw{z^{k+1} - z^k}
	\\&-
	2\<\mW\nabla_z h(y_g^k,z_g^k), \mW^\dagger(z^{k+1} - z^*)>.
	\end{align*}
	From optimality condition \eqref{opt:6} it follows that $\mW \nabla_z h(y^*,z^*) = 0$ and hence
	\begin{align*}
	\frac{1}{\lambda}\sqnw{z^{k+1} - z^*}
	&\leq
	\frac{1}{\lambda}\sqnw{z^k-z^*}
	+
	\gamma \sqnw{z_g^k - z^*}
	-
	\gamma\sqnw{z^{k+1} - z^*}
	-
	\frac{1}{\lambda}\sqnw{z^{k+1} - z^k}
	\\&-
	2\<\mW(\nabla_z h(y_g^k,z_g^k)-\nabla_z h(y^*,z^*) ), \mW^\dagger(z^{k+1} - z^*)>
	\\&=
	\frac{1}{\lambda}\sqnw{z^k-z^*}
	+
	\gamma \sqnw{z_g^k - z^*}
	-
	\gamma\sqnw{z^{k+1} - z^*}
	-
	\frac{1}{\lambda}\sqnw{z^{k+1} - z^k}
	\\&-
	2\<\nabla_z h(y_g^k,z_g^k)-\nabla_z h(y^*,z^*) , \mW\mW^\dagger(z^{k+1} - z^*)>.
	\end{align*}
	It's easy to observe that $z^k, z^* \in \range \mW$  for all $k=0,1,2,\ldots$, which implies
	\begin{align*}
	\mW\mW^\dagger (z^{k+1} - z^*) = z^{k+1} - z^* \;\text{and}\;
	\sqnw{z^{k+1} - z^k} \geq \frac{1}{\lambda_{\max}(\mW)}\sqn{z^{k+1} - z^k}.
	\end{align*}
	Hence,
	\begin{align*}
	\frac{1}{\lambda}\sqnw{z^{k+1} - z^*}
	&\leq
	\frac{1}{\lambda}\sqnw{z^k-z^*}
	-
	\gamma\sqnw{z^{k+1} - z^*}
	+
	\frac{\gamma}{\lambda_{\min}^+(\mW)} \sqn{z_g^k - z^*}
	\\&-
	\frac{1}{\lambda\cdot\lambda_{\max}(\mW)}\sqn{z^{k+1} - z^k}
	-
	2\<\nabla_z h(y_g^k,z_g^k)-\nabla_z h(y^*,z^*) ,z^{k+1} - z^*>.
	\end{align*}
	Using lines~\ref{alg:scary:line:z:1} and~\ref{alg:scary:line:z:3} of Algorithm~\ref{alg:scary} we get
	\begin{align*}
	\frac{1}{\lambda}\sqnw{z^{k+1} - z^*}
	&\leq
	\frac{1}{\lambda}\sqnw{z^k-z^*}
	-
	\gamma\sqnw{z^{k+1} - z^*}
	+
	\frac{\gamma}{\lambda_{\min}^+(\mW)} \sqnw{z_g^k - z^*}
	\\&+
	\frac{2(1-\sigma)}{\sigma}
	\<\nabla_z h(y_g^k,z_g^k)-\nabla_z h(y^*,z^*) ,z_f^k - z_g^k>
	\\&-
	\frac{2}{\sigma}\<\nabla_z h(y_g^k,z_g^k)-\nabla_z h(y^*,z^*) ,z_f^{k+1} - z_g^k>
	-
	\frac{1}{\lambda\sigma^2\lambda_{\max}(\mW)}\sqn{z_f^{k+1} - z_g^k}\\
	&-
	2\<\nabla_z h(y_g^k,z_g^k)-\nabla_z h(y^*,z^*) , z_g^k - z^*>.
	\end{align*}
	Using $\lambda$ defined by \eqref{scary:lambda} we get
	\begin{align}\label{scary:z}
	\frac{1}{\lambda}\sqnw{z^{k+1} - z^*}
	&\leq
	\frac{1}{\lambda}\sqnw{z^k-z^*}
	-
	\gamma\sqnw{z^{k+1} - z^*}
	+
	\frac{\gamma}{\lambda_{\min}^+(\mW)} \sqnw{z_g^k - z^*}
	\\&+
	\frac{2(1-\sigma)}{\sigma}
	\<\nabla_z h(y_g^k,z_g^k)-\nabla_z h(y^*,z^*) ,z_f^k - z_g^k>
	\\&-
	\frac{2}{\sigma}
	\left(\<\nabla_z h(y_g^k,z_g^k)-\nabla_z h(y^*,z^*) ,z_f^{k+1} - z_g^k>
	-
	\frac{2}{\mu}\sqn{z_f^{k+1} - z_g^k}
	\right)\\
	&-
	2\<\nabla_z h(y_g^k,z_g^k)-\nabla_z h(y^*,z^*) , z_g^k - z^*>.
	\end{align}
	After combining \eqref{scary:y} and \eqref{scary:z} we get
	\begin{align*}
	\sqN{\vect{y^{k+1} - y^*\\z^{k+1}-z^*}}_{\mM}
	&\leq
	\sqN{\vect{y^{k} - y^*\\z^{k}-z^*}}_{\mM}
	-
	(\beta - 2\nu)\sqn{y^{k+1} - y^*}
	-
	\gamma\sqnw{z^{k+1} - z^*}
	\\&+
	\beta\sqn{y_g^k - y^*}
	+
	\frac{\gamma}{\lambda_{\min}^+(\mW)}\sqn{z_g^k - z^*}\\
	&+
	\frac{2(1-\sigma)}{\sigma}\Dotprod{\nabla h(y_g^k,z_g^k) - \nabla h(y^*,z^*),\vect{y_f^k\\z_f^k}-\vect{y_g^k\\z_g^k}}
	\\&
	-\frac{2}{\sigma}
	\Dotprod{\nabla h(y_g^k,z_g^k)-\nabla h(y^*,z^*),\vect{y_f^{k+1}\\z_f^{k+1}}-\vect{y_g^k\\z_g^k}}
	\\&-
	\frac{2}{\sigma}
	\left(\left(\frac{2}{\mu}+\frac{\nu}{2}\right)\sqn{y_f^{k+1} - z_g^k}
	+
	\frac{2}{\mu}\sqn{z_f^{k+1} - z_g^k}\right)
	\\&-
	2\Dotprod{\nabla h(y_g^k,z_g^k) - \nabla h(y^*,z^*), \vect{y_g^k\\z_g^k} - \vect{y^*\\z^*}}
	-
	2\<x^{k+1} - x^*,y^{k+1} - y^*>,
	\end{align*}
	where $\mM\in \R^{2nd\times 2nd}$ is a matrix defined by \eqref{scary:M}.
	Using convexity of $h(y,z)$ and the fact that $\nabla h(y,z) = \vect{\frac{2}{\mu}(y+z) + \nu y\\\frac{2}{\mu}(y+z)}$ we get
	\begin{align*}
	\sqN{\vect{y^{k+1} - y^*\\z^{k+1}-z^*}}_{\mM}
	&\leq
	\sqN{\vect{y^{k} - y^*\\z^{k}-z^*}}_{\mM}
	-
	(\beta - 2\nu)\sqn{y^{k+1} - y^*}
	-
	\gamma\sqnw{z^{k+1} - z^*}
	\\&+
	\beta\sqn{y_g^k - y^*}
	+
	\frac{\gamma}{\lambda_{\min}^+(\mW)}\sqn{z_g^k - z^*}\\
	&+
	\frac{2(1-\sigma)}{\sigma}
	\left[
	\bg_h((y_f^k,z_f^k),(y^*,z^*))
	-
	\bg_h((y_g^k,z_g^k),(y^*,z^*))
	\right]
	\\&
	-\frac{2}{\sigma}
	\Dotprod{\nabla h(y_g^k,z_g^k)-\nabla h(y^*,z^*),\vect{y_f^{k+1}\\z_f^{k+1}}-\vect{y_g^k\\z_g^k}}
	\\&-
	\frac{2}{\sigma}
	\left(\left(\frac{2}{\mu}+\frac{\nu}{2}\right)\sqn{y_f^{k+1} - z_g^k}
	+
	\frac{2}{\mu}\sqn{z_f^{k+1} - z_g^k}\right)
	\\&-
	\frac{4}{\mu}\sqn{y_g^k + z_g^k - y^* - z^*}
	-
	2\nu\sqn{y_g^k - y^*}
	-
	2\<x^{k+1} - x^*,y^{k+1} - y^*>.
	\end{align*}
	Using lemma~\ref{scary:lem:2} we can obtain
	\begin{align*}
	\sqN{\vect{y^{k+1} - y^*\\z^{k+1}-z^*}}_{\mM}
	&\leq
	\sqN{\vect{y^{k} - y^*\\z^{k}-z^*}}_{\mM}
	-
	(\beta - 2\nu)\sqn{y^{k+1} - y^*}
	-
	\gamma\sqnw{z^{k+1} - z^*}
	\\&+
	(\beta - 2\nu)\sqn{y_g^k - y^*}
	+
	\frac{\gamma}{\lambda_{\min}^+(\mW)}\sqn{z_g^k - z^*}
	-
	\frac{4}{\mu}\sqn{y_g^k + z_g^k - y^* - z^*}
	\\&+
	\frac{2(1-\sigma)}{\sigma}
	\left[
	\bg_h((y_f^k,z_f^k),(y^*,z^*))
	-
	\bg_h((y_g^k,z_g^k),(y^*,z^*))
	\right]
	\\&
	-\frac{2}{\sigma}\left[
	\bg_h((y_f^{k+1},z_f^{k+1}),(y^*,z^*))
	-
	\bg_h((y_g^k,z_g^k),(y^*,z^*))
	\right]
	\\&-
	2\<x^{k+1} - x^*,y^{k+1} - y^*>
	\\&=
	\sqN{\vect{y^{k} - y^*\\z^{k}-z^*}}_{\mM}
	-
	(\beta - 2\nu)\sqn{y^{k+1} - y^*}
	-
	\gamma\sqnw{z^{k+1} - z^*}
	\\&+
	(\beta - 2\nu)\sqn{y_g^k - y^*}
	+
	\frac{\gamma}{\lambda_{\min}^+(\mW)}\sqn{z_g^k - z^*}
	-
	\frac{4}{\mu}\sqn{y_g^k + z_g^k - y^* - z^*}
	\\&+
	\frac{2(1-\sigma)}{\sigma}
	\bg_h((y_f^k,z_f^k),(y^*,z^*))
	-
	\frac{2}{\sigma}
	\bg_h((y_f^{k+1},z_f^{k+1}),(y^*,z^*))
	\\&+
	2\bg_h((y_g^k,z_g^k), (y^*,z^*))
	-
	2\<x^{k+1} - x^*,y^{k+1} - y^*>
	\\&=
	\sqN{\vect{y^{k} - y^*\\z^{k}-z^*}}_{\mM}
	-
	(\beta - 2\nu)\sqn{y^{k+1} - y^*}
	-
	\gamma\sqnw{z^{k+1} - z^*}
	\\&+
	(\beta - 2\nu)\sqn{y_g^k - y^*}
	+
	\frac{\gamma}{\lambda_{\min}^+(\mW)}\sqn{z_g^k - z^*}
	-
	\frac{4}{\mu}\sqn{y_g^k + z_g^k - y^* - z^*}
	\\&+
	\frac{2(1-\sigma)}{\sigma}
	\bg_h((y_f^k,z_f^k),(y^*,z^*))
	-
	\frac{2}{\sigma}
	\bg_h((y_f^{k+1},z_f^{k+1}),(y^*,z^*))
	\\&+
	\frac{2}{\mu}\sqn{y_g^k + z_g^k - y^* - z^*}
	+
	\nu\sqn{y_g^k - y^*}
	-
	2\<x^{k+1} - x^*,y^{k+1} - y^*>
	\\&=
	\sqN{\vect{y^{k} - y^*\\z^{k}-z^*}}_{\mM}
	-
	(\beta - 2\nu)\sqn{y^{k+1} - y^*}
	-
	\gamma\sqnw{z^{k+1} - z^*}
	\\&+
	\frac{2(1-\sigma)}{\sigma}
	\bg_h((y_f^k,z_f^k),(y^*,z^*))
	-
	\frac{2}{\sigma}
	\bg_h((y_f^{k+1},z_f^{k+1}),(y^*,z^*))
	\\&+
	(\beta - \nu)\sqn{y_g^k - y^*}
	+
	\frac{\gamma}{\lambda_{\min}^+(\mW)}\sqn{z_g^k - z^*}
	-
	\frac{2}{\mu}\sqn{y_g^k + z_g^k - y^* - z^*}
	\\&-
	2\<x^{k+1} - x^*,y^{k+1} - y^*>.
	\end{align*}
	Using $\gamma$ defined by \eqref{scary:gamma} and the fact that $\beta \leq \frac{1}{\mu}$ which follows from \eqref{scary:beta} we get
	\begin{align*}
	\sqN{\vect{y^{k+1} - y^*\\z^{k+1}-z^*}}_{\mM}
	&\leq
	\sqN{\vect{y^{k} - y^*\\z^{k}-z^*}}_{\mM}
	-
	(\beta - 2\nu)\sqn{y^{k+1} - y^*}
	-
	\gamma\sqnw{z^{k+1} - z^*}
	\\&+
	\frac{2(1-\sigma)}{\sigma}
	\bg_h((y_f^k,z_f^k),(y^*,z^*))
	-
	\frac{2}{\sigma}
	\bg_h((y_f^{k+1},z_f^{k+1}),(y^*,z^*))
	\\&+
	(\beta - \nu)\sqn{y_g^k - y^*}
	+
	\beta\sqn{z_g^k - z^*}
	-
	2\beta\sqn{y_g^k + z_g^k - y^* - z^*}
	\\&-
	2\<x^{k+1} - x^*,y^{k+1} - y^*>
	\\&\leq
	\sqN{\vect{y^{k} - y^*\\z^{k}-z^*}}_{\mM}
	-
	(\beta - 2\nu)\sqn{y^{k+1} - y^*}
	-
	\gamma\sqnw{z^{k+1} - z^*}
	\\&+
	\frac{2(1-\sigma)}{\sigma}
	\bg_h((y_f^k,z_f^k),(y^*,z^*))
	-
	\frac{2}{\sigma}
	\bg_h((y_f^{k+1},z_f^{k+1}),(y^*,z^*))
	\\&+
	(\beta - \nu)\sqn{y_g^k - y^*}
	+
	\beta\sqn{z_g^k - z^*}
	-
	\beta\sqn{z_g^k - z^*}
	+2\beta\sqn{y_g^k - y^*}
	\\&-
	2\<x^{k+1} - x^*,y^{k+1} - y^*>
	\\&=
	\sqN{\vect{y^{k} - y^*\\z^{k}-z^*}}_{\mM}
	-
	(\beta - 2\nu)\sqn{y^{k+1} - y^*}
	-
	\gamma\sqnw{z^{k+1} - z^*}
	\\&+
	\frac{2(1-\sigma)}{\sigma}
	\bg_h((y_f^k,z_f^k),(y^*,z^*))
	-
	\frac{2}{\sigma}
	\bg_h((y_f^{k+1},z_f^{k+1}),(y^*,z^*))
	\\&+
	(3\beta - \nu)\sqn{y_g^k - y^*}
	-
	2\<x^{k+1} - x^*,y^{k+1} - y^*>.
	\end{align*}
	Using the fact that $\beta \leq \frac{\nu}{3}$ which follows from \eqref{scary:beta} we get
	\begin{align*}
	\sqN{\vect{y^{k+1} - y^*\\z^{k+1}-z^*}}_{\mM}
	&\leq
	\sqN{\vect{y^{k} - y^*\\z^{k}-z^*}}_{\mM}
	-
	(\beta - 2\nu)\sqn{y^{k+1} - y^*}
	-
	\gamma\sqnw{z^{k+1} - z^*}
	\\&+
	\frac{2(1-\sigma)}{\sigma}
	\bg_h((y_f^k,z_f^k),(y^*,z^*))
	-
	\frac{2}{\sigma}
	\bg_h((y_f^{k+1},z_f^{k+1}),(y^*,z^*))
	\\&-
	2\<x^{k+1} - x^*,y^{k+1} - y^*>.
	\end{align*}
\end{proof}

\begin{theorem}
	Let $\tau$ be defined by
	\begin{equation}
	\tau = \frac{1}{2}\sqrt{\frac{\mu}{L}}.
	\end{equation}
	Let $\alpha$ be defined by
	\begin{equation}
	\alpha = \frac{\mu}{2}.
	\end{equation}
	Let $\eta$ be defined by
	\begin{equation}
	\eta = \frac{1}{2\sqrt{\mu L}}.
	\end{equation}
	Let $\sigma$ be defined by
	\begin{equation}
	\sigma = \frac{1}{18}\sqrt{\frac{\mu \lambda_{\min}^+(\mW)}{L \lambda_{\max}(\mW)}}.
	\end{equation}
	Let $\nu$ be defined by
	\begin{equation}
	\nu = \frac{3}{80L}.
	\end{equation}
	Let $\beta$ be defined by
	\begin{equation}
	\beta = \frac{1}{80L}.
	\end{equation}
	Let $\theta$ be defined by
	\begin{equation}
	\theta = \frac{18\sqrt{\mu L \lambda_{\max}(\mW)}}{5\sqrt{ \lambda_{\min}^+(\mW)}}
	\end{equation}
	Let $\gamma$ be defined by
	\begin{equation}
	\gamma = \frac{\lambda_{\min}^+(\mW)}{80 L}.
	\end{equation}
	Let $\lambda$ be defined by
	\begin{equation}
	\lambda = \frac{9 \sqrt{\mu L}}{2 \sqrt{ \lambda_{\min}^+(\mW)\lambda_{\max}(\mW)}}.
	\end{equation}

	where $\mP\in \R^{3nd\times 3nd}$ is a matrix defined by 
	\begin{equation}\label{scary:P}
	\mP = \begin{bmatrix}
	\frac{1}{\eta}\mI & 0 &0\\
	0&\frac{1}{\theta}\mI&0\\
	0&0&\frac{1}{\lambda}\mWp
	\end{bmatrix}.
	\end{equation}
	Let $\rho$ be defined by
	\begin{equation}\label{scary:rho}
		 \rho = \frac{1}{18}\sqrt{\frac{\mu \lambda_{\min}^+(\mW)}{L \lambda_{\max}(\mW)}}.
	\end{equation}
	Let $\Psi^k$ be the following Lyapunov function:
	\begin{equation}\label{scary:psi}
		\Psi^k = (1+\rho)\sqN{\vect{x^k - x^*\\y^k - y^*\\z^k - z^*}}_\mP
		+
		\frac{(2-\tau)}{\tau}\bg_r(x_f^k,x^*)
		+
		\frac{2}{\sigma}\bg_h((y_f^k,z_f^k),(y^*,z^*)).
	\end{equation}
	Then the following inequality holds:
	\begin{equation}
		\Psi^{k+1} \leq \left(1-\frac{1}{1+\rho^{-1}}\right)\Psi^k.
	\end{equation}
\end{theorem}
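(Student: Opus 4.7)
\medskip

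The plan is to combine Lemma~\ref{scary:lem:x} and Lemma~\ref{scary:lem:yz} by simple addition. The key observation is that the two cross terms $+2\<y^{k+1}-y^*,x^{k+1}-x^*>$ (appearing in inequality \eqref{scary:x}) and $-2\<x^{k+1}-x^*,y^{k+1}-y^*>$ (appearing in inequality \eqref{scary:yz}) cancel identically. Moreover, the block-diagonal matrix $\mP$ defined in \eqref{scary:P} decomposes as $\frac{1}{\eta}\mI$ on the $x$-block and $\mM$ (from \eqref{scary:M}) on the $(y,z)$-block, so summing \eqref{scary:x} and \eqref{scary:yz} yields directly a bound on $\sqN{\cdot}_{\mP,k+1}$.

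The first concrete step is to verify that the specific parameter choices meet all the hypotheses of the two lemmas: $\alpha = \mu/2$ satisfies \eqref{scary:alpha}; $\eta = 1/(2\sqrt{\mu L}) = 1/(4\tau L)$ with $\tau = \tfrac12\sqrt{\mu/L}$ gives equality in \eqref{scary:eta} and hence $\delta = \sqrt{\mu/L}$ in \eqref{scary:delta}; the product of $\theta, \sigma$ and $(4/\mu + \nu)$ is bounded as required in \eqref{scary:theta}; similarly for $\lambda$ in \eqref{scary:lambda}; $\beta = 1/(80L)$ meets $\beta \leq \min\{1/\mu,\nu/3\}$ with $\nu = 3/(80L)$ giving equality in the second bound; and $\gamma = \lambda_{\min}^+(\mW)/(80L)$ satisfies \eqref{scary:gamma}. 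The second step is to add the two inequalities and rewrite the Bregman‐shift coefficients using the identities $\frac{2(1-\tau)}{\tau} = \frac{2-\tau}{\tau}-1$ and $\frac{2(1-\sigma)}{\sigma}=\frac{2}{\sigma}-2$, which aligns the Bregman terms with those appearing in the Lyapunov function $\Psi^k$ defined in \eqref{scary:psi}.

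At this point one has a descent inequality of the schematic form
\begin{equation*}
\sqN{\cdot}_{\mP,k+1} + \tfrac{2-\tau}{\tau}\bg_r^{k+1}+\tfrac{2}{\sigma}\bg_h^{k+1}
\leq \sqN{\cdot}_{\mP,k} + \tfrac{2-\tau}{\tau}\bg_r^{k}+\tfrac{2}{\sigma}\bg_h^{k} - \bg_r^k - 2\bg_h^k - D,
\end{equation*}
where
\begin{equation*}
D = \tfrac{3\alpha}{4}\sqn{x^{k+1}-x^*} + \left(\tfrac{\eta\delta}{4}-(2\nu-\beta)\right)\sqn{y^{k+1}-y^*}+\gamma\sqnw{z^{k+1}-z^*}.
\end{equation*}
The last step is to promote this descent into a linear contraction with rate $1/(1+\rho)$. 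Using $\Psi^k = (1+\rho)\sqN{\cdot}_{\mP,k} + \tfrac{2-\tau}{\tau}\bg_r^k + \tfrac{2}{\sigma}\bg_h^k$, one expands $(1+\rho)\Psi^{k+1}-\Psi^k$ and plugs in the above bound. The coefficients of $\bg_r^{k+1}$ and $\bg_h^{k+1}$ cancel perfectly into $-\rho\tfrac{2-\tau}{\tau}\bg_r^{k+1}$ and $-\tfrac{2\rho}{\sigma}\bg_h^{k+1}$, exactly matching $-\rho\Psi^{k+1}$. The coefficients of $\bg_r^k$ and $\bg_h^k$ become $-1 + \tfrac{2\rho(1-\tau)}{\tau}$ and $-2+\tfrac{2\rho(1-\sigma)}{\sigma}$, both of which are non-positive for the chosen $\rho = \sigma$, since $\rho/\tau = O(\sqrt{\lambda_{\min}^+/\lambda_{\max}})$ is much smaller than $1/(1-\tau)$. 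Thus it suffices to show $D \geq \rho \sqN{\cdot}_{\mP,k+1}$, which reduces to three separate scalar inequalities: $\tfrac{3\alpha}{4} \geq \rho/\eta$, $\tfrac{\eta\delta}{4} - (2\nu - \beta) \geq \rho/\theta$, and $\gamma \geq \rho/\lambda$. Each of these holds by direct substitution of the parameter values.

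The main delicate point will be the $y$-block inequality $\tfrac{\eta\delta}{4}-(2\nu - \beta) \geq \rho/\theta$. Here the positive coefficient $\tfrac{\eta\delta}{4} = \tfrac{1}{8L}$ coming from Lemma~\ref{scary:lem:x} (via smoothness of $r$) must absorb the adversarial surplus $2\nu-\beta = \tfrac{5}{80L}$ coming from the non-monotone auxiliary term $\nu y$ inside the operator $A$, while still leaving enough slack to dominate $\rho/\theta = \tfrac{5\lambda_{\min}^+}{324L\lambda_{\max}}$. The precise choices $\nu = 3/(80L)$, $\beta = 1/(80L)$, $\theta = 18\sqrt{\mu L\lambda_{\max}}/(5\sqrt{\lambda_{\min}^+})$ are exactly calibrated so that this slack is positive; everything else is bookkeeping around this single tight inequality.
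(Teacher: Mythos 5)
Your proposal is correct and follows essentially the same route as the paper's proof: add inequalities \eqref{scary:x} and \eqref{scary:yz} so the cross terms $\pm 2\<x^{k+1}-x^*,y^{k+1}-y^*>$ cancel, absorb the residual quadratic terms into $\rho\sqN{\cdot}_{\mP}$ via the three scalar inequalities $\tfrac{3\alpha}{4}\geq\rho/\eta$, $\tfrac{\eta\delta}{4}+\beta-2\nu\geq\rho/\theta$, $\gamma\geq\rho/\lambda$ (the paper phrases this as $\min\{\tfrac{3\eta\mu}{8},\tfrac{\theta}{16L},\tfrac{\lambda\lambda_{\min}^+(\mW)}{80L}\}\geq\rho$), and check that the Bregman coefficients contract. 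Your bookkeeping via $(1+\rho)\Psi^{k+1}-\Psi^k\leq 0$ is an equivalent rearrangement of the paper's chain $\Psi^{k+1}\leq\sqN{\cdot}_{\mP,k}+(1-\rho)[\cdots]\leq(1+\rho)^{-1}\Psi^k$, and your numerical verifications (e.g.\ $\tfrac{1}{16L}\geq\tfrac{5\lambda_{\min}^+(\mW)}{324L\lambda_{\max}(\mW)}$ and $\tfrac{1}{80}\geq\tfrac{1}{81}$) match the paper's.
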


\begin{proof}
	One can observe that conditions of lemma~\ref{scary:lem:x} and lemma~\ref{scary:lem:yz} are satisfied. Hence we can combine \eqref{scary:x} and \eqref{scary:yz} and get
	\begin{align*}
	\sqN{\vect{x^{k+1} - x^*\\y^{k+1} - y^*\\z^{k+1} - z^*}}_\mP
	&=
	\frac{1}{\eta}\sqn{x^{k+1} - y^*}
	+
	\sqN{\vect{y^{k+1} - y^*\\z^{k+1}-z^*}}_\mM
	\\&\leq
	\frac{1}{\eta}\sqn{x^k - x^*}
	-
	\frac{3\alpha}{4}\sqn{x^{k+1} - x^*}
	+
	\frac{2(1-\tau)}{\tau}\bg_r(x_f^k,x^*)
	\\&-
	\frac{2-\tau}{\tau}\bg_r(x_f^{k+1},x^*)
	-
	\frac{\eta\delta}{4}\sqn{y^{k+1} - y^*}
	+
	2\< y^{k+1} - y^*, x^{k+1} - x^*>
	\\&+
	\sqN{\vect{y^{k} - y^*\\z^{k}-z^*}}_{\mM}
	-
	(\beta - 2\nu)\sqn{y^{k+1} - y^*}
	-
	\gamma\sqnw{z^{k+1} - z^*}
	\\&+
	\frac{2(1-\sigma)}{\sigma}
	\bg_h((y_f^k,z_f^k),(y^*,z^*))
	-
	\frac{2}{\sigma}
	\bg_h((y_f^{k+1},z_f^{k+1}),(y^*,z^*))
	\\&-
	2\<x^{k+1} - x^*,y^{k+1} - y^*>
	\\&=
	\sqN{\vect{x^k - x^*\\y^k - y^*\\z^k - z^*}}_\mP
	-
	\frac{3\alpha}{4}\sqn{x^{k+1} - x^*}
	-
	\left(\frac{\eta\delta}{4} + \beta - 2\nu\right)\sqn{y^{k+1} - y^*}
	\\&-
	\gamma\sqnw{z^{k+1} - z^*}
	+
	\frac{2(1-\tau)}{\tau}\bg_r(x_f^k,x^*)
	-
	\frac{2-\tau}{\tau}\bg_r(x_f^{k+1},x^*)
	\\&+
	\frac{2(1-\sigma)}{\sigma}
	\bg_h((y_f^k,z_f^k),(y^*,z^*))
	-
	\frac{2}{\sigma}
	\bg_h((y_f^{k+1},z_f^{k+1}),(y^*,z^*)),
	\end{align*}	
	where $\mP\in \R^{3nd\times 3nd}$ is a matrix defined by \eqref{scary:P}.
	From \eqref{scary:delta} it follows that
	\begin{align*}
	\frac{\eta\delta}{4} = \min \left\{\frac{1}{8\sqrt{\mu L}}, \frac{1}{8 L}\right\} = \frac{1}{8L},
	\end{align*}
	and hence, using choice of  $\alpha$, $\beta$ and $\nu$, we get
	\begin{align*}
	\sqN{\vect{x^{k+1} - x^*\\y^{k+1} - y^*\\z^{k+1} - z^*}}_\mP
	&\leq
	\sqN{\vect{x^k - x^*\\y^k - y^*\\z^k - z^*}}_\mP
	-
	\frac{3\mu}{8}\sqn{x^{k+1} - x^*}
	-
	\left(\frac{1}{8L}+ \frac{1}{80L} - \frac{6}{80L}\right)\sqn{y^{k+1} - y^*}
	\\&-
	\frac{\lambda_{\min}^+(\mW)}{80 L}\sqnw{z^{k+1} - z^*}
	+
	\frac{2(1-\tau)}{\tau}\bg_r(x_f^k,x^*)
	-
	\frac{2-\tau}{\tau}\bg_r(x_f^{k+1},x^*)
	\\&+
	\frac{2(1-\sigma)}{\sigma}
	\bg_h((y_f^k,z_f^k),(y^*,z^*))
	-
	\frac{2}{\sigma}
	\bg_h((y_f^{k+1},z_f^{k+1}),(y^*,z^*))
	\\&\leq
	\sqN{\vect{x^k - x^*\\y^k - y^*\\z^k - z^*}}_\mP
	-
	\min\left\{\frac{3\eta\mu}{8}, \frac{\theta}{16 L}, \frac{\lambda\cdot\lambda_{\min}^+(\mW)}{80 L}\right\}
	\sqN{\vect{x^{k+1} - x^*\\y^{k+1} - y^*\\z^{k+1} - z^*}}_\mP
	\\&+
	\left(1 - \frac{\tau}{2}\right)\frac{(2-\tau)}{\tau}\bg_r(x_f^k,x^*)
	-
	\frac{2-\tau}{\tau}\bg_r(x_f^{k+1},x^*)
	\\&+
	(1-\sigma)\frac{2}{\sigma}
	\bg_h((y_f^k,z_f^k),(y^*,z^*))
	-
	\frac{2}{\sigma}
	\bg_h((y_f^{k+1},z_f^{k+1}),(y^*,z^*))
	\\&=
	\sqN{\vect{x^k - x^*\\y^k - y^*\\z^k - z^*}}_\mP
	-
	\min\left\{
	\frac{3}{16}\sqrt{\frac{\mu}{L}},
	\frac{9\sqrt{\mu  \lambda_{\max}(\mW)}}{40\sqrt{L \lambda_{\min}^+(\mW)}},
	\frac{9 \sqrt{\mu \lambda_{\min}^+(\mW)}}{160 \sqrt{L \lambda_{\max}(\mW)}}\right\}
	\sqN{\vect{x^{k+1} - x^*\\y^{k+1} - y^*\\z^{k+1} - z^*}}_\mP
	\\&+
	\left(1 - \frac{1}{4}\sqrt{\frac{\mu}{L}}\right)\frac{(2-\tau)}{\tau}\bg_r(x_f^k,x^*)
	-
	\frac{2-\tau}{\tau}\bg_r(x_f^{k+1},x^*)
	\\&+
	\left(1-\frac{1}{18}\sqrt{\frac{\mu \lambda_{\min}^+(\mW)}{L \lambda_{\max}(\mW)}}\right)\frac{2}{\sigma}
	\bg_h((y_f^k,z_f^k),(y^*,z^*))
	-
	\frac{2}{\sigma}
	\bg_h((y_f^{k+1},z_f^{k+1}),(y^*,z^*))
	\\&\leq
	\sqN{\vect{x^k - x^*\\y^k - y^*\\z^k - z^*}}_\mP
	-
	\rho
	\sqN{\vect{x^{k+1} - x^*\\y^{k+1} - y^*\\z^{k+1} - z^*}}_\mP
	\\&+
	(1-\rho)\frac{(2-\tau)}{\tau}\bg_r(x_f^k,x^*)
	-
	\frac{2-\tau}{\tau}\bg_r(x_f^{k+1},x^*)
	\\&+
	(1-\rho)\frac{2}{\sigma}
	\bg_h((y_f^k,z_f^k),(y^*,z^*))
	-
	\frac{2}{\sigma}
	\bg_h((y_f^{k+1},z_f^{k+1}),(y^*,z^*)).
	\end{align*}
	After rearranging and using definition of $\Psi^k$ \eqref{scary:psi} we get
	\begin{align*}
		\Psi^{k+1}
		&\leq
		\sqN{\vect{x^k - x^*\\y^k - y^*\\z^k - z^*}}_\mP
		+
		(1-\rho)\frac{(2-\tau)}{\tau}\bg_r(x_f^k,x^*)
		\\&+
		(1-\rho)\frac{2}{\sigma}
		\bg_h((y_f^k,z_f^k),(y^*,z^*))
		\\&\leq
		\left(1-\frac{1}{1+\rho^{-1}}\right)\Psi^k.
	\end{align*}
\end{proof}

\end{document}